\makeatletter \@addtoreset{equation}{section} \makeatother
\renewcommand\thetable{\thesection.\@arabic\c@table}
\theoremstyle{plain}
\newtheorem{maintheorem}{Theorem}
\newtheorem{theorem}{Theorem }[section]
\newtheorem{proposition}[theorem]{Proposition}
\newtheorem{lemma}[theorem]{Lemma}
\newtheorem{corollary}[theorem]{Corollary}
\theoremstyle{definition} \theoremstyle{remark}
\newtheorem{remark}[theorem]{Remark}
\newtheorem{example}[theorem]{Example}
\newtheorem{definition}[theorem]{Definition}
\newcommand{\field}[1]{\mathbb{#1}}
\newcommand{\real}{\field{R}}
\renewcommand{\natural}{\field{N}}
\newcommand{\al} {\alpha}       
\newcommand{\ga} {\gamma}
\newcommand{\la} {\lambda}      \newcommand{\La}{\Lambda}
\newcommand{\diam}{\operatorname{diam}}
\newcommand{\ov}{\overline}
\renewcommand{\field}[1]{\mathbb{#1}}
\newcommand{\re}{\field{R}}
\renewcommand{\natural}{\field{N}}
\newcommand{\cC}{\mathcal{C}}
\newcommand{\cR}{\mathcal{R}}
\newcommand{\cF}{\mathcal{F}}
\newcommand{\cU}{\mathcal{U}}
\newcommand{\SU}{\mathcal{U}}
\newcommand{\ds}{\displaystyle}
\newcommand{\f}{\ds\frac}
\newcommand{\N}{\mathbb{N}}
\newcommand{\Z}{\mathbb{Z}}
\newcommand{\E}{\mathbb{E}}
\newcommand{\R}{\mathbb{R}}
\renewcommand{\L}{\mathcal{L}}
\newcommand{\Lt}{\tilde{\L}}
\newcommand{\A}{\mathcal{A}}
\newcommand{\D}{\mathcal{D}}
\newcommand{\F}{\mathcal{F}}
\renewcommand{\P}{\mathcal{P}}
\newcommand{\G}{\mathcal{G}}
\newcommand{\gm}{\gamma}
\newcommand{\Gm}{\Gamma}
\newcommand{\fhi}{\varphi}
\newcommand{\limn}{\ds\lim_{n \to \infty}}
\newcommand{\seta}{\rightarrow}
\renewcommand{\l}{\left}
\renewcommand{\r}{\right}
\newcommand{\norma}[1]{\| #1 \|}
\newcommand{\modulo}[1]{\l| #1 \r|}
\newcommand{\holder}[1]{\l| #1 \r|_\alpha}
\newcommand{\fecho}[1]{\overline{#1}}
\theoremstyle{plain}
\newtheorem*{teoremaB}{Theorem B}
\newtheorem*{teoremaC}{Theorem C}
\newtheorem*{theorem*}{Theorem}
\renewcommand{\gm}{\gamma}
\newcommand{\gmj}{\gm_j}
\newcommand{\gmt}{\widetilde{\gm}}
\newcommand{\gmtj}{\widetilde{\gm}_j}
\newcommand{\gmc}{\hat{\gm}}
\newcommand{\gmcj}{\hat{\gm}_j}
\newcommand{\mg}{\mu_{\gm}}
\newcommand{\mgj}{\mu_{\gm_j}}
\newcommand{\rj}{\rho_j}
\newcommand{\rl}{\rho^{,}}
\newcommand{\rlj}{\rho^{,}_j}
\newcommand{\rll}{\rho^{,,}}
\newcommand{\rllj}{\rho^{,,}_j}
\newcommand{\rbj}{\overline{\rho}_j}
\newcommand{\rbbj}{\overline{\overline{\rho}}_j}
\newcommand{\rt}{\widetilde{\rho}}
\newcommand{\rtj}{(\widetilde{\rho})_j}
\newcommand{\rc}{\hat{\rho}}
\newcommand{\rcj}{(\hat{\rho})_j}
\newcommand{\dn}{d\nu}
\newcommand{\dm}{d\mu}
\newcommand{\dmg}{d\mu_{\gm}}
\newcommand{\dmgj}{d\mu_{\gm_{j}}}
\newcommand{\dmgt}{d\mu_{\gmt}}
\newcommand{\dmgtj}{d\mu_{\gmt_{j}}}
\newcommand{\dmgc}{d\mu_{\gmc}}
\newcommand{\dmgcj}{d\mu_{\gmcj}}
\newcommand{\infimo}[2]{\ds\inf_{#1}\l\{ #2 \r\}}
\newcommand{\intg}[1]{\ds\int_{\gm} #1 \dmg}
\newcommand{\intgj}[1]{\ds\int_{\gmj} #1 \dmgj}
\newcommand{\intgt}[1]{\ds\int_{\gmt} #1 \dmgt}
\newcommand{\intgc}[1]{\ds\int_{\gmc} #1 \dmgc}
\newcommand{\intgcj}[1]{\ds\int_{\gmcj} #1 \dmgcj}
\newcommand{\Dnorm}[1]{\mathcal{D}_1(#1)}
\newcommand{\sumjap}{\ds\sum_{j=1}^p}
\newcommand{\sumjapg}{\ds\sum_{j=1}^{p_{\gm}}}
\begin{document}

\title{Statistical properties  of the maximal entropy measure for  partially hyperbolic attractors}

\author{Armando Castro and  Te\'ofilo Nascimento}

\address{Armando Castro, Departamento de Matem\'atica, Universidade Federal da Bahia\\
Av. Ademar de Barros s/n, 40170-110, Salvador-Ba, Brazil.}
\email{armando@impa.br}

\address{Te\'ofilo Nascimento, Departamento de Ci\^encias Exatas e da Terra - Campus II, Universidade do Estado da Bahia\\
Br 110, km 03, 48.040 -210, Alagoinhas-Ba, Brazil.}
\email{atnascimento@uneb.br}

\date{\today}

\begin{abstract}
We show the existence and uniqueness of the maximal entropy probability measure 
for partially hyperbolic diffeomorphisms which are semi-conjugate to 
nonuniformly expanding maps.
Using the theory of projective metric on cones we then
prove exponential decay of correlations for H\"older continuous observables and
the central limit theorem for the maximal entropy probability measure. 
Moreover, for systems derived from solenoid we also prove
the statistical stability for the maximal entropy probability measure.  
Finally, we use such techniques to obtain similar results
in a context containing partially hyperbolic systems derived from Anosov.
\end{abstract}

\subjclass[2000]{37A35, 37C30, 37C40, 37D25, 60F}
\keywords{Thermodynamic Formalism, Partially Hyperbolic Systems, Transfer Operator.}

\maketitle
%
%

%

%
\section{Introduction}
The thermodynamical formalism from the statistical mechanics was introduced in Dynamical Systems by the former works  of Sinai, Ruelle and Bowen for uniformly hyperbolic maps and  H\"older potentials, in the beginning of the 70's.  Beyond the  uniformly hyperbolic context,  the theory is still quite incomplete. Several contribution do exist, for example \cite{BK98, BF09, Yur03, OV08, SV09, BF09, Sar99, Cas02, VV10, LM13, CV13, MT14}.

In the recent years, the thermodynamical formalism of a class of partial hyperbolic diffeomorphisms 
introduced by Alves, Bonatti, Viana \cite{ABV00} and Castro~\cite{Cas98} has been developed under some conditions
that resemble or may lead to some mostly expanding or mostly contracting assumption in the central direction. 

In the non-invertible setting this has been studied by Castro, Oliveira, Varandas and Viana 
\cite{OV08,VV10, CV13}.  Given 
a compact metric space $M$ and a local homeomorphism  $f:M \to M$ in  with Lipschitz inverse branches
that admit some expanding and some possibly contracting domains of invertibility it was proved in
 \cite{VV10} that for every H\"older continuous potential  $\phi$ satisfying a small variation condition there are finitely many ergodic equilibrium states for $f$ with respect to $\phi$. Furthermore,  the equilibrium states are absolutely continuous with respect to some conformal measure and there exists a unique equilibrium state provided that the dynamical system is topologically exact.
Later on, using a functional analytic approach by means of projective metrics techniques to the study of the
spectral properties of Ruelle-Perron-Frobenius operators on the space of $C^{r+\alpha}$ observables 
$(r\in \mathbb N, \alpha>0)$, Castro and Varandas ~\cite{CV13} presented a more general proof for the 
uniqueness of equilibrium states for this class of maps and deduced many statistical properties  as 
exponential decay of correlations, Central Limit Theorem, and also both statistical and spectral stabilities.

In this paper our motivation is to contribute to the study of the thermodynamical formalism of 
a large class of partially hyperbolic  diffeomorphisms with strong stable foliation. We  are interested in  two
different settings. The first setting consists in  partially hyperbolic 
diffeomorphisms which are semiconjugate to the class of local diffeomorphisms in \cite{CV13}. 
The second setting consists in a class of partially hyperbolic attractors exhibiting a  Markov
partition (whose iterates need not have diameters going to zero).
These settings
include many examples of partially hyperbolic diffeomorphisms that arise as local bifurcations of 
Axiom A diffeomorphisms and will be mostly expanding with respect to some conformal measure,
including a robust (open) class of systems derived from Anosov, introduced by Ma\~n\'e in \cite{Mane78}.
 
SRB measures for large classes of partially hyperbolic diffeomorphisms have been constructed 
by \cite{Car93, ABV00, BV00, Cas98} and existence and uniqueness of 
maximal entropy 
measures have been proved by Buzzi, Fisher, Sambarino, Vasquez \cite{BFSV12} for derived from Anosov 
diffeomorphisms, by Buzzi, Fisher \cite{BF13} 
for wide class of deformations of Anosov diffeomorphisms that include some examples by Bonatti and Viana of
robustly transitive non-partiallly hyperbolic diffeomorphisms, and
 by Ures \cite{Ur12} for partially hyperbolic diffeomorphisms of $\mathbb T^3$ homotopic to a 
hyperbolic automorphism. In most of these cases the approach is to establish a semiconjugacy between the
dynamical system and some uniformly hyperbolic one and prove that the points that remain in a non-hyperbolic
region do not contribute much for the topological entropy. More recently, Climenhaga, Fisher and 
Thompson \cite{CFT15} proved the uniqueness for equilibrium states  for some robust classes of
examples of \cite{Mane78} and \cite{BV00}. The drawback is that these methods are not enough to
deduce some good statistical properties for the original dynamical system, specially the exponential decay
of correlations. 
To illustrate this fact let us mention that in the case of nonuniformly expanding maps the Ruelle-Perron-Frobenius
transfer operator acts in the space of H\"older continuous functions and the dominant eigenvector of its adjoint operator leads to the measure of maximal entropy, while in the invertible  context any invariant measure is an
eigenvector for the adjoint operator. For that reason the method of invariant cones used in \cite{CV13} could not be applied here.
In fact, the results and their proofs in this paper here are independent from those in the above mentioned paper, 
except that we use the existence and uniqueness of the entropy maximizing measure there
to guarantee the uniqueness in this new context.  

So, to deduce exponential decay of correlations for the original dynamical systems we introduce a suitable
Banach space and prove that the transfer operator does preserve some cone of functions. The construction
of such cone of functions is done by constructing a family of probability measures on stable leaves that
is equidistributed and holonomy invariant. A very laborious  work is done in order to prove the invariance of such suitable 
cone of functions by the transfer operator and that the image of this by the transfer  operator has finite diameter 
in the  projective metrics, which implies that transfer operator is a contraction with respect to the projective metrics. 
From that and the duality properties of transfer and Koopman operators we derive the 
exponential decay of correlations and the Central Limit Theorem as a consequence.

It is worth to mention other recent works (e.g. \cite{BL12, Mel14, LT15})   concerning fast mixing of SRB and Gibbs measures, in some nonuniformly hyperbolic settings.  The techniques  used in such papers are either compactness arguments provided by Lasota-Yorke estimates, or
Young Towers \cite{You98} and operator renewal theory. Even 
though such techniques have the advantage to reach a kind of spectral gap for transfer operator rather directly, they 
need asymptotic assumptions, and stronger  transitivity assumptions than the Cone approach. 
For instance,  the nontransitive situation that we obtain by bifurcating the Manneville-Poumeau map 
so that we create a sink can not be properly worked out by a Lasota-Yorke approach. 
However, the method of invariant cones for transfer operators, used e.g. in \cite{CV13},
easily contemplates such example without 
any addititional hipothesis. 
The approach here also gathers the same advantage of a kind of mild transitivity assumptions such as in \cite{CV13}. 
So, our paper deals with different and robust classes of examples that are not under the 
hypotheses of the previous cited works.

This paper is organized as follows. 
In the next section, we give precise definitions of the family of partially hyperbolic diffeomorphisms that we consider
and state the main results. Some robust class of examples is also discussed.
In  sections 3 and 4,  we establish the existence and  uniqueness  of equlibrium states.
and, restricting to the  skew-products and derived from solenoid case, in section 5,
we also prove statistical stability of the equilibrium states, meaning that
the measure varies continuously in the weak$^*$ topology with the dynamics and the potential. 
In the remaining sections up to section 10,  we prove that  the  maximal entropy measure satisfies good statistical properties,
namely exponential decay of correlations and the Central Limit Theorem in the space of H\"older continuous observables. 
In the last section \ref{secanosov}, we apply the methods that we developed for the case of partially hyperbolic attractors with Markov partition,
including some robust classes of attractors derived from Anosov introduced by Ma\~n\'e \cite{Mane78}.

\section{Context and statement of the main results}
\label{contexto}

In this paper, we will work with two contexts of partially hyperbolic diffeomorphisms with strong
stable direction.
We deal with  
 partially hyperbolic systems that are semiconjugate to
  nonuniformly expanding endormorphisms (see \cite{CV13}) and 
with diffeomorphisms that include systems derived from Anosov.
Although both classes of dynamical systems presents a partially hyperbolic behaviour,
the study of their thermodynamical properties require different approaches due 
to crucial geometrical differences.

\vspace{0.3cm}
{\bf  First Setting.} 
Let $N$ be a connected compact Riemannian manifold, and let $g:N \to N$ be a \emph{local homeomorphism} with Lipschitz 
inverse branches. 
For that, we mean there  exists $L(x) \geq 0$ such that,  for all  $x\in N$ has a neighborhood 
$U_x \ni x$ such that $g_x:= g|_{U_x} : U_x \to g(U_x)$ is invertible
and 
\begin{equation}\label{CV}
d(g_x^{-1}(y),g_x^{-1}(z))
    \leq L(x) \;d(y,z), \quad \forall y,z\in g(U_x).
\end{equation}
Let us denote by $\deg(g)$ the degree of  $g$, which coincides with the number of preimages of any $x \in N$ by $g$.
We also assume that 
there exist $0<\lambda_u<1$ and an open region $\Omega \subset N$
such that \vspace{.1cm}
\begin{itemize}
\item[(H1)] $L(x)\leq L$ for  $x \in \Omega$ and
$L(x)< \lambda_u $ for $x\notin \Omega$, for some $L$ close to $1$.
\item[(H2)] There exists a covering $\cU$ of $N$ by injective domain of $g$, 
such that $\Omega$ can be covered by $q<\deg(g)$ elements of $\cU$. \vspace{.1cm}
\end{itemize}

Let $M$ be a compact invariant manifold,  and $f:M \to M$ a diffeomorphism onto its image.  
Suppose there exists a continuous and sujective  $\Pi: M \to N$  such that 
\begin{equation} \label{contextoprincipal}
\Pi \circ f = g \circ \Pi.
\end{equation}
Given $y \in N$, set $M_y= \Pi^{-1}(y)$.  Therefore, $M=\ds\bigcup_{y\in N}M_y$. Note that  $f(M_y) \subset M_{g(y)}$,
and also suppose that there exists $0 <\lambda_s <1$ such that
\begin{equation}
d(f(z),f(w)) \leq \lambda_s d(z,w)
\end{equation}
for all $z,w \in M_y$. 

As the maximizing entropy measure is  $f$-invariant, by  Poincar\'e's Recurrence Theorem such measure is
 supported in the attractor
$$
\Lambda:=\bigcap_{n=0}^{\infty}f^n(M).
$$
Note that $\Lambda$ is compact and invariant by $f$. So, it is sufficient to study the  dynamics of $f$ restricted to $\Lambda$. 

Given $x, y\in M$, write $\hat{x}:= \Pi(x)$, $\hat{y}:= \Pi(y)$.
We assume that there exist holonomies $\pi_{\hat{x},\hat{y}}: M_{\hat{x}}\cap \Lambda \to M_{\hat{y}}\cap \Lambda$ satisfying 
\begin{equation}
\f{1}{C}\l[d_N(\hat{x},\hat{y})+d_M(\pi_{\hat{x},\hat{y}}(x),y)\r] \leq d_M(x,y) \leq C\l[d_N(\hat{x},\hat{y})+d_M(\pi_{\hat{x},\hat{y}}(x),y)\r]
\end{equation}
for some constant $C>0$, and $d_M$, $d_N$ to be the metrics of $M$,$N$, respectively. For simplicity we shall write $d$ for any of such metrics.

We suppose such  holonomies are invariant  by $f$, that is, 
\begin{equation}
f\l(\pi_{\hat{x},\hat{y}}(z)\r)=\pi_{g(\hat{x}),g(\hat{y})}\l(f(z)\r) \label{fimcontexto}
\end{equation}
for all $z \in M_{\hat{x}}\cap \Lambda$.

\vspace{0.3cm}
{\bf Second Setting.}
Let $M$ be a compact Riemannian manifold and  $f:M \seta M$ be a $C^{1+}$diffeomorphism. 
Assume  that there exists 
a compact subset $\Lambda$ of  $M$ with the following properties:
\begin{enumerate}

	\item  There exists an open $f-$invariant neighborhood $Q$ of $\Lambda$, such that $f(\fecho{Q}) \subset Q$ and 
		$$
		\Lambda=\bigcap_{n=0}^{\infty}f^n(Q).
		$$
	\item  $\Lambda$ is partially hyperbolic, in the sense that there  exists a $Df$-invariant dominated splitting
		$$
			T_{\Lambda}M=E^{ss}\oplus E^{uc}, dim(E^{ss})>0
		$$
		of the tangent bundle restricted to $\Lambda$, such that, once fixed a Riemannian metrics in $M$ we have:

	\begin{enumerate}
		
		\item $E^{ss}$ contracts uniformly: 	
				$\norma{Df^n|E^{ss}_x} \leq C\lambda_s^n$
		
		\item $E^{uc}$ is dominated by $E^{ss}$:
				$\norma{Df^n|E^{ss}_x}\norma{Df^{-n}|E^{uc}_{f^n(x)}} \leq C \lambda_s^n$ 
				
	\end{enumerate}
	
	for all  $n \geq 1$ and $x \in \Lambda$,	with $0<\lambda_s<1$.
	
	\item There exists an $f$-invariant  center-unstable foliation  $\F^{uc}_{loc}$ of a neighborhood $\Lambda$,  which is tangent to
	the  center unstable subbundle $E^{uc}$ in $\Lambda$.  There is also an $f-$invariant  
	stable foliation $\F^s_{loc}$ tangent 
	to the stable  subbundle $E^{ss}$ in $\Lambda$.
	
	In order to proceed with our considerations on  the dynamics $f$, we recall the concept 
	of Markov Partition in this partially hyperbolic context. 

\begin{definition}
	We say that $R \subset \Lambda$ is a  Markov \textbf{proper rectangle}, if forall  $x$ and $y$ in $R$ there exists a unique
	 point $z:=[x,y] \in  R$ which is the intersection between the local (strong) stable manifold passing by  $x$, and the
	 local center unstable manifold passing by $y$. Moreover,
	$R$ is the closure of its interior (in the  relative topology of $\Lambda$) and, in particular, is closed.
\end{definition}

We observe that  the  boundary of Markov proper rectangles are  union  of local (strong) stable  manifolds
and local center-unstable manifolds. 

\begin{definition}
	A collection $\cR=\l\{R_1, \cdots,R_p\r\}$ of proper rectangles is a  {\em Markov Partition} for $f$ restricted to $\Lambda$, if:
	\begin{enumerate}
		\item $\ds\Lambda=\bigcup_{i=1}^p R_i$;
		\item $int\l(R_i\r)\cap int\l(R_j\r)=\emptyset$ for $i \neq j$;
		\item If $\gm$ is the  intersection of  a local (strong) stable  manifold with  $R_i$ and 
		 $f(\gm) \cap R_j \neq \emptyset$ then $f(\gm) \subset R_j$.
		Analogously, if $\Gm$ is the  intersection of a  local center unstable manifold with $R_i$ and 
		$f^{-1}(\Gm) \cap R_k \neq \emptyset$ then $f^{-1}(\Gm) 		\subset R_k$.
	\end{enumerate}
	
\end{definition}
	
	\item  $f$ restricted to $\Lambda$ admits a Markov partition $\cR=\l\{R_1,\cdots,R_p\r\}$, $p \geq 2$ with the (mild) mixing property: 
	given $i,j \in \l\{1, \cdots, p\r\}$, there exists $n_0 \geq 1$ such that
	
		$$ 
		    f^n(R_i) \cap R_j \neq \emptyset, \forall n \geq n_0.
		$$
	
	We distinguish two kinds of   rectangles in $\cR$ according  to its behavior in the  direction $E^{uc}$. Fixed $0<\zeta < 1$,  
	we say $R_i \in \cR$	is a {\em good rectangle} if 
		$$
			\norma{Df|_{E^{uc}_x}}^{-1}\leq \zeta
		$$
	for all $x \in R_i$. That is, $E^{uc}$ expands uniformly in $R_i$, for one iterate. The other rectangles will be 
	called {bad rectangles}.
	
		\item There  exists at least one good rectangle and for all  $x$ in a bad  rectangle
			$$
				\norma{Df|_{E^{uc}_x}}^{-1}\leq L
			$$
	for some $L \geq 1$ close to $1$ (depending on $\zeta$ and the combinatorics of the partition). 
\end{enumerate}

\subsection{Statement of the main results}

We recall  the definition of topological entropy due to  Bowen, using $(n,\epsilon)$-separable sets. A  compact set $K$ contained in a metric space $(X,d)$ is $(n,\epsilon)$-separable  if 
$$
\forall x,y \in K, x\neq y, \max\l\{d(f^j(x),d(f^j(y)); j=0,\cdots,n-1\r\}>\epsilon
$$
We denote by $S(n,\epsilon,K)$ the greatest cardinality of  a $(n,\epsilon)$-separate subset  of $K$. The {\em relative entropy } of $f$ with respect to a (not necessarily invariant) compact $K \subset X$, is given by 
$$
h(f,K):=\ds\lim_{\epsilon \seta 0}\limsup_{n \seta \infty}\f{1}{n}\log S(n,\epsilon,K). 
$$
For a uniformly continuous map $f: X \seta X$, ($X$ not  necessarily compact), the {\em topological entropy} is defined by
$$
h(f):=\sup\l\{h(f,K); K \text{ compact }\r\}
$$
In our  context $X= \Lambda$ is a compact set, and $f$ is automaticaly uniformly continuous.
We also have by \cite{ W93} that 
$h(f)=h(f,X)$ does not depend on the metrics.

For an invariant measure $\mu$, we also recall the definition by Kolmogorov of its metric entropy $h_\mu(f)$.
Given a probability space $(X,\mathcal{B},\mu)$, if $\mu$ is $f$-invariant, 
we  define the entropy of a finite of a finite partition $\P$ of $X$ by:
$$
h_{\mu}(\P):= -\ds\sum_{P \in \P} \mu(P)\log \mu(P).
$$

Then the entropy of a partition with respect to $f$ is 
$$
h_{\mu}(f,\P):= \lim_{n \seta \infty}\f{1}{n}h_{\mu}(\P \vee f^{-1}(\P) \vee \cdots \vee f^{n-1}(\P)).
$$
and the metric entropy of $f$ with respect to $\mu$ is given by
$$
h_{\mu}(f):=\sup_{\P} \l\{h_{\mu}(f,\P)\r\}.
$$
Denote by  $ \mathcal{M}^1_f(X)$ the set of all $f-$invariant probabilities.
The {\em variational principle} stablishes, that for a continuous map $f$ on a compact metric space $X$, 
the equation
$$
h(f)=\sup\l\{h_{\mu}(f); \mu \in \mathcal{M}^1_f(X) \r\}
$$
holds.
We say that an invariant probability $\mu$ is a {\em maximal entropy measure } for  $f$ if 
$
h(f)=h_{\mu}(f).
$
We now state the main results in this work:
\begin{maintheorem}\label{thm.max.entropy}{\textbf{(Existence and Uniqueness of Maximal Entropy measure.)}}
Let $f:\Lambda \to \Lambda$ a diffeomorphism  in the first setting, as described in section  \ref{contexto} (that is, the conditions
given by equations \ref{contextoprincipal} through \ref{fimcontexto}).
Then, there exists a unique  maximal entropy measure $\mu$ for $f$. 
\end{maintheorem}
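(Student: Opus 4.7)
The idea is to push the problem down to the base via $\Pi$: we show that $h_{\topp}(f|_\Lambda)$ equals $h_{\topp}(g)$ and, more generally, that $h_{\mu}(f) = h_{\Pi_*\mu}(g)$ for every $f$-invariant $\mu$. Since by \cite{CV13} the base map $g$ admits a unique measure of maximal entropy $\nu$ (apply the result with the zero potential, which trivially has small variation), existence and uniqueness on $\Lambda$ reduce to showing $\nu$ has exactly one $f$-invariant lift. For the entropy equality, $g \circ \Pi = \Pi \circ f$ makes $g$ a topological factor, giving $h_{\topp}(f) \geq h_{\topp}(g)$; Bowen's fibered inequality yields the reverse,
$$h_{\topp}(f) \leq h_{\topp}(g) + \sup_{y \in N} h_{\topp}(f, M_y),$$
and the uniform contraction $d(f(z),f(w)) \leq \lambda_s d(z,w)$ on each $M_y$ forces any $(n,\epsilon)$-separated subset of $M_y$ to be already $\epsilon$-separated, of cardinality bounded independently of $n$, so $h_{\topp}(f, M_y) = 0$.

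For existence, choose any Borel probability $\tilde\mu_0$ on $\Lambda$ with $\Pi_*\tilde\mu_0 = \nu$ (via a Borel section of $\Pi$) and take a weak-$*$ accumulation point $\mu$ of $\frac{1}{n}\sum_{k=0}^{n-1} f^k_*\tilde\mu_0$. Then $\mu$ is $f$-invariant with $\Pi_*\mu = \nu$, and the Ledrappier--Walters formula
$$h_{\mu}(f) = h_{\Pi_*\mu}(g) + h_{\mu}\bigl(f \mid \Pi^{-1}\cB(N)\bigr)$$
together with the vanishing of the fiber conditional entropy (again from fiber contraction) gives $h_{\mu}(f) = h_{\nu}(g) = h_{\topp}(f)$. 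For uniqueness, any maximal entropy measure $\mu'$ for $f$ projects to a maximal entropy measure for $g$, so $\Pi_*\mu' = \nu$ by the uniqueness on the base. To see that the lift is unique, fix $\varphi \in C^0(M)$; uniform continuity combined with fiber contraction gives
$$\omega_n := \sup_{\Pi(z) = \Pi(w)} |\varphi(f^n z) - \varphi(f^n w)| \longrightarrow 0.$$
Fix a Borel section $\sigma: N \to \Lambda$; using $f$-invariance of $\mu'$ and $\Pi_*\mu' = \nu$,
$$\Bigl| \int \varphi \, d\mu' - \int \varphi(f^n \sigma(y)) \, d\nu(y) \Bigr| = \Bigl| \int \varphi \circ f^n \, d\mu' - \int \varphi(f^n \sigma(y)) \, d\nu(y) \Bigr| \leq \omega_n,$$
so $\int \varphi \, d\mu'$ is determined by $\nu$ (and the chosen section) alone, whence $\mu' = \mu$.

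The main obstacle is establishing the entropy identity $h_{\mu}(f) = h_{\Pi_*\mu}(g)$ for every $f$-invariant $\mu$, i.e., the vanishing of the fiber conditional entropy. This is where the partial hyperbolicity enters essentially: the uniform contraction of fibers together with the $f$-invariance of the holonomies $\pi_{\hat x, \hat y}$ permits the construction of generating partitions whose $\Pi^{-1}\cB(N)$-conditional refinements have uniformly bounded cardinality under $f$-iteration, so that no entropy is generated transversally to $\Pi$. Once this identity is secured, the reductions above are routine applications of the variational principle and weak-$*$ compactness, and the existence/uniqueness of $\nu$ from \cite{CV13} yields the theorem.
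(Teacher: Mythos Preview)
Your proposal is correct and shares the paper's overall reduction strategy---use the semiconjugacy $\Pi$ to reduce to the base $g$, where \cite{CV13} supplies a unique measure of maximal entropy $\nu$---but the execution differs in each of the three steps.

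For the equality $h_{\topp}(f)=h_{\topp}(g)$ you both use Bowen's fibered inequality and the vanishing of fiber entropy from $\lambda_s$-contraction; this part is identical. For \emph{existence}, the paper builds $\mu$ explicitly: it defines $\mu$ on the increasing algebra $\bigcup_n f^n(\Pi_\Lambda^{-1}\cA_N)$ by pulling back $\nu$, checks these $\sigma$-algebras generate the Borel sets (using fiber contraction to see the partitions $f^n(\Pi_\Lambda^{-1}\cP_n^0)$ have diameters $\to 0$), and then verifies $h_\mu(f)\ge h_\nu(g)$ via Brin--Katok. You instead take any lift of $\nu$, Krylov--Bogolyubov average it, and invoke Ledrappier--Walters; note that for existence you do not even need Ledrappier--Walters, since the trivial factor inequality $h_\mu(f)\ge h_{\Pi_*\mu}(g)=h_\nu(g)$ already suffices once $\Pi_*\mu=\nu$. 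For \emph{uniqueness}, the paper argues that if $\mu_1\neq\mu$ then (using its explicit construction of $\mu$) necessarily $\Pi_*\mu_1\neq\nu$, and then shows $h_{\Pi_*\mu_1}(g)\ge h_{\mu_1}(f)$ by a direct partition computation (again the partitions $f^n(\Pi_\Lambda^{-1}\cP_n^0)$), contradicting uniqueness on the base. You reverse the logic: first establish $h_{\mu'}(f)=h_{\Pi_*\mu'}(g)$ for every invariant $\mu'$ (this is your ``main obstacle,'' and your sketch via fiber-conditional entropy is essentially what the paper's partition argument accomplishes), forcing $\Pi_*\mu'=\nu$; then show the $f$-invariant lift of $\nu$ is unique by the clean asymptotic argument $\int\varphi\,d\mu'=\lim_n\int\varphi(f^n\sigma(y))\,d\nu(y)$.

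What each approach buys: the paper's explicit construction of $\mu$ pays off later---it is reused to exhibit the product decomposition $\mu=\mu_\gamma\times\nu$ (needed for the decay-of-correlations proof) and in the statistical stability argument. Your route is more conceptual and modular, cleanly separating the entropy-identity step from the unique-lift step; in particular your uniqueness-of-lift argument is shorter and does not rely on any particular construction of $\mu$. The holonomies $\pi_{\hat x,\hat y}$ are not really needed for the entropy identity (the paper's partition argument uses only fiber contraction and invariance of $\mu_1$), so you can drop that from your sketch.
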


As a by-product of the proof we also obtain
\begin{corollary}{\textbf{(Statistical Stability in the Derived from Solenoid case.)}}
Let $f_n$ be a sequence of derived from solenoid diffeomorphisms (see  example   \ref{exskew3} ) 
and call $\mu_n$ the maximal entropy probability measure for $f_n$.
If $f_n \to f$ in the $C^1$-topology, then $\mu_n$ converges to the maximal entropy probability measure for $f$ in the 
weak-* topology. 
\end{corollary}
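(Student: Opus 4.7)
The strategy combines the uniqueness part of Theorem~\ref{thm.max.entropy} with a weak-* compactness argument. Since $M$ is compact, the sequence $(\mu_n)$ of probability measures admits weak-* accumulation points, and it suffices to show that every such limit equals the unique maximal entropy measure $\mu$ of $f$. Let $\mu^*$ be the weak-* limit of some subsequence $\mu_{n_k}$. Since $f_n \to f$ uniformly on the compact manifold $M$ (a direct consequence of $C^1$-convergence) and each $\mu_{n_k}$ is $f_{n_k}$-invariant, the standard argument $\int \varphi \circ f \, d\mu^* = \lim \int \varphi \circ f_{n_k} \, d\mu_{n_k} = \lim \int \varphi \, d\mu_{n_k} = \int \varphi \, d\mu^*$ shows that $\mu^*$ is $f$-invariant.

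The next step is to project to the expanding base. In the derived-from-solenoid setting, the semi-conjugacy $\Pi_n: M \to N$ onto a nonuniformly expanding base $g_n: N \to N$ is robust under $C^1$ perturbations: for $f_n$ sufficiently close to $f$, one may arrange $\Pi_n \to \Pi$ uniformly and $g_n \to g$ in $C^0$, with $\deg(g_n) = \deg(g)$ and the conditions (H1)--(H2) holding uniformly in $n$. The uniform fiber contraction then yields the Ledrappier--Walters identity $h_{\mu_n}(f_n) = h_{(\Pi_n)_* \mu_n}(g_n)$, and by the construction of $\mu_n$ in Theorem~\ref{thm.max.entropy}, $(\Pi_n)_* \mu_n$ coincides with the maximal entropy measure of $g_n$. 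The statistical stability for the expanding base, established in \cite{CV13}, gives $(\Pi_n)_* \mu_n \to \nu$ weak-*, where $\nu$ is the maximal entropy measure of $g$. Continuity of the push-forward under uniform convergence of $\Pi_n$ then forces $\Pi_* \mu^* = \nu$. Combining with the analogous identity $h_{\mu^*}(f) = h_{\Pi_* \mu^*}(g)$ for the limit, and with $h(f_n) = h(g_n) = h(g) = h(f)$, we deduce $h_{\mu^*}(f) = h_\nu(g) = h(g) = h(f)$. Thus $\mu^*$ is a maximal entropy measure for $f$, and by uniqueness in Theorem~\ref{thm.max.entropy}, $\mu^* = \mu$, so the full sequence converges.

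The main obstacle lies in securing the Ledrappier--Walters identity uniformly along the perturbation. One must verify that the hypotheses of Theorem~\ref{thm.max.entropy}, most notably the existence, $f_n$-invariance, and bi-Lipschitz character of the holonomies $\pi^{(n)}_{\hat{x},\hat{y}}$ with constants independent of $n$, persist under $C^1$-small perturbations and vary continuously with $f_n$. In the solenoid setting this robustness is a consequence of the classical stability of the strong stable foliation and of the expanding base, but one needs quantitative control of how the stable holonomies and fiber diameters converge as $f_n \to f$ in order to pass from the identity $h_{\mu_n}(f_n) = h_{(\Pi_n)_* \mu_n}(g_n)$ at each finite level to the analogous identity for $\mu^*$ and $f$; this is where the bulk of the technical work resides.
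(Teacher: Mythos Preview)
Your argument is correct but follows a genuinely different route from the paper. The paper proves convergence directly on a neighborhood basis: it introduces a collection $\hat\cC$ of sets of the form $f^k(\Pi^{-1}(B))$ with $B\subset N$ a ball of $\nu$-null boundary, shows $\mu_n(\hat A)\to\mu(\hat A)$ for every $\hat A\in\hat\cC$ by sandwiching with slightly larger and smaller sets and invoking the statistical stability $\nu_n\to\nu$ from \cite{CV13} together with the $C^0$ convergence of the stable foliations, and then passes to general continuous test functions via a partition-of-unity argument. Your approach---compactness plus uniqueness---is more conceptual: any weak-* accumulation point $\mu^*$ is $f$-invariant, projects to $\nu$, hence maximizes entropy, hence equals $\mu$ by Theorem~\ref{thm.max.entropy}. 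Both proofs ultimately rest on the same two ingredients, namely the base-level stability from \cite{CV13} and the robustness of the semiconjugacy; the paper's version is more hands-on, yours is cleaner but less explicit about where the geometry enters.

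One remark: the ``main obstacle'' you flag in your last paragraph is overstated, and in fact mis-located. You do not need to transfer the identity $h_{\mu_n}(f_n)=h_{(\Pi_n)_*\mu_n}(g_n)$ along the sequence, nor do you need any Ledrappier--Walters equality for $\mu^*$. Once $\Pi_*\mu^*=\nu$ is established, the trivial factor-map inequality $h_{\mu^*}(f)\ge h_{\Pi_*\mu^*}(g)$ already gives $h_{\mu^*}(f)\ge h_\nu(g)=h(g)=h(f)$, so $\mu^*$ is automatically of maximal entropy; no quantitative control of holonomies or fiber diameters along the perturbation is needed for that step. The only genuine technical point is the robustness of the semiconjugacy data ($\Pi_n\to\Pi$ uniformly and $g_n\to g$ within the class where \cite{CV13} applies), which in the derived-from-solenoid case follows from normal hyperbolicity of the invariant torus and the $C^0$ continuity of the strong stable foliation---precisely the same geometric input that the paper's direct proof also uses.
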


Using the theory of projective metrics over invariant cones,  we prove:

\begin{maintheorem}{\textbf{(Exponential Decay of Correlations)}} \label{thB}
The maximal measure entropy $\mu$ for $f:\Lambda \to \Lambda$ has exponential decay of correlations for 
H\"older continuous observables, that is, 
there exists some $0<\tau<1$ 
such that for $\alpha$-H\"older continuous $\fhi,\psi$ there exists $K(\varphi,\psi)>0$ satisfying
\begin{equation*}
\left|\int (\varphi\circ f^n) \psi d\nu - \int \varphi d\nu\int \psi d\nu\right|
	\leq K(\varphi,\psi)\cdot\tau^n,
	\quad \text{for all $n\ge 1$}.
\end{equation*}

\end{maintheorem}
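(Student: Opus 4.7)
The plan is to apply the Birkhoff--Hilbert projective metric technique on an invariant cone, adapted to the invertible setting via the semiconjugacy $\Pi:M\to N$ and the strong stable foliation. A direct transplant of the non-invertible argument of \cite{CV13} is obstructed by the fact that, in the invertible setting, every $f$-invariant probability is a fixed point of the adjoint of the Koopman operator, so the maximal entropy measure $\mu$ is not singled out as a leading eigenmeasure of any transfer operator defined only in terms of $f$. The strategy is therefore to introduce an auxiliary transfer-type operator $\cL$, on a suitable Banach space of observables on $\Lambda$, which mimics the Ruelle--Perron--Frobenius operator of $g$ along the center-unstable direction while using the stable contraction to collapse the behavior along the fibers $M_y=\Pi^{-1}(y)$.

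The first step is to construct a family $\{\nu_y\}_{y\in N}$ of probability measures on the stable leaves $M_y\cap\Lambda$ that is simultaneously (i) holonomy invariant, i.e. $(\pi_{\hat x,\hat y})_{*}\nu_{\hat x}=\nu_{\hat y}$, and (ii) equidistributed with respect to the preimage structure of $g$, so that it is compatible with the lift of counting of branches of $g$. Using these leaf measures, I would define $\cL$ by first averaging an observable $\fhi:\Lambda\to\R$ along the stable leaf using $\nu_y$, producing a function on $N$, and then applying the transfer operator of $g$ weighted so that its leading eigenvalue equals $\htop(g)=\htop(f)$ (uniqueness of the latter being precisely what Theorem~\ref{thm.max.entropy} provides via the semiconjugacy). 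The holonomy invariance of $\{\nu_y\}$ is what makes this construction well defined and, crucially, $f$-compatible.

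Next I would introduce a cone $\cC$ of strictly positive functions on $\Lambda$ satisfying a H\"older-type log-variation bound along the center-unstable direction, lifted from the cone used in \cite{CV13} through $\Pi$ and $\{\nu_y\}$. The two technical tasks are to prove $\cL(\cC)\subset \cC$ and that $\cL(\cC)$ has finite diameter in $\cC$ with respect to the Birkhoff projective metric. Invariance uses the expansion of $g$ off the region $\Omega$ (so that the average H\"older constant improves under $\cL$), combined with hypothesis (H2) and the holonomy invariance of $\nu_y$ to absorb differences along stable leaves. Finite diameter follows from a uniform mixing estimate: iterates of any $\fhi\in\cC$ get comparable up to a bounded multiplicative factor after finitely many applications of $\cL$, using the mild topological transitivity available in the setting and the exponential stable contraction $\la_s$. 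By Birkhoff's theorem this yields a contraction constant $0<\tau<1$ for $\cL$ in the projective metric.

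Finally, the exponential decay of correlations follows by the standard duality between $\cL$ and the Koopman operator of $f$. For $\al$-H\"older $\fhi,\psi$ one rewrites
\[
\int(\fhi\circ f^n)\,\psi\,d\mu - \int\fhi\,d\mu\int\psi\,d\mu
\]
as the pairing of $\fhi$ against $\cL^n$ applied to a density built from $\psi$ (after splitting $\psi$ into a constant multiple of the fixed density plus an element of the closed linear span of $\cC$). The Birkhoff contraction, together with the standard comparison between projective metric and $L^\infty$ or sup-norm distance in $\cC$, then gives a bound of order $K(\fhi,\psi)\,\tau^n$, with $K$ controlled by the $\al$-H\"older norms. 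I expect the main obstacle to be the finite-diameter step: one must quantify, uniformly over $\Lambda$ and including the difficult region $\Pi^{-1}(\Omega)$, how holonomies interact with the expansion of $g$, and organize this into a single comparison estimate between arbitrary $\cL^N\fhi_1$ and $\cL^N\fhi_2$ for some fixed $N$.
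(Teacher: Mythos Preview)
Your overall architecture (leaf measures, cone, Birkhoff contraction, duality) is right, but two specific choices diverge from the paper and one of them creates a genuine gap.

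First, the transfer operator. You propose an operator $\cL$ that first averages $\fhi$ along stable leaves (producing a function on $N$) and then applies the Ruelle operator of $g$. The paper does \emph{not} do this: it works with the plain operator $\L\fhi=\fhi\circ f^{-1}$ on $C(\Lambda)$, for which the duality
\[
\int(\fhi\circ f^n)\,\psi\,d\mu=\int\fhi\,\L^n\psi\,d\mu
\]
is a tautology. Your $\cL$ does not satisfy this identity: averaging along leaves loses the information needed to recover $\int_\gamma(\fhi\circ f^n)\psi\,d\mu_\gamma$, because the average of the product is not the product of the averages, and because $f^n(\gamma)$ is only a $1/p^n$-piece of the target leaf, so $\overline{\fhi\circ f^n}(y)\neq\bar\fhi(g^n(y))$. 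Thus your ``standard duality'' step, as written, fails. One can repair this with an extra approximation argument (replacing $\fhi\circ f^n$ by its leaf average at cost $O(\lambda_s^{\alpha n})$ and then invoking decay for $g$ from \cite{CV13}), but that is a different proof, not the one you outlined.

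Second, the cone. You describe $\cC$ as the cone of \cite{CV13} lifted through $\Pi$, i.e.\ positivity plus a center-unstable log-H\"older bound. The paper's cone $C(b,c,\alpha)$ has \emph{three} conditions: (A) positivity of $\int_\gamma\fhi\rho\,d\mu_\gamma$ for all test densities $\rho$ in an auxiliary leaf-cone $\D(\gamma,\kappa)$; (B) a bound on the variation $\rho\mapsto\int_\gamma\fhi\rho\,d\mu_\gamma$ in terms of the projective distance $\theta(\rho',\rho'')$ between densities; and (C) the center-unstable H\"older condition between nearby leaves. Condition (B) is the new ingredient that substitutes for H\"older regularity in the stable direction. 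The invariance proof then hinges on the identity
\[
\int_\gamma \L\fhi\cdot\rho\,d\mu_\gamma=\sum_{j=1}^{p}\int_{\gamma_j}\fhi\,\rho_j\,d\mu_{\gamma_j},\qquad \rho_j=\tfrac{1}{p}\,\rho\circ f,
\]
together with a lemma that $\rho\in\D(\gamma,\kappa)\Rightarrow\rho_j\in\D(\gamma_j,\lambda\kappa)$ for some $\lambda<1$ (this is where the stable contraction $\lambda_s$ enters). Condition (C) is handled exactly as in \cite{CV13} using (H1)--(H2). The finite-diameter argument reduces, via (B), to bounding ratios $\int_{\hat\gamma_j}\fhi\hat\rho_j/\int_{\gamma_j}\fhi\rho_j$ for preimage leaves in the same inverse branch, which is elementary once (A)--(C) are in place; it is not where the main difficulty lies. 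Your proposal does not contain condition (B) or the auxiliary density cones, and without them the natural operator $\fhi\mapsto\fhi\circ f^{-1}$ does not improve any cone defined purely by center-unstable H\"older data.
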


For the maximal entropy measure $\mu$ the following theorem also holds:

\begin{maintheorem}{\textbf{(Central Limit Theorem)}}

Let $\mu$ be the maximal entropy measure for $f:\Lambda \to \Lambda$, as in (\ref{contextoprincipal}) and let $\varphi$ be a H\"older continuous function. If
$$
\sigma_\varphi^2:=\int \phi^2 d\mu + 2\sum\limits_{j=1}^{\infty}\int \phi\cdot (\phi\circ f^j) \, d\mu,
\quad \text{ with } \quad \phi=\varphi-\int \varphi \, d\mu,
$$
then 
 $\sigma_\varphi<\infty$ and $\sigma_\varphi=0$ if, and only if, $\varphi=u\circ f - u$ for some $u \in L^2(\mu)$. 
Moreover, if $\sigma_\varphi>0$ then,  for all interval $A\subset\real$
\begin{equation*}
	\lim_{n\to\infty}\mu\left(x\in M: \frac{1}{\sqrt{n}}\sum\limits_{j=0}^{n-1}
	\left(\varphi(f^j(x))-\int \varphi d\mu\right)\in A\right)= \frac{1}{\sigma_\varphi\sqrt{2\pi}}
	\int_A e^{-\frac{t^2}{2\sigma_\varphi^2}} dt
\end{equation*}
holds.
\end{maintheorem}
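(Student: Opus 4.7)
The plan is to deduce the Central Limit Theorem from the exponential decay of correlations proved in Theorem B via Gordin's martingale approximation method, which is the standard route once a spectral gap (equivalently, a contractive invariant cone) for the transfer operator has been established.

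First, I would address finiteness of $\sigma_\varphi^2$ and the easy direction of the coboundary characterization. Writing $\phi = \varphi - \int \varphi \, d\mu$ and $S_n\phi = \sum_{j=0}^{n-1}\phi\circ f^j$, a direct expansion gives
\begin{equation*}
\frac{1}{n}\int (S_n\phi)^2 \, d\mu = \int \phi^2 \, d\mu + 2\sum_{k=1}^{n-1}\Bigl(1-\frac{k}{n}\Bigr)\int \phi \cdot (\phi\circ f^k)\, d\mu.
\end{equation*}
Theorem B applied with $\varphi=\psi=\phi$ gives $|\int \phi (\phi\circ f^k)\,d\mu| \leq K(\phi,\phi)\,\tau^k$, so the series defining $\sigma_\varphi^2$ converges absolutely and $(1/n)\int (S_n\phi)^2\,d\mu \to \sigma_\varphi^2<\infty$. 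If $\varphi = u\circ f - u$ with $u\in L^2(\mu)$, then $S_n\phi = u\circ f^n - u$ is bounded in $L^2$, forcing $\sigma_\varphi=0$.

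Second, I would build the martingale--coboundary decomposition. Let $\mathcal{L}$ denote the normalized transfer operator constructed in the proof of Theorem B, characterized by the duality $\int (\xi\circ f)\cdot \eta\, d\mu = \int \xi\cdot (\mathcal{L}\eta)\,d\mu$ for bounded $\xi$ and $\eta$ in the Banach space on which $\mathcal{L}$ acts. The cone contraction used to prove Theorem B yields a spectral gap: there are $C>0$ and $0<\tau<1$ with $\|\mathcal{L}^n \eta - \int \eta\, d\mu\| \leq C\tau^n \|\eta\|$. For the mean-zero observable $\phi$ the series
\begin{equation*}
u := \sum_{n=0}^{\infty} \mathcal{L}^{n+1}\phi
\end{equation*}
converges in this Banach space, hence in $L^2(\mu)$, and by construction $\mathcal{L}\phi = u - \mathcal{L}u$, so setting $\psi := \phi - (u\circ f - u)$ one obtains $\mathcal{L}\psi = 0$. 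Via the duality above, $\mathcal{L}\psi=0$ is equivalent to $\mathbb{E}_\mu[\psi \mid f^{-1}(\mathcal{B})] = 0$, which means that $\{\psi\circ f^{-j}\}_{j\geq 0}$ is a reverse-martingale difference sequence with respect to the decreasing filtration $\mathcal{F}_j := f^j(\mathcal{B})$.

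Third, I would apply the Billingsley--Brown martingale CLT to the stationary ergodic sequence $\{\psi\circ f^{-j}\}$ to conclude that $n^{-1/2}\sum_{j=0}^{n-1}\psi\circ f^j$ converges in distribution under $\mu$ to a Gaussian with variance $\int\psi^2\,d\mu$. The coboundary contribution $u\circ f^n - u$ is $O(1)$ in $L^2(\mu)$, hence $o(\sqrt{n})$ in probability, so $n^{-1/2}S_n\phi$ has the same limiting distribution; a standard identity verifies $\int\psi^2 d\mu = \sigma_\varphi^2$. The converse in the characterization also drops out: $\sigma_\varphi=0$ forces $\psi\equiv 0$ $\mu$-a.e., so $\phi = u\circ f - u$ with $u\in L^2(\mu)$.

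The main obstacle is the spectral-gap step for $\mathcal{L}$ on a space that embeds continuously into $L^2(\mu)$, together with the $L^2$-estimate on $u$. In the present setting the invariant cone is tailored to regularity along center-unstable directions combined with the family of holonomy-invariant equidistributed measures along stable leaves that was constructed for Theorem B; the required embedding into $L^2(\mu)$ and the $L^2$-bound on $u$ therefore have to be extracted from that geometric construction rather than imported from a classical reference.
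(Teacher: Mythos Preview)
Your overall plan—derive the CLT from Theorem~B via a Gordin/martingale argument—matches the paper's strategy, and your treatment of $\sigma_\varphi^2<\infty$ and of the easy coboundary direction is fine. The gap is in the martingale--coboundary decomposition you propose.

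Here $f$ is invertible and the normalized transfer operator is simply $\mathcal{L}\eta=\eta\circ f^{-1}$; in particular $\mathcal{L}$ is an \emph{isometry} of $L^2(\mu)$, so $\|\mathcal{L}^{n+1}\phi\|_{L^2(\mu)}=\|\phi\|_{L^2(\mu)}$ for every $n$ and the series $u=\sum_{n\ge 0}\mathcal{L}^{n+1}\phi$ diverges in $L^2$. The cone contraction behind Theorem~B does yield a spectral gap, but only in the anisotropic Banach space described in the Remark following Theorem~C; that space is a completion with respect to a norm that measures regularity only \emph{after} integrating against densities along stable leaves, and it does not embed into $L^2(\mu)$. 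Hence the step ``converges in this Banach space, hence in $L^2(\mu)$'' fails, and you do not obtain $u\in L^2$ or $\psi\in L^2$---which is exactly what the martingale CLT requires. This is the obstacle you flag at the end; it cannot be extracted from the geometric construction, and it is precisely why the non-invertible argument does not port over.

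The paper avoids constructing $u$ altogether. It fixes the filtration $\mathcal{F}_n=f^{-n}(\mathcal{F}_0)$, $n\in\mathbb{Z}$, where $\mathcal{F}_0$ is the $\sigma$-algebra of Borelians of $\Lambda$ that are unions of local stable leaves, and verifies Gordin's two summability hypotheses directly. For $\sum_n\|\phi-\mathbb{E}(\phi\mid\mathcal{F}_{-n})\|_2<\infty$ one uses that $f^n$-images of stable leaves have diameter $\le C_s\lambda_s^{n}$, so H\"older continuity of $\phi$ gives $\|\phi-\mathbb{E}(\phi\mid\mathcal{F}_{-n})\|_\infty\lesssim\lambda_s^{\alpha n}$. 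For $\sum_n\|\mathbb{E}(\phi\mid\mathcal{F}_n)\|_2<\infty$ the key input is a sharpening of Theorem~B: if $\psi\in L^1(\mathcal{F}_0)$ (constant on stable leaves, hence $|\psi|_{\alpha}\!\restriction_{\gamma}=0$ for every $\gamma$) and $\varphi$ is H\"older, then the correlation is bounded by $C(\varphi)\,\|\psi\|_{L^1(\mu)}\,\tau^n$. This follows because for such $\psi$ the constant $K(\psi)$ of Lemma~\ref{lesoma} reduces to $-\inf\psi$, so the constant in the decay estimate depends only on $\|\psi\|_{L^1}$. Duality then gives $\|\mathbb{E}(\phi\mid\mathcal{F}_n)\|_2\lesssim\tau^n$, and Gordin's theorem (Theorem~\ref{TeoremaDeGordin}) yields both the CLT and the coboundary characterization.
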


In what follows, we shall describe the results the results for the class of partially hyperbolic 
diffeomorphisms considered in the Second Setting.
In this other context, we construct a dominant  eigenmeasure $\mu$ for the dual 
of the transfer operator acting in a suitable space of distributions.
We then prove:

\begin{maintheorem} The measure $\mu$ exhibits exponential decay of correlations in the space of H\"older continuous  observables \label{thD}. Furthermore,  the Central Limit Theorem holds for the measure $\mu$.
\end{maintheorem}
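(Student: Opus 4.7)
\textbf{Proof plan for Theorem \ref{thD}.} Granted that $\mu$ has already been produced as a dominant eigenmeasure for the dual of a transfer operator $\mathcal{L}_0$ acting on a suitable space of distributions, the task is to obtain a projective-metric contraction for $\mathcal{L}_0$ and then extract exponential decay of correlations and the CLT by the same machinery used for Theorem \ref{thB}. The plan is to adapt the cone construction from the First Setting to the Markov-partition geometry of the Second Setting.

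First I would exploit the Markov partition $\cR=\{R_1,\dots,R_p\}$ with its good/bad dichotomy to realize the relevant dynamics as a partially hyperbolic extension of a factor that is non-uniformly expanding in the center-unstable direction: collapsing local stable plates inside each rectangle produces a quotient whose inverse branches contract by $\zeta$ on good rectangles and expand only mildly (by $L$ close to $1$) on bad ones, with a combinatorial covering condition analogous to (H1)--(H2) of the First Setting. Using the $f$-invariant strong stable foliation inside the Markov structure, the stable holonomies between rectangles are well defined and $f$-invariant, and they allow one to lift the quotient transfer operator to an operator $\mathcal{L}_0$ on H\"older observables on $\Lambda$; the measure $\mu$ is then the pull-back of the eigenmeasure of the quotient.

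Next I would introduce a Banach space combining $\alpha$-H\"older regularity along cu-plates with controlled oscillation along stable plates, and inside it an invariant cone $\cC$ of positive observables whose logarithm is $\alpha$-H\"older along cu-plates (with a prescribed constant) and approximately invariant under the stable holonomies between rectangles. The key step is the invariance $\mathcal{L}_0\cC\subset\cC$: expanding $\mathcal{L}_0^n\varphi(x)$ as a sum over the inverse branches of $f^n$, the good-rectangle branches contract the H\"older constant by a factor $\lesssim\zeta^{n\alpha}$, whereas bad-rectangle branches are in strict minority by the covering hypothesis and their expansion $L$ close to $1$ is overwhelmed by the majority of good branches, exactly as in the First Setting. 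The holonomy-invariance clause of $\cC$ is preserved because the strong stable foliation is $f$-invariant and the Markov structure transports plates of one rectangle to plates of another.

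Once invariance is established, the mild mixing of $\cR$ (so that $f^n(R_i)\cap R_j\ne\emptyset$ for every $n\ge n_0$) is used to show that some iterate $\mathcal{L}_0^N$ maps $\cC$ into a subcone of finite Hilbert-projective diameter, by producing two-sided multiplicative bounds between any two images. Birkhoff's theorem on positive operators then yields a contraction rate $\tau\in(0,1)$ per iterate, and duality with the Koopman operator translates this into $|\int(\varphi\circ f^n)\psi\,d\mu-\int\varphi\,d\mu\int\psi\,d\mu|\le K(\varphi,\psi)\,\tau^n$ for $\alpha$-H\"older $\varphi,\psi$. The Central Limit Theorem then follows by a Gordin-Liverani argument: exponential decay gives absolute convergence of $\sigma_\varphi^2$, the coboundary characterization $\sigma_\varphi=0\iff\varphi=u\circ f-u$ comes from a Liv\v{s}ic-type argument in this Banach space, and the Gaussian limit law for the normalized Birkhoff sums is obtained by controlling their characteristic function through the projective contraction of $\mathcal{L}_0$. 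The hardest step is the cone-invariance lemma: the combinatorial weighting of good versus bad rectangles must be balanced against the chosen H\"older constant defining $\cC$, and the holonomy-invariance clause must simultaneously be preserved by $\mathcal{L}_0$ and rigid enough to force finite projective diameter after $N$ iterates; reconciling these two requirements is the main technical obstacle.
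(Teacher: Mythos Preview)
Your overall architecture is right: adapt the cone from the First Setting to the Markov geometry, prove invariance and finite projective diameter, then invoke Birkhoff and Gordin. But your description of the cone and of the mechanism of invariance does not match what actually works, and the discrepancy is not cosmetic.

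First, you write that one expands $\mathcal{L}_0^n\varphi(x)$ ``as a sum over the inverse branches of $f^n$''. But $f$ is a diffeomorphism, so for the constant potential $\mathcal{L}^n\varphi=\varphi\circ f^{-n}$ and there is no sum at all. The sum over branches only appears after one integrates over a local stable leaf $\gamma$: the paper equips each $\gamma\in\F^s_{loc}$ (intersected with its Markov rectangle) with an auxiliary probability $\mu_\gamma$ built by mass distribution through the Markov combinatorics, so that $\int_\gamma\mathcal{L}\varphi\,\rho\,d\mu_\gamma=\sum_{j=1}^{p_\gamma}\int_{\gamma_j}\varphi\,\rho_j\,d\mu_{\gamma_j}$, where the $\gamma_j$ are the stable plates with $f(\gamma_j)\subset\gamma$. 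This leaf-measure construction is the engine of the whole argument and you do not mention it.

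Second, the cone is \emph{not} a pointwise condition like ``$\log\varphi$ is $\alpha$-H\"older along cu-plates and approximately holonomy-invariant''. The paper's cone $C[b,c,\alpha]$ is defined entirely through the functionals $\varphi\mapsto\int_\gamma\varphi\rho\,d\mu_\gamma$: condition (A) asks these to be positive for test densities $\rho$ in an auxiliary cone $\D(\gamma,\kappa)$; condition (B) controls their variation as $\rho$ varies in $\D_1(\gamma)$, measured in the projective metric of $\D(\gamma,\kappa)$; and condition (C) compares $\int_\gamma\varphi\,d\mu_\gamma$ for two leaves $\gamma,\tilde\gamma$ \emph{in the same Markov rectangle}. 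The invariance of (B) comes from a contraction in the auxiliary cones $\D(\gamma,\kappa)\to\D(\gamma_j,\lambda\kappa)$ (Lemma~\ref{invconeaux} carries over verbatim), and the invariance of (C) uses the good/bad dichotomy on the distances $d(\gamma_j,\tilde\gamma_j)$ between preimage leaves. The finite-diameter step is where the Markov structure enters nontrivially: because $p_\gamma$ depends on the rectangle, one must group preimage leaves by rectangle and bound ratios $\int_{\hat\gamma_j}\varphi\hat\rho_j/\int_{\gamma_j}\varphi\rho_j$ for $\gamma_j,\hat\gamma_j$ in the same $R_k$, then handle mismatched counts $r_k(\hat\gamma)\neq r_k(\gamma)$ combinatorially. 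Your plan to get finite diameter from mild mixing after some $N$ iterates is not how it goes; the paper obtains finite diameter after a single iterate directly from conditions (A)--(C).

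In short: the missing ingredient is the family $\{\mu_\gamma\}$ of leaf measures and the cone of \emph{distributions} (integrals against test densities on stable plates) built on top of them. Without that, a pointwise H\"older-type cone will not be $\mathcal{L}$-invariant, precisely because $f^{-1}$ expands along stable leaves.
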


We note that, even in the second setting,  the Markov Partition permits us to construct a quotient
map from the original one (see page \pageref{defquot} for the precise definition).
In the cases in which the maximizing entropy measure exists and is unique for the quotient system,
one can repeat the arguments in Theorem \ref{thm.max.entropy} to conclude that the measure $\mu$ is
the (unique) maximal entropy measure for the system $(f, \Lambda)$.

In particular, by using \cite{LSV98}, we obtain:

\begin{corollary} \label{CorLSV}
Let $f$ be a system satisfying (1) through (5) of the Second Setting.
Suppose also that the center-unstable spaces of $f$ are one-dimensional.
Then $f$ has a unique maximizing entropy measure, which has
exponential decay of correlations and satisfies the Central Limit Theorem for 
H\"older continuous observables. 
\end{corollary}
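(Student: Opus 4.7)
The plan is to combine Theorem \ref{thD} with the quotient construction mentioned immediately before the statement of the corollary. The strategy has three stages: pass to a one-dimensional Markov quotient along stable leaves, invoke \cite{LSV98} to obtain existence and uniqueness of a maximal entropy measure for the quotient, and then lift this uniqueness back to $(f,\Lambda)$, at which point Theorem \ref{thD} supplies the statistical properties for free.

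First I would use the $f$-invariant strong stable foliation $\F^s_{loc}$ to collapse each rectangle $R_i\in \cR$ to its space of local stable leaves. Because $\dim E^{uc}=1$ and each $R_i$ is a proper Markov rectangle, the quotient of $\Lambda$ by this relation is a finite disjoint union of intervals $I_1,\ldots,I_p$, and the Markov property (condition (4) of the Second Setting) guarantees that $f$ descends to a piecewise monotone Markov map $\bar f:\bigcup_j I_j \to \bigcup_j I_j$ whose branches are either uniformly expanding (on images of good rectangles) or only satisfy $|(\bar f)'|^{-1}\leq L$ with $L\geq 1$ close to $1$ (on images of bad rectangles). The mild mixing hypothesis in (4) survives the quotient and yields topological mixing for $\bar f$ on its Markov graph.

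Next I would invoke \cite{LSV98}: in this one-dimensional Markov setting with finitely many branches and at most mild nonuniform expansion, their transfer-operator / cone techniques provide a unique $\bar f$-invariant probability $\bar\mu$ of maximal entropy. Since the stable fibers are uniformly contracted by condition (2), Bowen's standard estimate gives $h_{\topp}(\bar f)=h_{\topp}(f)$, and any $f$-invariant probability on $\Lambda$ projects to a $\bar f$-invariant probability with the same metric entropy. Thus $\bar\mu$ lifts to a unique $f$-invariant probability $\tilde\mu$ on $\Lambda$ of maximal entropy; this is precisely the lifting step already performed in the proof of Theorem \ref{thm.max.entropy}, now carried out fiberwise through the Markov partition instead of through a global semiconjugacy.

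Finally, the measure $\mu$ produced by Theorem \ref{thD} is $f$-invariant and, by construction, its projection to $\bigcup_j I_j$ is $\bar f$-invariant of maximal entropy; uniqueness on the quotient forces the projection to coincide with $\bar\mu$, and hence $\mu=\tilde\mu$. So $\mu$ is the unique maximal entropy measure for $(f,\Lambda)$, and the exponential decay of correlations and the Central Limit Theorem for H\"older observables are exactly the conclusions of Theorem \ref{thD}. The step I expect to be hardest is verifying that $\bar f$ genuinely falls under the hypotheses of \cite{LSV98}: the stable holonomies are only H\"older in general, so one must check that the distortion inherited on the quotient and the regularity of the branches of $\bar f$ are strong enough to run their transfer-operator machinery, and that no entropy is lost in the projection from $\Lambda$ to $\bigcup_j I_j$. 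Once these two verifications are in place, the remaining identifications are routine.
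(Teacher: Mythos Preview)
Your proposal is correct and follows essentially the same route as the paper: pass to the quotient along the strong stable foliation, invoke \cite{LSV98} in the one-dimensional center-unstable case to obtain existence and uniqueness of the maximal entropy measure for the quotient map, lift uniqueness back to $(f,\Lambda)$ via the arguments of Theorem~\ref{thm.max.entropy}, and then read off the statistical properties from Theorem~\ref{thD}. The paper's own treatment is in fact more terse than yours; it simply asserts that the one-dimensional quotient falls under \cite{LSV98} and that the lifting step from Section~\ref{secuniqueness} goes through, so your identification of the verification of the \cite{LSV98} hypotheses as the main point needing care is apt.
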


\subsection{Some Examples}
\label{secexample}

\ 

Let us start with examples of the first setting.

\begin{example}
  \label{exskew1}
The most simple family of examples is a skew-product obtained from a map $g:N \seta N$ as in \cite{CV13} 
(this means that $g$ can be taken in a robust class of nonuniformly expanding maps that, in particular, includes all expanding maps)
and an endomorphism
 $\Phi:N\times K \seta K$, by the formula
$$
\begin{array}{rll}
f:& N\times K &\seta N \times K \\
&(x,y)&\mapsto(g(x),\Phi(x,y))
\end{array}
$$
such that $f$ is a diffeomorphism onto its image, and for each $x \in N$,  $\Phi(x,\cdot):K \seta K$ is a  $\lambda_s$-contraction. 
In such case, $\Pi$ is the canonical projection in the first coordinate, and $N\times K= \ds\bigcup_{x \in N}K_x$, where
 $K_x=\{x\} \times K$ forall $x \in N$.

\end{example}

\begin{example}
\label{exskew2}
As a subexample, we may take the solenoid generated in the  solid torus  $S^1 \times D$.  We define $f$ by
$$
\begin{array}{rll}
f:& S^1\times D &\seta S^1\times D \\
&(\theta,z)&\mapsto(g(\theta),\fhi(\theta)+A(z))
\end{array}
$$
where $g$ is the  Manneville-Pomeau map given by

\begin{equation*}\label{eq. Manneville-Pomeau}
g(\theta)= \left\{
\begin{array}{cl}
\theta (1+2^{\alpha} \theta^{\alpha}) & ,\mbox{ if}\; 0 \leq \theta \leq \frac{1}{2}  \\
(\theta-1)(1+2^\alpha(1-\theta)^\alpha)+1 & ,\mbox{ if}\; \frac{1}{2} < \theta \leq 1
\end{array}
\right.
\end{equation*}
where $\al \in (0,1)$, $\fhi$ is a local diffeomorphism and $A$ is a contraction.
\end{example} 

\begin{example} 
 One can modify the examples above in order to obtain robust (containing an open set) classes
of examples. These are examples derived from solenoid-like systems. \label{exskew3} 
For sake of simplicity, we will give a construction in dimension four, which can be easily adapted
to similar higher dimensional examples.

Let us begin with a solenoid-like 
$C^2-$skew-product hyperbolic diffeomorphism
$f_0:T^2 \times D    \to T^2\times D$ similar to the examples 1 and 2 above.
We suppose that 
$$
\begin{array}{rll}
f_0:& T^2\times D &\seta T^2 \times D \\
&(x,y)&\mapsto(g_0(x),\Phi_0(x,y))
\end{array}
$$
is such that $g_0$ is an expanding map.

 We suppose
that the norm of $Df_0$ along
the stable subbundle and the
norm of $Df_{0}^{-1}$ along the
unstable bundle are bounded 
by a constant $\la_0< 1/3$.
Let $p$ be a fixed point of
$f_0$ and
let $\delta > 0$ be
a small constant.
Denote $V_0= B(p, \delta/2)$. Then,
in the  same manner as in \cite{Cas02}, 
we deform $f_0^{-1}$ inside $V_0$
by a isotopy obtaining a continuous family
of maps  $f_t, 0< t< 2$ in
such a way that
\begin{itemize}
\item[i)] The continuation $p_{f_{t}}$ 
of the fixed point $p$ goes 
through some generic bifurcation such as 
a flip bifurcation or a Hopf bifurcation. 
Points of different indexes appear in a transitive attractor for values of $t$ between $1$ and $2$
(staying all the time inside $V_0$). For $t= 1$
we have the first moment of the Hopf (or flip) bifurcation,
with $f_1$ conjugated to $f_0$. We  
suppose that  the derivative
$Df_1|_{E^{cu}}$ 
does not contract vectors.
In the case of Hopf bifurcation, we suppose that 
$Df_t|_{E^{cu}}(p_{f_t})$ 
exhibits complex eigenvalues, for all $t$;
\item[ii)] In the process, there always exist a strong-
stable cone field $C^{ss}$ 
(cf. \cite{Vi97} for definitions)
and a center-unstable cone field $C^{cu}$, defined everywhere,
such that  $C^{cu}$ contains the unstable direction
of the initial map $f_0$;
We also suppose that there exists a continuation of the torus $T^2 \times \{0\}$ which is $f_0$-invariant and normally hyperbolic.
So, for each $t \in [1, 2]$ there exists a  $f_t$-invariant manifold $T_t$ that is the normally hyperbolic continuation 
of  $T^2 \times \{0\}$.
\item[iii)] Moreover, the width of the cone fields
$C^{ss}$ and $C^{cu}$ are bounded by a small constant
$\alpha > 0$.
\item[iv)] There exist a constant $\sigma> 1$ and
a neighbourhood $V_1 \subset V_0 \cap W^s(p)$,
such that $J^c= \|det Df_{t}^{-1}|_{E^{cu}}\| > \sigma$
outside $V_1$;
\item[v)] The maps $f_{t}^{-1}$ is $\delta-C^0$ close 
to $f_0^{-1}$ outside $V_0$ so that $\|(Df_1^{-1}|_{E^{cu}})\| < \la_0 < 1/3 $ outside $V_0$.
\end{itemize}

Note that the properties 
stated in conditions i) 
through v), which are valid
for $f_t, 0 \leq t \leq 2$, 
are also valid 
for a whole $C^1$-neighbourhood $\SU$ of the set
of diffeomorphisms 
$\{f_t, 0\leq  t \leq 2\}$. 
In particular, by \cite{HPS77} conditions i) through
iii) imply that any $f \in \SU$ has an invariant
central foliation, since the central cone field enables
us to define a graph transform associated to it, 
with domain in the space of foliations tangent
to $C^{cu}$, which is not empty, since the
unstable foliation of $f_0$ is tangent to it.
On the other hand, all $f \in \SU$ also
exhibits a strong stable foliation varying
continuously with the diffeomorphism.
 
As a consequence of lemma 6.1 of \cite{BV00}
there is a $C^1$-neighbourhood $\SU_1 \subset \SU$
of the set $\{f_t, 1< t \leq 2 \}$ such that
for all $f \in \SU_1$, $\La= T^n$ 
is a partially hyperbolic attractor, which is
not hyperbolic, because it is transitive and
contains points with different
indexes. 

One can embed $T^2 \times D$ as a subset of $T^4$. 
So, it is easy to extend $f_t$ above to $T^4$ in a manner that each 
$f_t$ is hyperbolic (and structurally stable) outside  $T^2 \times D$. So, we
will assume each $f_t$ defined in $T^4$ in such way.

Now take $f$ in some small ball $B= B(f_1, \delta'),
\delta' < \delta/ 2$.
Suppose also that $\delta'$ is sufficiently
small such  that
all diffeomorphism in $B(f_1, \delta') \subset \SU$
is partially hyperbolic.
So,  if $\delta'> 0$ is small,
$B(f_1, \delta')$ is an open set
of diffeomorphisms of  $T^4$ satisfying the conditions in section \ref{contexto}.

\end{example} 

\begin{corollary} \label{exemp}
There exists an open set of non-hyperbolic
diffeomorphisms $f: T^4 \to T^4$ satisfying conditions expressed by equations \ref{contextoprincipal} through  \ref{fimcontexto}. 
\end{corollary}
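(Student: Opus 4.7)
The strategy is to take the open ball $B = B(f_1,\delta')\subset \SU_1$ of Example \ref{exskew3} and, for every $f\in B$, produce the semiconjugating data $(N,g,\Pi)$ together with the holonomies required by equations (\ref{contextoprincipal})--(\ref{fimcontexto}). Since $f_0$ (and hence $f_1$) is partially hyperbolic with a strong stable foliation $\F^{ss}$ that extends continuously to every $f\in B$ by standard invariant manifold theory (using property ii) and \cite{HPS77}), the leaf space
\[
N := T^4/\F^{ss}(f)
\]
is well defined. Because $\F^{ss}$ is a continuous foliation with uniformly $C^1$ leaves and $f$ permutes its leaves, the quotient map $\Pi:T^4\to N$ is continuous and surjective, and the induced map $g:N\to N$ satisfying $\Pi\circ f = g\circ\Pi$ is well defined. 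For $f$ close to $f_0$ the quotient $N$ is homeomorphic to $T^2$ and $g$ is a local homeomorphism of the same topological degree as $g_0$.

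The next step is to verify that $g$ belongs to the class of \cite{CV13}, i.e.\ satisfies (H1)--(H2). Outside $V_0$ we use (v): $\|Df^{-1}|_{E^{cu}}\|<\la_0<1/3$, which, after quotienting by $\F^{ss}$, yields a uniform expansion constant $\la_u<1$ for inverse branches of $g$ in $\Pi(T^4\setminus V_0)$. Inside $\Omega:=\Pi(V_0)$, the bifurcation of $p$ in (i) forces $\|Df^{-1}|_{E^{cu}}\|$ to be only bounded by some $L$ close to $1$, giving the Lipschitz constant on the bad region. Shrinking $\delta$ and $\delta'$ if needed, a finite covering $\cU$ of $N$ by injectivity domains of $g$ exists by compactness, and $\Omega$ lies in strictly fewer than $\deg(g)$ of them (use that $V_0$ is a small ball and $\deg g_0\ge 2$), giving (H2). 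This is the step I expect to be the main technical nuisance, because it requires carefully estimating how $Df|_{E^{cu}}$ descends to $Dg$ on the (only continuous) quotient $N$; however, since $E^{cu}$ is nearly parallel to the unstable direction of $f_0$ and the foliation $\F^{ss}$ varies continuously with $f$, standard graph-transform estimates for the holonomy of $\F^{ss}$ turn the bounds in (iv)--(v) into the corresponding bounds for the quotient dynamics.

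The fiber contraction hypothesis is immediate: each fiber $M_y=\Pi^{-1}(y)$ is a leaf of $\F^{ss}$, and by property (a) of the dominated splitting (or property ii) of the cone field in Example \ref{exskew3}) the map $f$ contracts $\F^{ss}$-leaves by a factor $\le \la_s<1$; since $\la_0<1/3<1$ this gives the uniform $\la_s$-contraction of $f|_{M_y}$. The holonomies $\pi_{\hat x,\hat y}$ between nearby stable leaves inside the attractor $\Lambda = \bigcap_{n\ge 0} f^n(Q)$ are defined by sliding along the center-unstable foliation $\F^{uc}$ provided by (iii); by the transversality of $\F^{ss}$ and $\F^{uc}$ (their cone fields in (ii) have small width $\al$), the distance comparison
\[
\tfrac{1}{C}\bigl[d(\hat x,\hat y)+d(\pi_{\hat x,\hat y}(x),y)\bigr]\le d(x,y)\le C\bigl[d(\hat x,\hat y)+d(\pi_{\hat x,\hat y}(x),y)\bigr]
\]
follows from a standard product box argument with $C$ depending only on the angle between the two cone fields. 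Finally, $f$-invariance of both foliations $\F^{ss}$ and $\F^{uc}$ yields (\ref{fimcontexto}): $f\bigl(\pi_{\hat x,\hat y}(z)\bigr) = \pi_{g(\hat x),g(\hat y)}(f(z))$.

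Since all the ingredients above are open conditions in the $C^1$-topology (they only rely on cone fields, domination, the existence of $\F^{ss}$ and $\F^{uc}$, and the quantitative estimates (iv)--(v), all of which persist on $B$), the corollary follows by taking $B(f_1,\delta')$ as the required open set of non-hyperbolic diffeomorphisms of $T^4$: non-hyperbolicity is guaranteed by the transitive coexistence of points of different indexes from (i).
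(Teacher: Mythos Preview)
Your argument is correct in spirit and more detailed than the paper's own proof, which simply takes the open set $\SU_2=\SU_1\cap B(f_1,\delta')$ and asserts that the conditions (\ref{contextoprincipal})--(\ref{fimcontexto}) hold for every diffeomorphism there, the verification having already been absorbed into the construction of Example~\ref{exskew3}.

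The one genuine methodological difference is how you realize $N$. You build $N$ as the abstract leaf space $T^4/\F^{ss}(f)$ and then argue that it is homeomorphic to $T^2$. The paper instead uses item~ii) of Example~\ref{exskew3} directly: the torus $T^2\times\{0\}$ is assumed normally hyperbolic for $f_0$, so by \cite{HPS77} it admits a $C^1$ continuation $T_t$ (and, more generally, a continuation $T_f$ for every $f$ in a $C^1$-neighborhood). One then takes $N=T_f$, $g=f|_{T_f}$, and $\Pi$ the strong-stable holonomy onto $T_f$. This buys two simplifications you had to work for: $N$ is automatically a genuine $C^1$ compact manifold (no leaf-space pathology to exclude), and the Lipschitz/expansion estimates (H1)--(H2) for $g$ are read off immediately from $\|Df^{-1}|_{E^{cu}}\|$ without passing through holonomy regularity of $\F^{ss}$, since $T_f$ is tangent to $E^{cu}$. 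Your route is equivalent once you observe that the strong-stable holonomy identifies $T^4/\F^{ss}(f)$ with $T_f$, but invoking the normally hyperbolic invariant torus up front is the cleaner shortcut the paper has in mind.
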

\begin{proof}
Just take the open set of diffeomorphisms
$\SU_2= \SU_1 \cap B(f_1, \delta')$,
$\delta'$ as in the proposition above.
Conditions
in equations \ref{contextoprincipal}-\ref{fimcontexto} fit for every
diffeomorphism in a ball $B(f_1, \delta')$.
\end{proof}

\begin{example}[Derived from Anosov]  \label{exsecond}
Ma\~n\'e \cite{Mane78} has introduced a robust class of partially hyperbolic attractors 
by a pitchfork or Hopf bifurcation  of some periodic orbit. In \cite{Cas02} the author
proved that a robust class of maps satisfying conditions  (1) through (5), can be obtained. 
In fact, let $g:M \to M$ as in example 2)  in \cite{Cas02} of a system derived from Anosov.  Taking $f=g^{-1}$, the strong unstable ($E^{uu}$) and center stable ($E^{cs}$) subbundles of  $g$ 
become respectively   the strong stable subbundle ($E^{ss}$) and center-unstable subbundle ($E^{uc}$) for $f$, and such $f$ 
satisfies (1) through (5).  

\end{example}

\section{Construction of  the Maximal Entropy Measure}
Due  to the contraction in the stable foliation,  the  dynamics of  distinct orbits  of $f:M \to M$ will be determined by  the dynamical behavior of the map $g:N \to N$. As seen in \cite{CV13}, such map $g$ has only a  unique  maximal entropy measure, which we will denote by $\nu$.  
 
 We start the construction of the maximal entropy measure for $f$ by definining it on measurable sets 
  of the form $\Pi^{-1}(A)$, where  $A$ is a Borelian set of $N$.
  
Since $\Pi$ is a semiconjugation, by \cite{W93} one obtain that,
$$
h(f) \geq h(g).
$$
Moreover, due to Bowen \cite{Bow71} it follows that
$$
h(f) \leq h(g)+\sup\{h(f,\Pi^{-1}(y)); y \in N \}
$$
We now prove that  $h(f,\Pi^{-1}(y))=0$ for all $y \in N$. Indeed,  since $f:M_y \to M_{g(y)}$ is a $\lambda_s$-contraction, given $\epsilon>0$, 
the only   $(n,\epsilon)$-separate subsets  restricted to $M_y$ are unitary subsets. As $\Pi^{-1}(y)$ can be writen as  a union of $m(\epsilon) \in \N$ balls of  $\epsilon$-diameter, we conclude that the  cardinality of any  $(n,\epsilon)$-separate subset  of $\Pi^{-1}(y)$ 
is at most $m(\epsilon)$. 
By the definition entropy due to Bowen, this implies $h(f,\Pi^{-1}(y))=0$ for all $y \in N$. 
Therefore,  $h(f) \leq h(g)$, and so $h(f)= h(g)$. 

This allows us to construct the maximal entropy measure for  $f$ from the one for $g$. In fact, denote by $\nu$ the unique maximal entropy measure built in \cite{CV13}. Due to the variational principle and the fact of $h(f)= h(g)$, it follows that $h_\nu(g)$ , is greater than, or equal to the metric entropy of any  $f-$invariant
probability.  So, for the proof of existence part of the statement, it is sufficient to  obtain an $f$-invariant probablity $\mu$, whose metric entropy with respect to  $f$ is greater or equal than $h_\nu(g)= h(g)$. 

For that purpose, let $\Pi_{\Lambda} = \Pi|_\Lambda$. Let $\A_{N}$ be the Borel $\sigma$-algebra on $N$. Clearly, $\A_0:=\Pi_{\Lambda}^{-1}(\A_{N})$ is a $\sigma$-algebra on $\Lambda$. 
Since $f$ is a bijection  in $\Lambda$ and $\Pi_{\Lambda} \circ f= g \circ \Pi_{\Lambda}$, we have 
$$
A=\Pi_{\Lambda}^{-1}(B)=f\circ \Pi_{\Lambda}^{-1} \circ g^{-1}(B).
$$
As $g^{-1} (B)$ belongs to $\A_N$, it follows that $\A_0 \subset f(\A_0)$ and therefore $\A_n:=f^n(\A_0)$ is a sequence of $\sigma$-algebras such that $\A_0 \subset \A_1 \subset \cdots \subset \A_n \subset \cdots$.  Define $\mu_n:\A_n \seta [0,1]$ by
$ 
\mu_n(f^n(A_0))=\nu(\Pi_\Lambda(A_0)),
$
for all $A_0 \in \A_0$. Note that  $\mu_n$ is an $f$-invariant probability for all $n \in \N$. In fact, given $A=f^n(A_0)$, where
 $A_0=\Pi_{\Lambda}^{-1}(B)$ and $B \in \A_N$, due to the $g$-invariance of $\nu$ and the surjection of maps $g$ and $\Pi_\Lambda$, we have:
$$
\begin{array}{rcl}
\mu_n(f^{-1}(A))&=&\mu_n(f^{-1}(f^n(A_0)))=\, \mu_n(f^n(f^{-1}(A_0)))\\
&=&\nu(\Pi_\Lambda(f^{-1}(A_0)))=\, \nu(\Pi_{\Lambda}(f^{-1}\circ \Pi_{\Lambda}^{-1}(B) ))\\
&=&\nu(\Pi_{\Lambda}(\Pi_{\Lambda}^{-1}\circ g^{-1}(B) ))=\, \nu(g^{-1}(B))\\
&=&\nu(B)=\, \nu(\Pi_{\Lambda}(A_0))= \, \mu_n(f^n(A_0))= \, \mu_n(A)\\
\end{array}
$$

Now, as $\A_n \subset \A_{n+1}$, $\A:= \ds\bigcup_{n=0}^{\infty} \A_n$ is an algebra in $\Lambda$.

Then we define $\mu: \A \seta [0,1]$ the probability such that  $\mu(A)=\mu_n(A)$ if $A \in \A_n$. By the standard
measure theory arguments(see \cite{ Mane}), $\mu$ is $\sigma$-aditive.
Moreover, $\mu$ is an $f$-invariant probability, as $\mu_n$ are $f$-invariant probabilities. 
It rests to  prove that the smallest  $\sigma-$algebra that contains $\A$ is the Borel $\sigma-$algebra. 

For that, it is  sufficient to see that $\A$ contains a sequence of partitions whose diameter goes to zero. 

This is because $f:M_y \to M_{g(y)}$ is a $\lambda_s$-contraction. 

In fact, for each $n \in \N$, by the continuity of  $g^n$,  there exists $\delta(n)>0$ such that $d(z,w)<\delta(n)$ implies $d(g^n(z),g^n(w))<\lambda_s^n$, for all $z,w \in N$. Taking $\P^0_n$ a  partition of $N$ whose diameter is less than $\delta(n)$, we define a sequence of partitions of $\Lambda$ by 
\begin{equation}\label{ParticaoDiametroToZero}
\P_n:=f^n\l(\Pi_{\Lambda}^{-1}\l(\P^0_n\r)\r)
\end{equation}
Clearly, $\diam(\P_n) \to 0$ as $n \to +\infty$. Indeed,  given $\bar{x},\bar{y}$ in the same element of $\P_n$, writing $\bar{x}=f^n(x)$ and $\bar{y}=f^n(y)$ we have $\hat{x}= \Pi(x),\hat{y}= \Pi(y) \in P^0_n$. Therefore, noting that 
$g^n(\hat{x})=g^n(\Pi(x))=\Pi(f^n(x))=\tilde{x}$ and $g^n(\hat{y})=g^n(\Pi(y))=\Pi(f^n(y))=\tilde{y}$ we obtain
$$
\begin{array}{rcl}
d(f^n(x),f^n(y)) &\leq & C \l[ d(\tilde{x},\tilde{y}) + d(\pi_{\tilde{x},\tilde{y}}\circ f^n(x),f^n(y))\r] \\
&= & C \l[ d(g^n \circ \Pi(x),g^n \circ \Pi(y)) + d(f^n (\pi_{\hat{x},\hat{y}}(x)),f^n(y))\r] \\
& \leq & C \l[\lambda_s^n + \lambda_s^n d(\pi_{\hat{x},\hat{y}}(x),y)\r] \leq \,  C \l[1+diam(M)\r]\lambda_s^n.\\

\end{array}
$$
By a slight abuse of notation, we also write $\mu$ for its natural extension to the Borel $\sigma$-algebra of $M$. 

Now we prove that $\mu$ is a maximizing entropy measure for $f$, by proving that $h_\mu(f) \geq h_\nu(g)$. 
Denote by $B^n_\epsilon(g,y_0)$ a $(n,\epsilon)$-dynamical ball  of $g$ around $y_0 \in N$, that is, the set of points $y \in N$, 
such that $d(g^j(y),g^j(y_0)) < \epsilon, \forall j \in \{0,\cdots,n-1\}$. 
Due Brin-Katok Theorem, $\nu$-a.e. point $y \in N$,
$$
h_{\nu}(g)=\lim_{\epsilon \to 0} \limsup_{n \seta \infty} \f{1}{n}\log \f{1}{\nu\l(B^n_\epsilon(g,y)\r)}
$$
holds.

Take now $B^n_\epsilon(f,x)$ the $(n,\epsilon)$ dynamical ball of $f$ restricted to $\Lambda$ at $x \in \Lambda$. 
By the uniform continuity of $\Pi$, given $\epsilon >0$ there exists $0<\delta<\epsilon$ such that  $\Pi(B_{\delta}(w)) \subset B_{\epsilon}(\Pi(w))$ 
for all $w \in M$. Note that $B^n_\delta(f,x) \subset \Pi_\Lambda^{-1}\l(B^n_\epsilon(g,y)\r)$ for all $x \in \Pi_\Lambda^{-1}(y)$. 

In fact, given $z \in B^n_\delta(f,x)$ we shall prove that $\Pi(z) \in B^n_\epsilon(g,y)$. As $\Pi(x)=y$ we have for all $j \in \l\{0, \cdots, n-1\r\}$
$$
\begin{array}{rcl}
d(g^j\circ\Pi(z),g^j(y))&=&d(g^j\circ\Pi(z),g^j\circ\Pi(x))= d(\Pi\circ f^j(z),\Pi\circ f^j(x))< \epsilon.\\
\end{array}
$$
Therefore
$$
\mu\l(B^n_{\delta}(f,x)\r) \leq \mu\l(\Pi_\Lambda^{-1}\l(B^n_\epsilon\l(g,y\r)\r)\r)=\nu\l(B^n_\epsilon\l(g,y\r)\r)
$$
and since  $\delta \to 0$ as $\epsilon \to 0$ we obtain
$$
h_{\nu}(g) \leq \lim_{\delta \to 0} \limsup_{n \to \infty} \f{1}{n}\log \f{1}{\mu\l(B^n_{\delta}(f,x)\r)}
$$
for $\mu-$a.e. $x \in \La$.
So,
$$
\begin{array}{rcl}
h_{\mu}(f)&=&\ds\int_{\Lambda} \lim_{\delta \to 0} \limsup_{n \to \infty} \f{1}{n}\log \f{1}{\mu\l(B^n_\delta(f,x)\r)} d\mu \geq \ds\int_{\Lambda} h_{\nu}(g) d\mu =  \, h_{\nu}(g)
\end{array}
$$
and we conclude that $h_{\mu}(f) \geq h_{\nu}(g)=h(g)=h(f)$, which is the equivalent to say 
that  $\mu$ is maximal entropy measure for $f$.

\section{Uniqueness of Maximal Entropy Measure}

\label{secuniqueness} 
Now we prove the  uniqueness of maximal entropy measure  for $f$ built  in the last section. For such purpose, we use the 
 uniqueness of the maximal entropy measure for $g$, provided by \cite{CV13}. Suppose that $\mu_1$ is another invariant maximal entropy
 measure for $f$, different to $\mu$. Let $\nu_1:=\l(\Pi_{\Lambda}\r)_*\mu_1$, the push-forward of  $\mu_1$.
 
  We claim  that since $\mu_1$ is different to  $\mu$, it follows that $\nu_1$ is different to $\nu$. 
  Indeed, since $\mu_1 \neq \mu$, $\mu_1(A) \neq \mu(A)$ for some $A \in \A=\A_0\cup f(\A_0)\cup \cdots \cup f^n(\A_0)\cup\cdots$. The fact that such algebras on $\P(\Lambda)$ are nested implies that exist $A_0 \in \A_0$ and $n \in \N$ such that $f^n(A_0)=A$. By the definition
  of  $\A_0$, there exists $B_0 \in \A_N$ such that  $\Pi^{-1}_{\Lambda}(B_0)=A_0$. We now observe that, on one hand,   
$$
\nu_1(B_0)=\l(\Pi_{\Lambda}\r)_*\mu_1(B_0)=\mu_1(\Pi^{-1}_{\Lambda}(B_0))=\mu_1(A_0)=\mu_1(f^n(A_0))=\mu_1(A)
$$
and on the other hand, 
$$
\nu(B_0)=\nu(\Pi_{\Lambda}(A_0))=\mu(A_0)=\mu(f^n(A_0))=\mu(A).
$$
So, $\nu_1 \neq \nu$. By the $f$-invariance of $\mu_1$ it follow that  $\nu_1$ is $g$-invariant.

Let us prove that $\nu_1$ is a maximal entropy measure for $g$, which is a contradiction, since by \cite{CV13}, such 
probability is unique. For that, it is sufficient to prove that  $h_{\nu_1}(g) \geq h_{\mu_1}(f)$, since $h_{\mu_1}(f)= h(f)= h(g)$.

In fact, we may suppose that the sequence $\P_n=f^n\l(\Pi_{\Lambda}^{-1}\l(\P^0_n\r)\r)$, in \ref{ParticaoDiametroToZero}, is such that $\P_0 \leq \P_1 \leq \cdots \leq \P_n \leq \cdots$ and  as $\ds\bigcup_{n=0}^{\infty}\P_n$ generates the Borel $\sigma$-algebra of $\Lambda$,
we obtain
$$
h_{\mu_1}(f)=\sup_n\l\{h_{\mu_1}(f,P_n)\r\}.
$$
Therefore, for all $\epsilon>0$ there exists $n \in N$ such that
$$
h_{\mu_1}(f,P_n) \geq h_{\mu_1}(f)-\epsilon.
$$
However, if follows from the definition of  $\nu_1$ that for all $n \in \N$
$$
h_{\nu_1}(g,P^0_n)=h_{\mu_1}(f,\Pi_{\Lambda}^{-1}\l(P^0_n\r)).
$$
Indeed,  for a partition $\P$ we have
$$h_{\nu_1}(g,\P)= \ds\lim_{m \to \infty} \f{1}{m} h_{\nu_1}\l(\P \vee g^{-1}(\P)\vee \cdots \vee g^{-(m-1)}(\P)\r)$$
Due to the definition of $\nu_1$ and the semiconjugation between  $f$ and $g$ we obtain
$$
\begin{array}{rcl}
\nu_1\l(\ds\bigvee_{j=0}^{m-1}g^{-j}(P_{i_j})\r)&=&\mu_1\l(\Pi^{-1}_{\Lambda}\l(\ds\bigvee_{j=0}^{m-1}g^{-j}(P_{i_j})\r)\r)\\
&&\\
&=&\mu_1\l(\ds\bigvee_{j=0}^{m-1} f^{-j}\l(\Pi^{-1}_{\Lambda}(P_{i_j})\r)\r)\\
\end{array}
$$
which guarantees $h_{\nu_1}\l(\ds\bigvee_{j=0}^{m-1}g^{-j}(\P)\r)=h_{\mu_1}\l(\ds\bigvee_{j=0}^{m-1} f^{-j}\l(\Pi^{-1}_{\Lambda}(\P)\r)\r)$ 
and so, we have
$$h_{\nu_1}(g,\P)=h_{\mu_1}(f,\Pi_{\Lambda}^{-1}\l(\P\r)).$$
From the  $f$-invariance of $\mu_1$ it follows that
$$
h_{\mu_1}(f,\Pi_{\Lambda}^{-1}\l(\P^0_n\r))=h_{\mu_1}(f,\P_n)
$$
because $P_{n_j} \in \P_n$ if and only if there exist $P^0_{n_j} \in \P^0_n$ such that $P_{n_j}=f^n(\Pi^{-1}_{\Lambda}(P^0_{n_j}))$. 
Therefore
$$
\begin{array}{rcl}
\mu_1\l(\bigvee_{j=0}^{m-1} f^{-j}(P_{n_j})\r)&=&\mu_1\l(\bigvee_{j=0}^{m-1} f^{-j}\l(f^n\l(\Pi^{-1}_{\Lambda}(P^0_{n_j})\r)\r)\r)\\
&=&\mu_1\l(\bigvee_{j=0}^{m-1} f^{-j}\l(\Pi^{-1}_{\Lambda}(P^0_{n_j})\r)\r).\\
\end{array}
$$
We then obtain that for all $\epsilon > 0$ there exists $n \in \N$ such that
$$
\begin{array}{rcl}
h_{\nu_1}\l(g\r)& \geq & h_{\nu_1}\l(g,P^0_n\r) = h_{\mu_1}\l(f,\Pi^{-1}_{\Lambda}P^0_n\r)\\
& = & h_{\mu_1}\l(f,P_n\r) \geq  h_{\mu_1}(f)-\epsilon\\

\end{array}
$$
and this proves that $h_{\nu_1}(g)\geq h_{\mu_1}(f)$, and the uniqueness of the maximal entropy measure.  

\section{Statistical Stability}

Now we prove the statistical stability for the  maximizing probability measure $\mu$.
That is, given $f_n \to f$ in the $C^1-$topology,  then $\mu_n \to \mu$ in
weak-$*$ topology, where $\mu_n$ (respectively,  $\mu$)  is the maximizing entropy measure for $f_n$ (respectively, $f$).

Let us fix such $f$, and  consider the  collection  $\cC$ whose elements are open subsets $A \subset M$ whose frontier are $\mu$-zero sets 
with the form $A= \cup_{x \in B} M_x$, for some ball  $B \subset N$  with $\nu$-zero  frontier. Also denote by $\hat \cC \supset \cC$ 
the collection whose elements are  nonnegative interate of some element of $\cC$.
 Observe that, if we fix $k \in \natural$, $ f^k(\cup_{x \in N} M_x)$ is a neighborhood 
for the attractors $\Lambda_n$ where $\mu_n$ are supported,  for all sufficiently big $n$.  
Note that $\hat \cC$ is a neighborhood basis for $\Lambda$.

The key ingredient for the proof is the lemma:

\begin{lemma}
Let  $\hat A \in \hat\cC$. Then $\mu_n(\hat{A}) \to \mu(\hat{A})$ as $n \to +\infty$.
\end{lemma}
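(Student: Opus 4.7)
The plan is to reduce the claim to the known statistical stability of the base dynamics. By repeating the construction of Section~3 for each $f_n$, the pushforward $\nu_n := (\Pi_{\Lambda})_* \mu_n$ is the (unique) maximal entropy measure of the associated base map $g_n$, and $\mu_n(\Pi^{-1}(B)) = \nu_n(B)$ for every Borel $B \subset N$. In the derived from solenoid setting the sequence $g_n \to g$ falls in the class of \cite{CV13}, so the statistical stability proved there gives $\nu_n \to \nu$ in the weak-$*$ topology. This is the key input.

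First I would handle the base case $\hat A = A \in \cC$, where $A = \Pi^{-1}(B)$ with $\nu(\partial B) = 0$: then $\mu_n(A) = \nu_n(B) \to \nu(B) = \mu(A)$ by Portmanteau. For the general $\hat A = f^k(A)$, I would use $f_n$-invariance of $\mu_n$ to rewrite
\[
   \mu_n(\hat A) \;=\; \mu_n(f^k(A)) \;=\; \mu_n\bigl(f_n^{-k}(f^k(A))\bigr) \;=\; \mu_n(B_n),
\]
with $B_n := f_n^{-k}(f^k(A))$. Since $f_n \to f$ in $C^1$, the iterates $f_n^{\pm k}$ converge to $f^{\pm k}$ uniformly on the (compact) ambient manifold, so there exists $\epsilon_n \to 0$ with $B_n \triangle A \subset N_{\epsilon_n}(\partial A)$. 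Using $\partial A = \Pi^{-1}(\partial B)$ together with uniform continuity of $\Pi$, one has $N_{\epsilon_n}(\partial A) \subset \Pi^{-1}(N_{C\epsilon_n}(\partial B))$ for a fixed $C > 0$, whence
\[
   |\mu_n(B_n) - \mu_n(A)| \;\leq\; \mu_n(B_n \triangle A) \;\leq\; \nu_n\bigl(N_{C\epsilon_n}(\partial B)\bigr).
\]

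To make the right-hand side tend to zero, given $\varepsilon > 0$ I would choose $\delta > 0$ so small that $\nu\bigl(\overline{N_\delta(\partial B)}\bigr) < \varepsilon$ (available because $\nu(\partial B) = 0$), and then for $n$ large enough so that $C\epsilon_n < \delta$ apply the Portmanteau inequality $\limsup_n \nu_n(F) \leq \nu(F)$ to the closed set $F = \overline{N_\delta(\partial B)}$. Combined with the base case $\mu_n(A) \to \mu(A)$, this yields $\mu_n(\hat A) \to \mu(\hat A)$. The main obstacle, in my view, is precisely this uniform control (in $n$) of the $\mu_n$-mass near the boundary of $\hat A$: one cannot invoke a weak-$*$ convergence $\mu_n \to \mu$ directly, since that is the conclusion the whole section is aimed at, and the trick is to push the boundary estimate down to the base via $\Pi$, where the weak-$*$ convergence $\nu_n \to \nu$ is already available from \cite{CV13}. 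A secondary technicality I would verify in full is that in the perturbed setting one has a common (or $C^0$-perturbed) semiconjugacy $\Pi_n$ intertwining $f_n$ with an admissible base map $g_n$, so that the hypotheses of \cite{CV13} genuinely apply to the sequence $\{g_n\}$ and one may indeed identify $\Pi_* \mu_n$ with the unique maximal entropy measure of $g_n$.
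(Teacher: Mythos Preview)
Your overall strategy is sound and close to the paper's, but the base case $k=0$ contains the real difficulty, which you have misplaced as a ``secondary technicality''. You write $\mu_n(A)=\nu_n(B)$ for $A=\Pi^{-1}(B)$, and define $\nu_n=(\Pi_\Lambda)_*\mu_n$. But $\Pi=\Pi_\Lambda$ is the semiconjugacy for $f$, not for $f_n$; the relation coming from Section~3 applied to $f_n$ is $\mu_n(\Pi_n^{-1}(B))=\nu_n(B)$ for the perturbed projection $\Pi_n$ (whose fibers are the strong stable leaves of $f_n$). In general $\Pi_n^{-1}(B)\neq \Pi^{-1}(B)=A$, so your displayed equality is not justified, and the one-line Portmanteau argument does not go through. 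This is exactly the point where the $C^0$-convergence of the stable foliations (hence of the maps $\Pi_n$ to $\Pi$) must be used.

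The paper handles this by a sandwich: it picks $B^-\subset B\subset B^+$ with $\nu$-null boundaries and $\nu(B^+\setminus B^-)$ small, so that for large $n$ one has $\Pi_n^{-1}(B^-)\subset A\subset \Pi_n^{-1}(B^+)$; then $\mu_n(A)$ is squeezed between $\nu_n(B^-)$ and $\nu_n(B^+)$, and statistical stability of the base from \cite{CV13} finishes the estimate. Your symmetric-difference argument for $k>0$ is in fact the right tool here too: if you had first controlled $\mu_n\bigl(\Pi^{-1}(B)\,\triangle\,\Pi_n^{-1}(B)\bigr)$ via $\Pi_n\to\Pi$ in $C^0$ and then pushed the boundary mass down to $N$ exactly as you do later, the base case would be complete. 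So the fix is to apply to $k=0$ the very mechanism you reserved for $k>0$; once that is done, your proof and the paper's are essentially the same argument in slightly different packaging.
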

\begin{proof} 

Given $\hat A= f^k(A)$, with  $A= \cup_{x \in B} M_x$. We start with the case  $k= 0$, that is, first we prove that 
$\mu_n(A) \to \mu(A)$ as $n \to +\infty$. 

Set $A_n:=\Pi^{-1}_n(B)$. Therefore, $\mu_n(A_n)=\nu_n(B)$, where $\nu_n$ is the maximizing measure $g_n$ as in \cite{CV13}.
We also have  $\mu(A)=\nu(B)$, where $\nu$ is the entropy maximizing probability associated to  $g$, as in  \cite{CV13}.

Given  $\epsilon> 0$, take $B^+ \supset B \supset B^-$,  $\nu-$zero frontier such that
$$
\nu(B^+)- \epsilon/3< \nu(B)< \nu(B^-)+ \epsilon/3, 
$$
Let us also assume that
$A_n^\pm :=\Pi_n^{-1}(B^\pm)$, 
with $\mu-$zero frontier such that there exists  $n_2$ 
that forall $n \geq n_2$ 
${A_n^+} \supset A \supset {A_n^-}$ 
and 
$$
\mu(A_n^+)- \epsilon/3< \mu(A)< \mu(A_n^-)+ \epsilon/3, 
$$
hold.

Such sets exist by the $C^0-$convergence of  (strong stable/center-unstable) foliations  for $f_n$ to the respective foliations for $f$.

On the one hand,  $\exists n_1 \geq n_2$ such that
$$
\mu( A)- \mu_n(A) \leq \mu(A)- \mu_n(A) \leq \mu(A)- \mu_n(A_n^-)= \nu(B)- \nu_n(B^-) \leq \frac{2  \epsilon}{3}, 
$$
for all $n \geq n_1$, as $\nu_n(B^-) \to \nu(B^-)$ by the statistical stability for  
$g$ proved in  \cite{CV13}.

In the same manner, we prove the other inequality, implying there exists $n_0 \geq n_1$ such that
$$
|\mu(A)- \mu_n(A)|< \epsilon, \forall n \geq n_0.
$$

The same arguments also are valid for the case $k> 0$.

This finishes the lemma. 
 
\end{proof}

\begin{theorem}
Given $\varphi: M \to \re$ a continuous function,  then $\ds\int_M \varphi d\mu_n \to \int_M \varphi d\mu$.
\end{theorem}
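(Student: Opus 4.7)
The plan is to reduce this statistical stability statement to the lemma by a standard uniform-approximation argument: I will approximate $\varphi$ in sup-norm by simple functions built from the neighborhood basis $\hat\cC$, and then invoke the pointwise convergence $\mu_n(\hat A) \to \mu(\hat A)$ for each $\hat A \in \hat\cC$ provided by the lemma.

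Fix $\varepsilon > 0$. Since $M$ is compact and $\varphi$ is continuous, $\varphi$ is uniformly continuous, so there is $\delta > 0$ with $|\varphi(x)-\varphi(y)| < \varepsilon$ whenever $d(x,y) < \delta$. Because $\hat\cC$ is a neighborhood basis for $\Lambda$ and, as noted just before the lemma, finite unions of sufficiently high iterates $f^k(\cup_{x \in N} M_x)$ eventually contain every $\Lambda_n$ for $n$ large, I can cover an open set $\Omega \supset \Lambda \cup \bigcup_{n \geq n_0} \Lambda_n$ by finitely many elements $\hat A_1, \dots, \hat A_k$ of $\hat\cC$, each of diameter less than $\delta$. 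Disjointifying by $P_i := \hat A_i \setminus (\hat A_1 \cup \cdots \cup \hat A_{i-1})$ and picking sample points $x_i \in P_i$, I form the simple function $s := \sum_{i=1}^k \varphi(x_i)\,\chi_{P_i}$, for which $\|\varphi - s\|_\infty \leq \varepsilon$ on $\Omega$; since $\Omega$ contains the supports of $\mu$ and of $\mu_n$ for $n \geq n_0$, this sup-bound suffices for the estimate below.

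The conclusion then follows from the triangle inequality
$$
\left|\int \varphi\, d\mu_n - \int \varphi\, d\mu\right| \leq 2\varepsilon + \sum_{i=1}^k |\varphi(x_i)| \cdot |\mu_n(P_i) - \mu(P_i)|,
$$
provided I can show $\mu_n(P_i) \to \mu(P_i)$ for each $i$, for then letting $\varepsilon \to 0$ yields the theorem. The main obstacle is therefore to upgrade the lemma from $\hat\cC$ to the Boolean algebra it generates. I would handle this as follows: for $\hat A = f^k(\bigcup_{y\in B} M_y)$ and $\hat A' = f^{k'}(\bigcup_{y\in B'} M_y)$, applying the bijection $f^{-\min(k,k')}$ on $\Lambda$ (which preserves both $\mu$ and $\mu_n$) rewrites $\hat A \cap \hat A'$ as a fibered set of the form $\bigcup_{y \in \tilde B} M_y$ over an intersection of open $\nu$-continuity sets in $N$, whose boundary is still $\nu$-null because the boundaries of the two base sets are. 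Repeating the proof of the lemma with such fibered sets, and invoking the statistical stability $\nu_n \to \nu$ from \cite{CV13} applied to $\tilde B$, extends the convergence to all finite intersections, hence by inclusion–exclusion to each $P_i$, and completes the proof.
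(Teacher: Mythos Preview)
Your reduction has two genuine gaps. First, $f^{-\min(k,k')}$ does not preserve $\mu_n$: the measure $\mu_n$ is $f_n$-invariant, not $f$-invariant, so any pull-back must use $f_n^{-k}$ on the $\mu_n$ side and the $C^1$-closeness $f_n\to f$ to control the mismatch---this is exactly the sandwiching with $B^\pm$ carried out in the lemma. Second, and more seriously, your claim that $f^{-\min(k,k')}(\hat A\cap\hat A')$ is a saturated set $\bigcup_{y\in\tilde B}M_y$ fails when $k\neq k'$: if $k<k'$ then $f^{-k}(\hat A')=f^{\,k'-k}\bigl(\Pi_\Lambda^{-1}(B')\bigr)$, and since $g$ has degree $p>1$ while $f$ is a bijection on $\Lambda$, the forward image $f(\Pi_\Lambda^{-1}(y))$ is only one of the $p$ disjoint pieces composing $\Pi_\Lambda^{-1}(g(y))$. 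Forward iterates of $\Pi_\Lambda$-saturated sets are therefore \emph{not} saturated, and your inclusion--exclusion step does not reduce to the lemma. Both problems disappear if you build the cover with a \emph{single common} iterate $k$: fix $k$ large so that $f^k$-images of stable leaves have diameter $<\delta/2$, then cover $N$ by $\nu$-continuity balls $B_i$ small enough that each $\hat A_i=f^k\bigl(\bigcup_{y\in B_i}M_y\bigr)$ has diameter $<\delta$; now any Boolean combination pulls back (via $f^{-k}$, resp.\ $f_n^{-k}$) to a fibered set over a Boolean combination of the $B_i$'s, whose boundary is still $\nu$-null, and the lemma's proof applies verbatim.

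The paper sidesteps the Boolean-algebra issue altogether by using a continuous partition of unity $\{\psi_j\}$ subordinate to the cover $\{C_j\}$ and the approximant $\hat\varphi=\sum_j\varphi(x_j)\psi_j$, so no disjointification---and hence no intersection of iterates---is ever needed.
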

\begin{proof}

Let $\epsilon> 0$ given, and the $\delta> 0$ we obtain by the uniform continuity of  $\varphi$ 
associated to $\epsilon/9$. Take a covering $\ds\cup_{j= 1}^k C_j$,  $C_j \in \cC$ de  
$\Lambda$, with diameter less then $\delta/3$. There is also  $n_0$ such that $\ds\cup_{j= 1}^k C_j \supset \Lambda_n$,
$\forall n \geq n_0$. In particular, $\mu_n(M \setminus \ds\cup_{j= 1}^k C_j)= 0$, $\forall n \geq n_0$.

Consider a partition of unity  $\{\psi_j, j= 1, \dots, k\}$ associated to $\ds\cup_{j= 1}^k C_j$.

For each  $C_j$, take $x_j \in C_j$ and set
$
\hat \varphi:= \sum_{j= 1}^k \varphi(x_j) \psi_j.
$

Therefore, $\|\varphi - \hat\varphi\|_\infty< \epsilon/3$.

Now, take $n_1 \geq n_0$ such that 
$$
|(\mu_n- \mu)(C_j)|< \frac{\epsilon}{3k \|\varphi\|_\infty}, \forall n \geq n_1.
$$

So, we conclude that 
$$
\modulo{\int_M \varphi d\mu_n - \int_M \varphi d\mu} \leq
\modulo{\int_M \varphi d\mu_n - \int_M \hat\varphi d\mu_n} + 
\modulo{\int_M \hat \varphi d\mu_n - \int_M \hat \varphi d\mu}+ 
$$
$$
\modulo{\int_M \varphi d\mu - \int_M \hat\varphi d\mu} \qquad
$$
$$
\leq \|\varphi- \hat \varphi\|_\infty + \sum_{j= 1}^k \|\varphi\|_\infty |\mu_n(C_j)- \mu(C_j)|+ \|\varphi- \hat \varphi\|_\infty < \epsilon, 
\forall n \geq n_0. 
$$

\end{proof}

\section{Cones and Projective Metrics}\label{s.cones}

We recall here some necessary results in Projective Metrics defined in Cones whose proofs can be found in [Li95,Ba00,Vi95].

Given a linear space $E$ we say that $C \subset E \backslash \{0\}$ is a convex cone if 
$$
t>0 \textrm{ and } v \in C \Rightarrow t \cdot v \in C.
$$
and
$$
t_1, t_2 > 0 \text{ and } v_1, v_2 \in C \Rightarrow t_1 \cdot v_1+ t_2 \cdot v_2 \in C.
$$

We define  $\ov C$ to be the set of points $w \in E$ such that there exists $v \in C$ and a sequence of positive numbers $\l(t_n\r)_{n \in \N}$, going to zero, such that $w+t_n\cdot v \in C$ forall  $n \in \N$. We will only consider the so called {\em projective cones}, such that  
$$
\fecho{C} \cap \l(-\fecho{C}\r)=\{0\}.
$$

We then define
$$
\alpha_C(v,w)=\sup\l\{t>0; w-t\cdot v \in C\r\}
$$
and
$$
\beta_C(v,w)=\inf\l\{s>0; s\cdot v-w \in C\r\}.
$$

We convention $\sup \emptyset=0$ and $\inf \emptyset=+\infty$. The projective metrics associated to $C$ is given by
$$
\theta(v,w)=\log\f{\beta_C(v,w)}{\alpha_C(v,w)}.
$$

Indeed, 
\begin{proposition} Given a projective cone $C$ then $\theta(\cdot,\cdot): \fecho{C} \times \fecho{C} \to [0,+\infty]$ is a metrics in the projective space of  $C$, that is,
\begin{itemize}
	\item $\theta(v,w)=\theta(w,v)$.
	\item $\theta(u,w) \leq \theta(u,v)+\theta(v,w)$.
	\item $\theta(v,w)=0$ iff there exists $t>0$ such that $v=t\cdot w$.
\end{itemize}
\end{proposition}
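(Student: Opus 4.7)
The plan is to prove the three stated properties in the order: symmetry, triangle inequality, and characterization of zero distance, preceded by the basic observation that $\theta$ takes values in $[0,+\infty]$. All of these will follow from a single key computation exploiting that $C$ is a convex cone satisfying $\overline{C} \cap (-\overline{C}) = \{0\}$.

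First I would verify the duality $\alpha_C(v,w)\cdot\beta_C(w,v) = 1$ and $\beta_C(v,w)\cdot\alpha_C(w,v) = 1$, obtained by the change of variable $t = 1/s$ applied to the defining sets. Indeed, since $C$ is a cone, $sv - w \in C$ is equivalent to $v - (1/s)w \in C$, from which $\beta_C(v,w)^{-1} = \alpha_C(w,v)$ follows immediately, and similarly for the other identity. Taking the logarithm of $\beta_C(v,w)/\alpha_C(v,w)$ then gives symmetry at once.

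For the triangle inequality, the main estimate is the submultiplicativity
\[
\alpha_C(u,w) \;\geq\; \alpha_C(u,v)\,\alpha_C(v,w), \qquad \beta_C(u,w) \;\leq\; \beta_C(u,v)\,\beta_C(v,w).
\]
To see the first, I pick $a < \alpha_C(u,v)$ and $b < \alpha_C(v,w)$, so that $v - au \in C$ and $w - bv \in C$; convexity of the cone then gives
\[
w - ab\,u \;=\; (w - bv) + b\,(v - au) \;\in\; C,
\]
so $ab \leq \alpha_C(u,w)$, and I conclude by taking suprema. The second inequality is entirely analogous. Dividing the inequalities and taking logarithms produces $\theta(u,w) \leq \theta(u,v) + \theta(v,w)$. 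The same algebraic trick, applied to $a < \alpha_C(v,w)$ and $b > \beta_C(v,w)$, gives $(b-a)v = (bv - w) + (w - av) \in C$; since $v \in \overline{C}$ is nonzero and $\overline{C} \cap (-\overline{C}) = \{0\}$, this forces $b \geq a$, hence $\alpha_C(v,w) \leq \beta_C(v,w)$, so $\theta \geq 0$.

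Finally, to characterize the zero case, I would note that if $v = tw$ with $t>0$ then direct substitution yields $\alpha_C(v,w) = \beta_C(v,w) = 1/t$, hence $\theta(v,w)=0$. For the converse, assume $\alpha_C(v,w) = \beta_C(v,w) = t \in (0,+\infty)$. Taking sequences $t_n \nearrow t$ with $w - t_n v \in C$ and $s_n \searrow t$ with $s_n v - w \in C$, the relations $(w - tv) + (t - t_n) v \in C$ and $(tv - w) + (s_n - t) v \in C$ show, via the paper's definition of $\overline{C}$, that both $w - tv$ and $tv - w$ lie in $\overline{C}$; the projectivity hypothesis $\overline{C} \cap (-\overline{C}) = \{0\}$ then forces $w = tv$. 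The main subtlety, and what I expect to require the most care, is precisely this last limiting argument: the sup and inf in the definitions of $\alpha_C$ and $\beta_C$ need not be attained in $C$, so one must pass from the open condition (membership in $C$) to the closed one (membership in $\overline{C}$) using the paper's algebraic notion of closure rather than a topological closure, and handle the case $v \in \overline{C} \setminus C$ by a further auxiliary approximation.
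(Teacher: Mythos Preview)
Your proof is correct and follows the standard route (duality $\beta_C(v,w)=1/\alpha_C(w,v)$ for symmetry, submultiplicativity of $\alpha_C$ and $\beta_C$ via the convex-cone identity for the triangle inequality, and the approximation argument landing in $\overline{C}\cap(-\overline{C})$ for the characterization of $\theta=0$). Note, however, that the paper does not actually prove this proposition: it is stated in the preliminary section on cones with the remark that the proofs ``can be found in [Li95, Ba00, Vi97]'', and no argument is given in the paper itself. Your write-up is essentially the argument one finds in those references, so there is nothing to compare against within the paper.
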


The proof of the following essential result can be found  in \cite[Proposition~2.3]{Vi97}.

\begin{theorem}\label{t.Birkhoff}
Let $E_1$ and $E_2$ be linear spaces and  let $C_1 \subset E_1$ and $C_2 \subset E_2$ be projective cones. If $L:E_1 \to E_2$ is a linear  operador such that $L( C_1)\subset C_2$ and 
$$D=\sup\l\{\theta_2(L(v),L(w)); v,w \in C_1\r\}<\infty$$
then
\begin{equation*}
\theta_2 (L(v),L(w)) \leq \left( 1 - e^{-D} \right)\theta_1(v,w),
\end{equation*}
for all $v,w\in C_1$.
\end{theorem}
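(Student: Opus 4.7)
The plan is to decompose $Lw$ as $Lv + L(w-v)$ and reduce the theorem to an elementary scalar inequality, combining the automatic monotonicity of $\theta_i$ under $L$ with the finite-diameter hypothesis.

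I would first restrict to the case $0 < \theta_1(v,w) < \infty$ (the cases $\theta_1(v,w) \in \{0, +\infty\}$ are trivial). Using the projective invariance of $\theta_1$ under positive rescaling of each argument, I rescale $w$ so that $\alpha := \alpha_{C_1}(v,w) = 1$. Writing $\beta := \beta_{C_1}(v,w)$, we then have $w - v \in \overline{C_1}$, $\beta v - w \in \overline{C_1}$, and $\theta_1(v,w) = \log \beta$.

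Setting $p := Lv \in C_2$ and $q := L(w - v) \in \overline{C_2}$, so that $Lw = p + q$, a direct computation from the definitions (splitting on whether $t \le 1$ or $t > 1$) gives
$$\alpha_{C_2}(p, p+q) = 1 + \alpha_{C_2}(p,q), \qquad \beta_{C_2}(p, p+q) = 1 + \beta_{C_2}(p,q),$$
hence
$$\theta_2(Lv, Lw) = \log \frac{1 + \beta_{C_2}(p,q)}{1 + \alpha_{C_2}(p,q)}.$$
Two complementary bounds on the pair $(\alpha_{C_2}(p,q), \beta_{C_2}(p,q))$ are now available. The trivial cone-preservation, applied to $(s+1)v - w \in \overline{C_1}$ for $s \geq \beta - 1$, yields $\beta_{C_2}(p,q) \leq \beta_{C_1}(v, w-v) = \beta - 1$; the finite-diameter hypothesis $\theta_2(p,q) \leq D$ translates into $\alpha_{C_2}(p,q) \geq e^{-D}\beta_{C_2}(p,q)$. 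Since $x \mapsto (1+x)/(1+xe^{-D})$ is increasing on $(0, +\infty)$, these combine into
$$\theta_2(Lv, Lw) \leq \log \frac{\beta}{1 + (\beta - 1)e^{-D}}.$$

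To close, I would invoke the elementary inequality $\beta^t \leq 1 + t(\beta - 1)$ valid for $\beta \geq 1$ and $t \in [0,1]$, which follows from the convexity of $\varphi(t) = \beta^t - 1 - t(\beta - 1)$ together with $\varphi(0) = \varphi(1) = 0$. Taking $t = e^{-D}$ and applying $\log$ gives
$$\log \frac{\beta}{1 + (\beta - 1)e^{-D}} \leq (1 - e^{-D}) \log \beta = (1 - e^{-D})\, \theta_1(v,w),$$
which is the desired contraction. The main obstacle is making the finite-diameter hypothesis interact meaningfully with the pointwise distance $\theta_2(Lv, Lw)$: one would like to invoke it on the pair $(v, w-v)$, but $\theta_1(v, w-v)$ is typically infinite because $\alpha_{C_1}(v, w-v) = 0$. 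The decomposition $Lw = Lv + L(w-v)$ bypasses this obstruction by placing both $p$ and $q$ inside $\overline{L(C_1)} \subset \overline{C_2}$, where the uniform bound $D$ applies; without this step, cone-preservation alone would only yield the non-contractive estimate $\theta_2(Lv, Lw) \leq \theta_1(v, w)$.
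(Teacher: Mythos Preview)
Your proof is correct and is essentially the standard argument for Birkhoff's cone contraction theorem. Note that the paper does not give its own proof of this result: it simply cites \cite[Proposition~2.3]{Vi97}. Your argument is precisely the one found there (and in Liverani's and Baladi's expositions), organized around the affine identity $\alpha_{C_2}(p,p+q)=1+\alpha_{C_2}(p,q)$, $\beta_{C_2}(p,p+q)=1+\beta_{C_2}(p,q)$, followed by the convexity inequality $\beta^{t}\le 1+t(\beta-1)$ for $t\in[0,1]$.

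One small technical point deserves a sentence of justification. When you invoke the finite-diameter bound $\theta_2(p,q)\le D$, you are applying the hypothesis to the pair $p=Lv$, $q=L(w-v)$; but the hypothesis is stated only for images of elements of $C_1$, and after your normalization $\alpha_{C_1}(v,w)=1$ the vector $w-v$ lies in $\overline{C_1}$, not necessarily in $C_1$. The fix is immediate: for $t<1$ one has $w-tv\in C_1$, hence $\theta_2\big(Lv,\,L(w-tv)\big)\le D$; writing $q_t:=L(w-tv)=q+(1-t)p$ one checks $\alpha_{C_2}(p,q_t)=(1-t)+\alpha_{C_2}(p,q)$ and $\beta_{C_2}(p,q_t)=(1-t)+\beta_{C_2}(p,q)$, so letting $t\uparrow 1$ recovers $\alpha_{C_2}(p,q)\ge e^{-D}\beta_{C_2}(p,q)$. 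With this clarification your argument is complete.
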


\section{Ruelle-Perron-Frobenius Operator and Invariant Cones}

We recall that the main goal of this work is to deduce good statistical properties of the maximal entropy probability measure associated to the dynamics $f$. The technique presented  use the Ruelle-Perron-Frobenius operator(for simplicity called transfer operator) and its  duality with the Koopman operator, $U(\fhi)=\fhi \circ f$, to obtain the exponential decay of correlations and consequently the central limit theorem.

However, this technique may also be useful to prove exponential decay of correlations and consequently the central limit theorem for more general equilibrium states, 
not just particularly for measures of maximum entropy. We recall that given a map $f: \Lambda \to \Lambda$, and a fixed potential $\phi: \Lambda \to \R$, we say that a measure $\eta$ is an equilibrium state for $f$ with respect to $\phi$ if
$$
h_{\eta}(f)+ \int{\phi}d\eta=\sup\l\{h_{\mu}(f)+\int{\phi d\mu}; \mu \text{ is an $f$-invariant probability}\r\}.
$$
That is, the variational principle tells us that $\eta$ carries out the topological pressure $P(f,\phi)$. The reader can easily see that in the case where the potential $\phi$ is a constant, 
obtain an equilibrium state is equivalent to obtain a maximum entropy measure. What we do in this section is to obtain some preliminar results,  for  more general potentials  than constant potentials, namely,  low variation potentials. That is, we assume that $\sup \phi -\inf \phi < \varepsilon$
for some small enough $\varepsilon$. Moreover such potential must 
belong to the following cone:
\begin{equation}\label{potencial}
\holder{e^\phi} \leq \varepsilon \inf{e^\phi} 
\end{equation}
where $\holder{e^\phi}=\infimo{}{C>0; |e^\phi(x)-e^\phi(y)|\leq Cd(x,y)^{\alpha}, \forall x,y \in \Lambda}$.
Let $E$ be the space of continuous functions $\fhi:\Lambda \seta \R$. Define the Ruelle-Perron-Frobenius operator  $\L:E \seta E$ given
$$\L(\fhi)(y)=\fhi(f^{-1}(y))e^{\phi(f^{-1}(y))}$$
where $\phi$ satisfies the above conditions.

Our inspiration  is the work developed in \cite{CV13}, where the exponential decay of correlations and other good statistical and regularity properties
are proven for the unique equilibrium state in a nonuniformly expanding context. Castro-Varandas defined suitable cones for the
 Ruelle-Perron-Frobenius (or transfer) operator$\L$, proving the  invariance and  the finite diameter for the  image of such cones by $\L$. 

More precisely, the basic cone used by \cite{CV13} is the cone of H\"older continuous, positive functions  $\fhi$ such that 
$\holder{\fhi} \leq \kappa \inf{\fhi}$. The invariance of such cone by $f$ is due some increase in the regularity given by the contraction of some
inverse branch of $f$.
 In our context, however, we always have backward expansion in stable directions for the points into each strong stable manifold $\Pi^{-1}(y)$ instead of contraction. Since for the case of entropy (potential $\phi \equiv 0$) the transfer operator $\L$,  is just the composition of each observable $\fhi$ with $f^{-1}$, it is obvious that  the H\"older constants of $\L(\fhi)$, can not  better, if one take a cone as in \cite{CV13}.
 
In order to avoid this undesirable effect in stable directions, we will  analyse the action of $\L$ in some kind of averages taken in each stable leaf restricted to the attractor  $\La$. We will write the lowercase letter $\gamma$ to denote a  stable leaf (instersected with $\Lambda$) and $\F^s$ will denote the stable foliation.   

Fixed $y \in N$, let $y_j$ such that $g(y_j)=y$, where $j \in\{1, \cdots, deg(g)\}$. Writing $\gm=\Pi^{-1}_{\Lambda}(y)$ and $\gmj=\Pi^{-1}_{\Lambda}(y_j)$, it follows that $f(\gmj)\subset \gm$, since $\Pi \circ f(x)=g \circ \Pi(x)=g(y_j)=y$, $\forall x \in \gmj$.

Let $\textbf{p}$ be the degree of $g$. Let us construct a family of measures $\l\{\mg\r\}_{\gm \in \F^s}$ supported in $\Lambda$, such that for all $\gmc$, where $f^n\l(\gmc\r) \subset \gm$, we have $\mg\l(f^n\l(\gmc\r)\r)=\f{1}{p^n}$. In particular $\mg(\gm)=1$. Furthermore, for all $\gmj$, with $f(\gmj)\subset \gm$ we will obtain

$$\int_{f(\gmj)}\psi \dmg=\f{1}{p}\intgj{\psi\circ f}.$$

The construction of such family of measures is rather natural. Fix $\gm = \Pi_{\Lambda}^{-1}(y)$ and $n \in \N$, $n> 0$. By setting $\gmj:=\Pi_{\Lambda}^{-1}(y_j)$, where $y_j \in g^{-n}(y)$, one can write $\gm = \ds\dot{\bigcup}_{j=1}^{p^n}f^n(\gmj)$, since $f^n$ is a bijection in  $\Lambda$ and $\Pi \circ f^n = g^n \circ \Pi$. Therefore, $\l\{f^n(\gmj)\r\}_{j=1}^{p^n}$ is a sequence of partitions in $\gm$. As $\gmj=\Pi_{\Lambda}^{-1}(y_j)$ and $f^n:M_{y_j} \to M_{g^n(y_j)}$ is a $\lambda_s^n$-contraction it follows that the diameter of $\l\{f^n(\gmj)\r\}_{j=1}^{p^n}$ goes to  zero. So, we just define $\mg$ in the elements of such partition by  mass distribution
$$
\mg(f^n(\gmj))=\f{1}{p^n}
$$

\begin{figure}
\includegraphics[width=10cm]{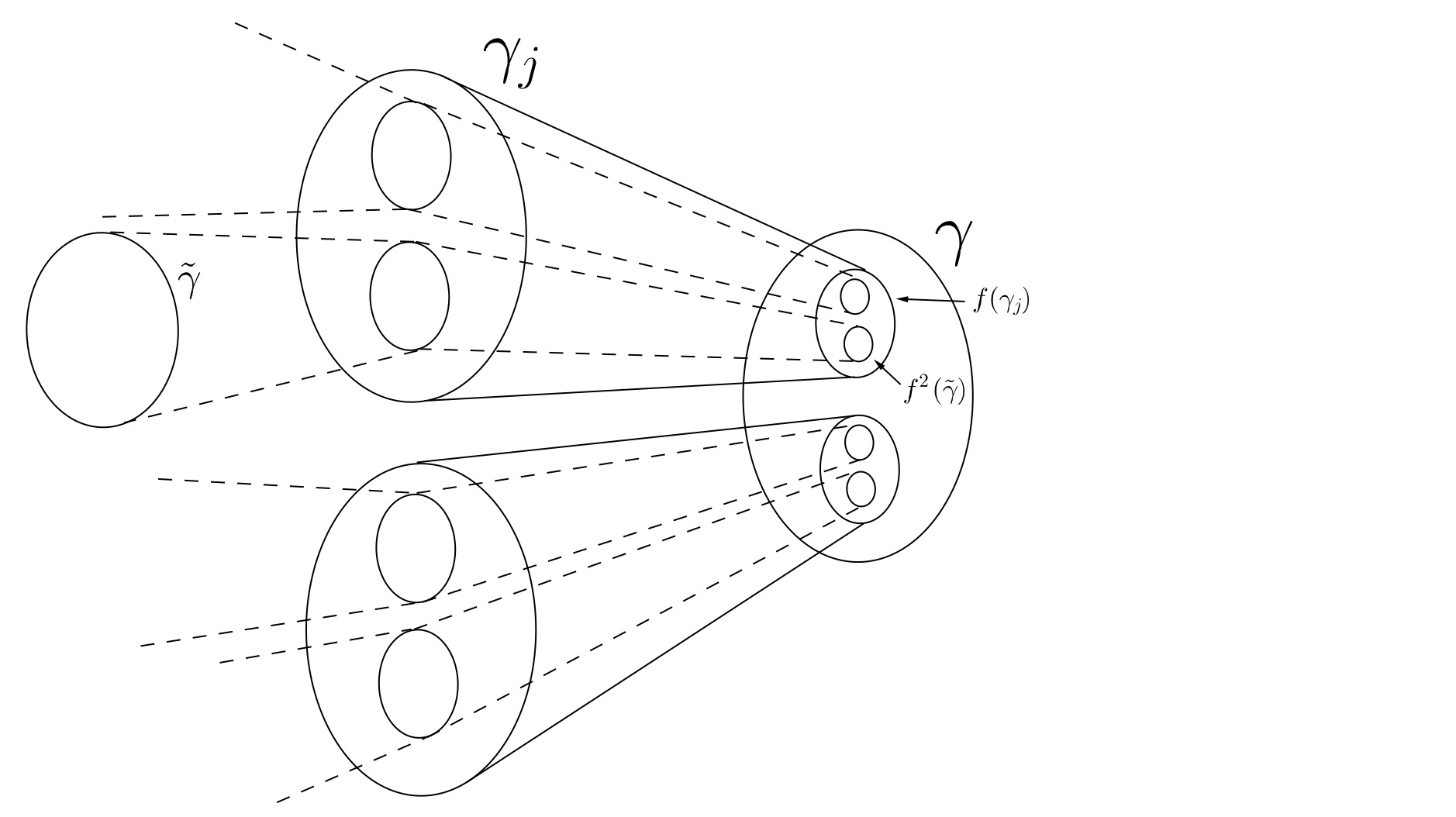}
\caption{Mass distribution}
\label{Distribuicao de massa}
\end{figure}
and extend  $\mg$ by approximation to any Borelian $A \subset \Lambda$. 

If $\gmj=\Pi_{\Lambda}^{-1}(x_j)$,  $x_j \in g^{-1}(x)$, then 
$$
\mg(A)=\mg(A \cap \gm)=\mg\l(A \cap \ds\bigcup_{j=1}^{p}f(\gm_{j})\r)=\mg\l(\ds\bigcup_{j=1}^{p}\l(A \cap f(\gmj)\r)\r)=\ds\sum_{j=1}^{p}\mg(A \cap f(\gmj))
$$
Seting $\mgj(A):=p \cdot \mg(f(A \cap \gm_j))$ we obtain $\mg(A \cap f(\gmj))=\f{1}{p}\mgj(f^{-1}(A))$ and so 
$$
\mg(A)=\f{1}{p}\ds\sum_{j=1}^{p}\mgj(f^{-1}(A)).
$$
We conclude that for any measurable set $A$,  its indicator function $\chi_A$ satisfies
$$
\ds\int_{f(\gmj)}\chi_A\dmg=\f{1}{p}\intgj{\chi_A \circ f}
$$
By Lebesgue Dominated Convergence Theorem,  for any $g:\Lambda \to \R$ continuous we have  
\begin{equation} \label{MudancaDeVariavel}
\ds\int_{f(\gmj)}g\dmg=\f{1}{p}\intgj{g \circ f}.
\end{equation}
Note also that for all  $\gmc$,  $f^n\l(\gmc\r) \subset \gm$, we have $\mg\l(f^n\l(\gmc\r)\r)=\f{1}{p^n}$. So 
it follows that  for all $\gmt$ such that $f^n\l(\gmt\r) \subset \gmj$ and  $f\l(\gmj\r) \subset \gm$
$$
\mgj\l(f^n\l(\gmt\r)\r)= \,p\mg\l(f\l(f^n\l(\gmt\r) \cap \gmj\r)\r)=\, p \mg\l(f^{n+1}(\gmt)\r)= \,  \f{p}{p^{n+1}}= \, \f{1}{p^n}
$$
holds. 

That is, $\mgj$ is the mass distribution measure constructed for $\gmj$.

Moreover, for $y \in N$  and $y_j$ such that $g(y_j)=y$, $j \in\{1, \cdots, p\}$ if we consider  $\gm=\Pi^{-1}_{\Lambda}(y)$ and $\gmj=\Pi^{-1}_{\Lambda}(y_j)$,  $f(\gmj)\subset \gm$, then $\gm=\dot{\bigcup}_{j=1}^{p}f(\gmj)$. Therefore, for all measurable bounded 
 function $\psi:\gm \to \R$ it follows that
$$
\int_{\gm} \psi d\mu_\gm=\sumjap\int_{f(\gm_j)}\psi d\mu_\gm.
$$
For $\rho:\gm \seta \R$, we conclude that 
$$
\intg{\L(\fhi)\rho}
=\sumjap\int_{f(\gm_j)}\L(\fhi)\rho \dmg
=\sumjap\f{1}{p}\intgj{\fhi \cdot e^\phi \cdot \rho\circ f}
$$
defining $\rj:=\f{1}{p}\rho\circ f e^{\phi}$, we have
$$
\intg{\L(\fhi)\rho} =\sumjap \intgj{\fhi\rj}.
$$
We will study the action of the transfer operator in the strong stable leaves  via its action on the integrals of densities in a suitable cones of functions which are defined in each strong stable leaf.
More precisely, for each $\gm \in \mathcal{F}^s$ we define the auxiliary cone of H\"older continuous functions
$$
\D(\gm,\kappa):= \{ \rho:\gm \seta \rho > 0 \text{ and } |\rho|_{\alpha}< \kappa \inf{\rho}\},
$$
with $|\rho|_{\alpha}=\infimo{}{C>0; |\rho(x)-\rho(y)|\leq Cd(x,y)^{\alpha}, \forall x,y \in \gm} $. 

Note that for  $\rho$  in a cone  $\D(\gm,\kappa)$ we have  $\sup{\rho} \leq \inf{\rho\l(1+\kappa \cdot diamM^\alpha\r)}$. 

The next lemma is about the invariance of the auxiliary cones under the action of the transfer operator. 

\begin{lemma}\label{invconeaux}
There exist sufficiently small  $0<\lambda<1$  and $\kappa>0$, such that the following itens hold:
\begin{enumerate}

\item If $\rho \in \D(\gm,\kappa)$ then $\rho_j \in \D(\gm_j,\lambda\kappa)$ for all  $j \in\{1, \ldots,p\}$. 
\item For all $\gm \in \F_{loc}^s$,  if $\rho,\rc \in \D(\gm,\lambda\kappa)$ then $\theta(\rho,\rc)\leq 2\log\l(\f{1+\lambda}{1-\lambda}\r)$. 
\item If $\rl,\rll \in \D(\gm,\kappa)$ then there exists $\Lambda_1 =1-\l(\f{1-\lambda}{1+\lambda}\r)^2$ such that  $\theta_j(\rl_j,\rll_j)\leq \Lambda_1\theta(\rl,\rll)$ for all $j \in\{1, \ldots,p\}$;

\end{enumerate}
where $\theta_j$ and $\theta$ are  the projective metrics associated to $\D(\gmj,\kappa)$ and $\D(\gm,\kappa)$, respectively.
\end{lemma}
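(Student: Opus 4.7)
My plan is to prove items (1) and (2) by direct estimates and then deduce (3) from Birkhoff's theorem (Theorem~\ref{t.Birkhoff}) together with the obvious linearity of the assignment $\rho \mapsto \rj$.

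For (1), I would compute from the formula $\rj(x) = \f{1}{p}\rho(f(x))e^{\phi(x)}$ using the add--subtract identity
\[
\rj(x) - \rj(y) = \f{1}{p}\bigl[e^{\phi(x)}(\rho(f(x)) - \rho(f(y))) + \rho(f(y))(e^{\phi(x)} - e^{\phi(y)})\bigr].
\]
The first summand is bounded by $\lambda_s^\alpha \kappa \inf\rho \cdot \sup e^\phi \cdot d(x,y)^\alpha$, using the contraction $d(f(x),f(y)) \leq \lambda_s d(x,y)$ along the stable leaf and the cone hypothesis $\holder{\rho} \leq \kappa\inf\rho$; the second is bounded by $\sup\rho \cdot \vep\inf e^\phi \cdot d(x,y)^\alpha$ from the potential condition (\ref{potencial}). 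Dividing by the lower bound $\inf\rj \geq \f{1}{p}\inf\rho \inf e^\phi$ and using the distortion estimate $\sup\rho \leq (1 + \kappa\diam(M)^\alpha)\inf\rho$ together with $\sup e^\phi/\inf e^\phi \leq 1 + C\vep$ from (\ref{potencial}) yields
\[
\holder{\rj}/\inf\rj \leq \kappa \lambda_s^\alpha (1 + C\vep) + \vep(1 + \kappa\diam(M)^\alpha).
\]
Since $\lambda_s^\alpha < 1$, one first picks $\lambda \in (\lambda_s^\alpha, 1)$, then $\kappa$ moderate, and finally $\vep$ small so that the right-hand side is at most $\lambda\kappa$ uniformly in $j$.

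For (2), I would exploit the linear description of $\D(\gm,\kappa)$. The condition $\holder{\rho} \leq \kappa\inf\rho$ is equivalent to the family of linear inequalities $\ell_{x,y,z}(\rho) := \rho(x) - \rho(y) - \kappa d(x,y)^\alpha \rho(z) \leq 0$ for all $x,y,z$, together with $\rho>0$. If $\rho,\rc \in \D(\gm,\lambda\kappa)$ then every $\ell_{x,y,z}(\rho)$ is strictly negative with controlled slack:
\[
|\ell_{x,y,z}(\rho)| \in \kappa d(x,y)^\alpha\bigl[\rho(z) - \lambda\inf\rho,\; \rho(z) + \lambda\inf\rho\bigr],
\]
and similarly for $\rc$. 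Using the elementary formulas
\[
\alpha_C(\rho,\rc) = \min\Bigl(\inf(\rc/\rho),\ \inf_{x,y,z} \f{\ell_{x,y,z}(\rc)}{\ell_{x,y,z}(\rho)}\Bigr),\quad \beta_C(\rho,\rc) = \max\Bigl(\sup(\rc/\rho),\ \sup_{x,y,z} \f{\ell_{x,y,z}(\rc)}{\ell_{x,y,z}(\rho)}\Bigr),
\]
where $C = \D(\gm,\kappa)$, these bracketing bounds yield $\alpha_C(\rho,\rc) \geq \f{1-\lambda}{1+\lambda}\inf(\rc/\rho)$ and $\beta_C(\rho,\rc) \leq \f{1+\lambda}{1-\lambda}\sup(\rc/\rho)$. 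The residual ratio $\sup(\rc/\rho)/\inf(\rc/\rho)$ is controlled by $(1 + \lambda\kappa\diam(M)^\alpha)^2$, and choosing $\kappa$ small enough makes this factor negligible, producing the stated bound $\theta(\rho,\rc) \leq 2\log\f{1+\lambda}{1-\lambda}$.

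Finally, (3) is immediate from (1), (2) and Theorem~\ref{t.Birkhoff}: the linear map $\rho \mapsto \rj$ sends $\D(\gm,\kappa)$ into $\D(\gmj,\lambda\kappa) \subset \D(\gmj,\kappa)$ by (1), and the image has $\theta_j$-diameter at most $D := 2\log\f{1+\lambda}{1-\lambda}$ by (2). Birkhoff's contraction principle then delivers
\[
\theta_j(\rlj,\rllj) \leq (1 - e^{-D})\theta(\rl,\rll) = \Bigl(1 - \Bigl(\f{1-\lambda}{1+\lambda}\Bigr)^{\!2}\Bigr)\theta(\rl,\rll) = \Lambda_1\,\theta(\rl,\rll).
\]
The main obstacle is the joint calibration of the three parameters $\lambda,\kappa,\vep$ in (1): one must arrange strict invariance $\rj \in \D(\gmj,\lambda\kappa)$ with a single $\lambda<1$ independent of $j$, while keeping $\kappa$ small enough that the distortion correction in (2) fits cleanly inside $2\log\f{1+\lambda}{1-\lambda}$. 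This balancing is delicate because $\lambda$ is constrained from below by $\lambda_s^\alpha$ and from above by $1$, and the potential's H\"older norm competes with the backward expansion on stable leaves.
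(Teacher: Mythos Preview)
Your proof is correct and follows the same overall architecture as the paper: direct H\"older estimate for (1), a finite-diameter bound for (2), and Birkhoff's contraction (Theorem~\ref{t.Birkhoff}) for (3). Parts (1) and (3) are essentially identical to the paper's argument, modulo cosmetic differences in how the product rule for $|\rho\circ f\cdot e^\phi|_\alpha$ is organized.

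The one genuine methodological difference is in (2). You compute $\alpha_C,\beta_C$ directly from the linear-functional description of $\D(\gm,\kappa)$, obtaining
\[
\theta(\rho,\rc)\leq 2\log\f{1+\lambda}{1-\lambda}+\log\f{\sup(\rc/\rho)}{\inf(\rc/\rho)},
\]
and then argue that the residual ratio, bounded by $(1+\lambda\kappa\,\diam(M)^\alpha)^2$, is ``negligible''. The paper instead uses a triangle-inequality reduction to $\theta(1,\rho)$: normalizing $\inf\rho=1$, one checks by hand that $\rho-(1-\lambda)$ and $(1+\lambda)-\rho$ both lie in $\D(\gm,\kappa)$ (the second requiring $\kappa\,\diam(M)^\alpha<\lambda$), giving $\alpha_C(1,\rho)\geq 1-\lambda$ and $\beta_C(1,\rho)\leq 1+\lambda$ exactly, hence $\theta(\rho,\rc)\leq 2\theta(1,\cdot)\leq 2\log\f{1+\lambda}{1-\lambda}$ with no residual term. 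The paper's route is cleaner for the stated constant; your route is also valid but, strictly speaking, yields $2\log\f{1+\lambda}{1-\lambda}$ only after absorbing the residual into a slightly enlarged $\lambda$, which is harmless for the application in (3). Your closing remark about the joint calibration of $\lambda,\kappa,\vep$ is exactly right and matches the paper's constraint $\kappa>\vep/(1-\lambda_s^\alpha e^\vep-\diam(M)^\alpha\vep)$ from (1) together with $\kappa\,\diam(M)^\alpha<\lambda$ from (2).
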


\begin{proof}

(1) In our context we suppose $\sup\phi-\inf\phi < \varepsilon$ and $\holder{e^\phi}<\varepsilon \inf e^\phi$. Therefore
$$
\begin{array}{rcl}
\f{|\rho_j|_\alpha}{\inf\l\{\rho_j\r\}}&=&\f{\l|\f{1}{p}\rho\circ f \cdot e^\phi\r|_\alpha}{\inf\l\{\f{1}{p}\rho\circ f \cdot e^\phi\r\}} =\,\f{|\rho\circ f \cdot e^\phi|_\alpha}{\inf\l\{\rho\circ f \cdot e^\phi\r\}}\\
& \leq & \f{|\rho \circ f|_\alpha \cdot e^{\sup \phi} + \sup \l\{\rho \circ f\r\} \cdot |e^\phi|_\alpha}{\inf \rho \cdot e^{\inf \phi}}\\
& \leq & \f{\lambda_s^{\alpha} \kappa \inf \rho \cdot e^{\sup \phi}}{\inf \rho \cdot e^{\inf \phi}} + \f{(1+\kappa \cdot diamM^\alpha)\inf \rho \cdot |e^\phi|_\alpha}{\inf \rho \cdot e^{\inf \phi}}\\
& \leq & \lambda_s^{\alpha}\kappa e^\varepsilon + (1 + \kappa \cdot diam  M^\alpha) \varepsilon  = (\lambda_s^{\alpha} e^\varepsilon + diam  M^\alpha \epsilon)\kappa+\varepsilon\\
\end{array}
$$
In order to guarantee a $0<\lambda <1$ such that 
$$
(\lambda_s^{\alpha} e^\varepsilon + diam  M^\alpha \varepsilon)\kappa+\varepsilon < \lambda \kappa
$$
it is sufficient to obtain 
$$
\f{(\lambda_s^{\alpha} e^\varepsilon + diamM^\alpha \varepsilon)\kappa+\varepsilon}{\kappa} < \lambda <1.
$$
For that we just need 
$$
\f{(\lambda_s^{\alpha} e^\varepsilon + diamM^\alpha \varepsilon)\kappa+\varepsilon}{\kappa} < 1 
$$
or, equivalently, 
\begin{equation} \label{PrimeiraCond}
\kappa > \f{\varepsilon}{1-(\lambda_s^{\alpha} e^\varepsilon+diamM^\alpha\varepsilon)}.
\end{equation}
Note that  $\lambda$ and $\kappa$ can be  chosen in order to satisfy the above equation 
since we  choose in our hipothesis $\varepsilon>0$ and $0<\lambda_s<1$
suitably small.

(2) By a triangular argument, it is  sufficient to bound $\theta(1,\rho)$ for $\rho \in \D(\gm,\lambda\kappa)$.  There is no loss of generality in assuming that  $\inf \rho=1$.
So, for  $t=1-\lambda$ we have
$$
\f{\holder{\rho-t}}{\inf{(\rho-t)}}=\f{\holder{\rho}}{\inf{\rho}-t}<\f{\lambda\kappa}{1-t}=\f{\lambda\kappa}{\lambda}=\kappa.
$$
Since $\inf \rho=1$ it follows that $\rho-t \geq \inf \rho-(1-\lambda)=\lambda > 0$ which guarantees $\alpha(1,\rho) \geq 1-\lambda$. 
On the other hand, by setting $s=1+\lambda$ we obtain
$$
\f{\holder{s-\rho}}{\inf{(s-\rho)}}=\f{\holder{\rho}}{s-\inf{\rho}}<\f{\lambda\kappa}{s-1}=\f{\lambda\kappa}{\lambda}=\kappa.
$$
As $\sup{\rho} \leq \inf{\rho}\l(1+\kappa diamM^{\alpha}\r)=1+\kappa diamM^{\alpha}$ taking $\kappa$ such that $\lambda> \kappa diamM^{\alpha}$, it follows that $s-\rho=1+\lambda-\rho \geq 1+\lambda-\sup\rho \geq 1+\lambda - \l(1+\kappa diamM^{\alpha}\r)>0$. Therefore $\beta(1,\rho) \leq 1+\lambda$. So we conclude that $\theta(\rho,\rc)\leq 2\log\l(\f{1+\lambda}{1-\lambda}\r)$.

Finally, in order to prove  (3) it is sufficient to note that  by item $(1)$ we have  $\rho_j \in \D(\gm_j,\lambda\kappa)$ for all $j \in\{1, \ldots,p\}$ and by item $(2)$ the diameter  $\D(\gm_j,\lambda\kappa)$ in $\D(\gm_j,\kappa)$ is, at most, $2\log\l(\f{1+\lambda}{1-\lambda}\r)$. Therefore, the result goes on by theorem \ref{t.Birkhoff}, considering $\Delta=2\log\l(\f{1+\lambda}{1-\lambda}\r)$ and the linear map
$$
\begin{array}{rcl}
\rho \mapsto \f{1}{p}\rho \circ f e^{\phi}
\end{array}
$$
we have $\theta_j(\rl_j,\rll_j)\leq \Lambda_1\theta(\rl,\rll)$ where
$$
\Lambda_1=1-\ds e^{-\Delta}=1-\l(\f{1-\lambda}{1+\lambda}\r)^2
$$
\end{proof} 
For the definition of the main cone on which we will apply the transfer operator we need to define a notion of distance
between two strong stable  leaves $\gm$ and $\gmt$ in $\F^s$. Given $x,y \in N$ let $\gm=\Pi_{\Lambda}^{-1}(x)$ and $\gmt=\Pi_{\Lambda}^{-1}(y)$. 
Suppose $\pi=\pi_{x,y}: \gmt  \to  \gm$ satisfies 
$$
\begin{array}{rclc}
\intg{\fhi}=\intgt{\fhi\circ\pi}
\end{array}
$$
for all continuous function  $\fhi$ and define the distance 
$d(\gm,\gmt)=\sup\l\{d(\pi(p),p); p \in \gmt \r\}$. 

Now let us define our main cone. Denote by $\Dnorm{\gamma}$ the set of densities
 $\rho \in \D(\gm,\kappa)$ such that $\intg{\rho}=1$.
Given $b>0$, $c>0$ and $\kappa$ as in lemma $\ref{invconeaux}$, let $C(b,c,\alpha)$ be the cone of functions $\fhi \in E$ satisfying for all $\gm \in \F^s$ the following:

\begin{description}

\item[(A)] For all $\rho \in \D(\gm,\kappa)$:
$$ \intg{\fhi\rho} >0$$
\item [(B)] For all $\rl, \rll \in \Dnorm{\gamma}$:

$$\modulo{\intg{\fhi\rl}- \intg{\fhi \rll}} < b \theta\l(\rl,\rll\r) \infimo{\rho \in \Dnorm{\gamma}}{\intg{\fhi\rho}}$$

\item [(C)] Given any $\gmt$ sufficiently close to  $\gm$:

$$\modulo{\intg{\fhi} - \intgt{\fhi}} < c d(\gm,\gmt)^{\alpha}\infimo{\gm}{\intg{\fhi}} $$

\end{description}

The proof of the next Lemma follows from standard arguments.

\begin{lemma}
$C\l(b,c,\alpha\r)$ is a projective cone.
\end{lemma}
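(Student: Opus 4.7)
The plan is to verify the two requirements in the paper's definition of projective cone: that $C(b,c,\alpha)$ is a convex cone (closed under positive scalars and positive linear combinations), and that $\overline{C(b,c,\alpha)} \cap \bigl(-\overline{C(b,c,\alpha)}\bigr) = \{0\}$.

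For the convex cone property, I would exploit the homogeneity of the three defining conditions. If $\varphi \in C(b,c,\alpha)$ and $t > 0$, all three inequalities (A), (B), (C) are homogeneous of degree one in $\varphi$: both sides of (B) and (C) are multiplied by $t$, and (A) remains a strict positivity statement, so $t\varphi \in C(b,c,\alpha)$. For closure under addition, take $\varphi_1, \varphi_2 \in C(b,c,\alpha)$ and $t_1, t_2 > 0$. Positivity (A) for $t_1\varphi_1 + t_2\varphi_2$ follows by linearity of the integral. For (B), the triangle inequality gives
\[
\Bigl| \intg{(t_1\varphi_1+t_2\varphi_2)\rl} - \intg{(t_1\varphi_1+t_2\varphi_2)\rll}\Bigr|
\le \sum_{i=1}^{2} t_i \Bigl| \intg{\varphi_i \rl} - \intg{\varphi_i \rll}\Bigr|,
\]
which is bounded by $b\,\theta(\rl,\rll)\bigl[t_1 \inf_\rho \intg{\varphi_1\rho} + t_2 \inf_\rho \intg{\varphi_2\rho}\bigr]$, and this last bracket is $\le \inf_\rho \intg{(t_1\varphi_1+t_2\varphi_2)\rho}$ since the infimum of a sum dominates the sum of infima. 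The same argument handles (C) verbatim, replacing the projective distance by $d(\gm,\gmt)^\alpha$ and the infimum over densities by the infimum over leaves. Since each individual inequality is strict and one of them is nontrivial whenever $\rho' \neq \rho''$ or $\gm \neq \gmt$, the resulting inequality is strict as well.

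For the projective property, suppose $\varphi \in \overline{C(b,c,\alpha)} \cap (-\overline{C(b,c,\alpha)})$. By definition of $\overline{C}$ there exist $\psi_\pm \in C(b,c,\alpha)$ and positive sequences $t_n^\pm \downarrow 0$ with $\varphi + t_n^+ \psi_+ \in C(b,c,\alpha)$ and $-\varphi + t_n^- \psi_- \in C(b,c,\alpha)$. Applying (A) to these and letting $n\to\infty$, for every $\gm \in \F^s$ and every $\rho \in \D(\gm,\kappa)$ I obtain $\intg{\varphi\rho} \ge 0$ and $\intg{\varphi\rho} \le 0$, hence $\intg{\varphi\rho}=0$. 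Since $\D(\gm,\kappa)$ contains all strictly positive constants (whose Hölder seminorm vanishes), and more generally is sufficiently rich (e.g.\ contains $1 + \epsilon h$ for continuous $h$ with $\epsilon$ small), this forces the continuous function $\varphi$ to vanish on every strong stable leaf $\gm$; since $\Lambda = \bigcup_{\gm \in \F^s}\gm$, I conclude $\varphi \equiv 0$ on $\Lambda$.

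The only delicate point is checking that testing against $\D(\gm,\kappa)$ separates continuous functions on $\gm$: this I would handle by observing that for any continuous $h:\gm\to\R$ and sufficiently small $\epsilon >0$, the function $\rho_\epsilon = 1 + \epsilon(h - \inf h)/(1+\kappa)$ lies in $\D(\gm,\kappa)$ (after normalizing its integral to $1$), so vanishing of $\intg{\varphi\rho}$ on $\D(\gm,\kappa)$ implies $\intg{\varphi\,h}=0$ for all continuous $h$, whence $\varphi|_\gm \equiv 0$. The rest is bookkeeping of strict versus non-strict inequalities, which is the only place where one must be mildly careful.
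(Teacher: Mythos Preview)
The paper omits the proof entirely, stating only that it ``follows from standard arguments''. Your proposal supplies exactly those standard arguments and is correct in outline and in almost all details.

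One small slip: in the separation step you take $\rho_\epsilon = 1 + \epsilon(h-\inf h)/(1+\kappa)$ for \emph{continuous} $h$, but $\mathcal{D}(\gamma,\kappa)$ consists of H\"older functions, so you should take $h$ H\"older. This is harmless: once $\int_\gamma \varphi\,h\,d\mu_\gamma = 0$ for all H\"older $h$, density of H\"older functions in $C(\gamma)$ gives the same for continuous $h$, whence $\varphi\,d\mu_\gamma = 0$ as a measure. Since $\mu_\gamma$ has full support on $\gamma$ (every cylinder $f^n(\gamma_j)$ has mass $p^{-n}>0$ and their diameters shrink to zero) and $\varphi$ is continuous, $\varphi|_\gamma \equiv 0$, and you are done.
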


Now, we have:

\begin{proposition} \label{InvarianciaDoCone}
Let $\phi$ be constant. There exists $0<\sigma<1$ such that $\L(C(b,c,\alpha)) \subset C(\sigma b,\sigma c,\alpha)$ for sufficiently large $b$,$c > 0$.
\end{proposition}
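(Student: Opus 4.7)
The plan is to verify each of the three defining conditions (A), (B), (C) for $\L(\fhi)$, starting from the hypothesis $\fhi\in C(b,c,\alpha)$. The backbone is the identity already established before the proposition,
$$\intg{\L(\fhi)\rho}\;=\;\sum_{j=1}^{p}\intgj{\fhi\,\rj},\qquad \rj:=\frac{1}{p}\,e^{\phi}\,\rho\circ f,$$
which, combined with Lemma~\ref{invconeaux}, transports each cone condition from the preimage leaves $\gm_j$ back to $\gm$. Since $\phi$ is constant, $e^{\phi}$ factors out as a scalar; in particular $\sum_j\intgj{\rj}=e^{\phi}\intg{\rho}$ holds exactly, removing one source of error.

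Condition (A) is immediate: Lemma~\ref{invconeaux}(1) gives $\rj\in \D(\gm_j,\lambda\kappa)\subset \D(\gm_j,\kappa)$, so (A) for $\fhi$ on each $\gm_j$ yields $\intgj{\fhi\,\rj}>0$, and hence $\intg{\L(\fhi)\rho}>0$. For (B), take $\rl,\rll\in\Dnorm{\gm}$ and set $\alpha_j:=\intgj{\rlj}$, $\beta_j:=\intgj{\rllj}$, $\tilde{\rl}_j:=\rlj/\alpha_j$, $\hat{\rho}_j:=\rllj/\beta_j\in\Dnorm{\gm_j}$. Scale invariance of the projective metric and Lemma~\ref{invconeaux}(3) give $\theta_j(\tilde{\rl}_j,\hat{\rho}_j)\leq \Lambda_1\,\theta(\rl,\rll)$ with $\Lambda_1<1$. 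The decomposition
$$\intgj{\fhi(\rlj-\rllj)}\;=\;\alpha_j\bigl(\intgj{\fhi\,\tilde{\rl}_j}-\intgj{\fhi\,\hat{\rho}_j}\bigr)+(\alpha_j-\beta_j)\intgj{\fhi\,\hat{\rho}_j}$$
isolates a principal term, bounded by (B) for $\fhi$ together with the contraction $\Lambda_1$, from a residual proportional to $\alpha_j-\beta_j$. Two elements of $\Dnorm{\gm}$ at projective distance $\theta$ differ pointwise by at most a factor $e^{\theta}$, so $|\alpha_j-\beta_j|\leq(e^{\theta}-1)\beta_j$; moreover $\sum_j(\alpha_j-\beta_j)=0$ (as $\sum_j\alpha_j=\sum_j\beta_j=e^{\phi}$), allowing further cancellation when summing over $j$. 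Comparing denominators via the bounded-ratio estimate $\inf_{\rho\in\Dnorm{\gm}}\intg{\L(\fhi)\rho}\geq c_0\min_j\inf_{\rho\in\Dnorm{\gm_j}}\intgj{\fhi\rho}$ (which follows from $\rj\in\D(\gm_j,\lambda\kappa)$), one reaches
$$\bigl|\intg{\L(\fhi)(\rl-\rll)}\bigr|\leq \sigma_B\, b\, \theta(\rl,\rll)\, \inf_{\rho\in\Dnorm{\gm}}\intg{\L(\fhi)\rho}$$
with $\sigma_B<1$, provided $b$ is large enough for the $O(\theta)$ residual to be dominated by the $\Lambda_1\, b\,\theta$ principal.

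For (C), the constancy of $\phi$ gives $\intg{\L(\fhi)}=\frac{e^{\phi}}{p}\sum_j\intgj{\fhi}$, so
$$\bigl|\intg{\L(\fhi)}-\intgt{\L(\fhi)}\bigr|\leq\frac{c\,e^{\phi}}{p}\sum_j d(\gm_j,\gmt_j)^{\alpha}\inf_{\gm}\intg{\fhi}.$$
The $f$-invariance of holonomies~\eqref{fimcontexto} combined with the metric equivalence gives $d(\gm_j,\gmt_j)\leq C\,d(y_j,\ti y_j)$, and the Lipschitz property of the inverse branches of $g$ yields $d(y_j,\ti y_j)\leq L_j\,d(y,\ti y)\leq C L_j\,d(\gm,\gmt)$, with $L_j\leq L$ if $y_j\in\Omega$ and $L_j\leq \lambda_u<1$ otherwise. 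By (H1)--(H2) the number of indices with $y_j\in\Omega$ is at most $q<p$, so
$$\frac{1}{p}\sum_{j=1}^{p}L_j^{\alpha}\;\leq\;\frac{1}{p}\bigl(qL^{\alpha}+(p-q)\lambda_u^{\alpha}\bigr)\;<\;1$$
for $L$ sufficiently close to $1$; combining with $C^{2\alpha}$ (kept close to $1$) yields $\sigma_C<1$. Setting $\sigma:=\max(\sigma_B,\sigma_C)$ completes the argument.

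The principal difficulty is (B): the normalization discrepancy $\alpha_j-\beta_j$ has to be estimated to the correct order in $\theta(\rl,\rll)$ and its contribution absorbed into the main term by choosing $b$ large. Without the gain $\Lambda_1<1$ from Lemma~\ref{invconeaux}(3), namely the projective-metric contraction along stable leaves, no cone contraction could be produced; the rest of the proof is careful bookkeeping with H\"older seminorms, density integrals, and the geometric estimates on preimages of stable leaves.
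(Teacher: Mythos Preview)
Your overall architecture coincides with the paper's: the same transfer identity, the same use of Lemma~\ref{invconeaux}, the same splitting of the (B)-difference into a principal piece controlled by $\Lambda_1<1$ and a residual coming from the normalization mismatch, and an averaging over preimages for (C). Two points deserve correction or sharpening.

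\textbf{On (B): the residual is not $O(\theta)$ independently of $b$.} You write that ``$b$ large enough'' makes the $O(\theta)$ residual dominated by the $\Lambda_1\,b\,\theta$ principal. But the residual summand $(\alpha_j-\beta_j)\intgj{\fhi\,\hat\rho_j}$, once divided by $\inf_{\rho\in\Dnorm{\gm_j}}\intgj{\fhi\rho}$, is of order $b\,\theta$, not $\theta$: condition (B) itself gives only $\intgj{\fhi\,\hat\rho_j}\le \bigl(1+b\,\theta_j(\hat\rho_j,\cdot)\bigr)\inf_{\rho}\intgj{\fhi\rho}$, and the finite diameter from Lemma~\ref{invconeaux}(2) turns this into a factor $1+2b\log\frac{1+\lambda}{1-\lambda}$. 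Thus the total bound has the form $\bigl(\Lambda_1+2\log\tfrac{1+\lambda}{1-\lambda}\bigr)M(\kappa,\alpha)\,b+2M(\kappa,\alpha)$ with $M(\kappa,\alpha)=(1+\kappa\,\mathrm{diam}M^\alpha)^2$. The contraction $\sigma_B<1$ comes not from making $b$ large against a $b$-free residual, but from choosing $\lambda,\kappa$ small so that the entire coefficient of $b$ is strictly below $1$; only then does large $b$ absorb the additive $2M(\kappa,\alpha)$. Your mention of the cancellation $\sum_j(\alpha_j-\beta_j)=0$ is not actually exploited (nor is it in the paper), and it would not by itself repair the $b$-scaling of the residual.

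\textbf{On (C): a technical gap on the range of validity.} Condition (C) is stated only for $\gmt$ sufficiently close to $\gm$. For preimages with $L_j>1$ one has $d(\gm_j,\gmt_j)>d(\gm,\gmt)$, so applying (C) directly on those pairs with the same constant $c$ is not automatic. The paper handles this (invoking Lemma~3.5 of \cite{CV13}) by replacing $c$ with $c\bigl(1+(\tilde L-1)^\alpha\bigr)$ on the expanded pairs; this extra factor is harmless since $\tilde L$ is close to $1$. You should insert the analogous step. Incidentally, your counting via (H2) --- at most $q$ preimages in $\Omega$, giving $\tfrac{1}{p}\bigl(qL^\alpha+(p-q)\lambda_u^\alpha\bigr)<1$ --- is a cleaner use of the hypothesis than the paper's version (which keeps only one contracting preimage); but note that the metric-equivalence constant $C$ in your chain $d(\gm_j,\gmt_j)\le C\,L_j\,C\,d(\gm,\gmt)$ is fixed by the setting and cannot be ``kept close to $1$'': it must be absorbed into the smallness assumption on $L-1$ and $\lambda_u$.
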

\begin{proof} 

Invariance of condition  $(A)$: 
Let $\fhi \in C(b,c,\alpha)$. We know that $\intg{\L(\fhi)\rho}  = \sumjap \intgj{\fhi\rj}$ and by lemma \ref{invconeaux} $\rj \in \D(\gmj,\kappa)$. Therefore, $\intg{\L(\fhi)\rho}>0$.

Invariance of condition $(B)$: 
Denoting $\f{\rj}{\intgj{\rj}}$ by $\rc_j$ we can write
$$
\begin{array}{rcl}
\infimo{\rho \in \Dnorm{\gm}}{\intg{\L(\fhi)\rho}} & \geq & \sumjap \infimo{\rho \in \Dnorm{\gm}}{\intgj{\fhi\rj}}\\
&= & \sumjap \infimo{\rho \in \Dnorm{\gm}}{\intgj{\fhi\rc_j}\intgj{\rj}}\\
&\geq & \sumjap \infimo{\rho \in \Dnorm{\gmj}}{\intgj{\fhi\rho}}\infimo{\rho \in \Dnorm{\gm}}{\intgj{\rj}}\\
\end{array}
$$
Given $\rl,\rll \in \Dnorm{\gm}$ writing $\rlj/\intgj{\rlj}$ and $\rllj/\intgj{\rllj}$ for $\rbj$ and $\rbbj$, respectively, follows that
\begin{equation} \label{PrimeiraDesConB}
\begin{array}{rcl}
\modulo{\intg{\L (\fhi) \rl} -\intg{\L(\fhi) \rll}} & \leq & \sumjap \modulo{ \intgj{\fhi\rbj}- \intgj{\fhi\rbbj}} \intgj{\rlj}\\
&+&\sumjap \intgj{\fhi\rbbj}\modulo{\intgj{\rlj}-\intgj{\rllj}}.\\
\end{array}
\end{equation}
By hypothesis,  $\fhi$ is in the cone and by lemma \ref{invconeaux}, we have

\begin{equation} \label{desphidensidades}
\begin{array}{rcl}

\modulo{ \intgj{\fhi\rbj}- \intgj{\fhi\rbbj}} & \leq & b\theta_j\l(\rbj,\rbbj\r) \infimo{\rho \in \Dnorm{\gmj}}{\intgj{\fhi\rho}}\\	
& \leq & b\Lambda_1\theta\l(\rl,\rll\r) \infimo{\rho \in \Dnorm{\gmj}}{\intgj{\fhi\rho}}.\\
\end{array}
\end{equation}

For all $\rc \in \Dnorm{\gm}$ we obtain the following estimative

\begin{equation}\label{DesDasDensidades}
\f{\intgj{\rcj}}{\infimo{\rho \in \Dnorm{\gm}}{\intgj{\rj}}} \leq \l(1+\kappa diamM^\alpha\r)^2
\end{equation}

In fact, given $\delta>0$ there exists $\rt \in \Dnorm{\gm}$ such that $\intgj{\rtj} \leq (1+\delta)\infimo{\rho \in \Dnorm{\gm}}{\intgj{\rj}}$.
Moreover, as $\rc$ and $\rt$ are normalized, we necessarily have $\inf{\rc} \leq 1$ and $\sup{\rt} \geq 1$. Therefore,
$$
\f{\rcj}{\rtj}=\f{\f{1}{p}\rc \circ fe^{\phi}}{\f{1}{p}\rt \circ fe^{\phi}} \leq \f{\sup \rc}{\inf \rt}\leq \f{\l(1+\kappa diamM^\alpha\r)\inf\rc}{\l(1+\kappa diamM^\alpha\r)^{-1}\sup\rt}=\l(1+\kappa diamM^\alpha\r)^2.
$$

And so $\intgj{\rcj} \leq \l(1+\kappa diamM^\alpha\r)^2 \intgj{\rtj}$, we obtain for all 
 $\delta >0$
$$
\f{\intgj{\rcj}}{\infimo{\rho \in \Dnorm{\gm}}{\intgj{\rj}}} \leq \f{(1+\delta)\l(1+\kappa diamM^\alpha\r)^2\intgj{\rtj}}{\intgj{\rtj}} 
$$
$$
 \qquad \quad \qquad \leq (1+\delta)\l(1+\kappa diamM^\alpha\r)^2 
$$
giving the estimative we wish.

Now,  for fixed $j$, we obtain
\begin{equation}
\begin{array}{rl} \label{estimativa_parcela_1}
\f{\modulo{ \intgj{\fhi\rbj}- \intgj{\fhi\rbbj}} \intgj{\rlj}}{\infimo{\rho \in \Dnorm{\gmj}}{\intgj{\fhi\rho}}\infimo{\rho \in \Dnorm{\gm}}{\intgj{\rj}} \theta(\rl,\rll)}  \leq &
\f{b\Lambda_1\theta\l(\rl,\rll\r)\intgj{\rlj}}{\infimo{\rho \in \Dnorm{\gm}}{\intgj{\rj}} \theta(\rl,\rll)}\\
&\\
 \leq & \l(1+\kappa diamM^\alpha\r)^2\Lambda_1 b\\
\end{array}
\end{equation}

Let us analyse the second parcel of $\ref{PrimeiraDesConB}$. First, note that for all  $\rc \in \Dnorm{\gm}$, denoting  $\rcj/\intgj{\rcj}$ by $\bar{\rc}_j$, we claim that
$$
\f{\intgj{\fhi\bar{\rc}_j}}{\infimo{\rho \in \Dnorm{\gm}}{\intgj{\fhi \rbj}}} < b\log\l(\f{1+\lambda}{1-\lambda}\r)^2+1
$$
In fact, analogously to what was done in  $\ref{DesDasDensidades}$, it is sufficient to
 to note that, since $\fhi$ is in the cone, we have
$$
\f{\intgj{\fhi\bar{\rc}_j}}{\intgj{\fhi \rbj}} < b\theta(\bar{\rc}_j,\rbj)+1=b\theta(\rcj,\rj)+1
$$
By \ref{invconeaux}, we conclude the proof  of our claim. 

Now, we stablish the other necessary estimative:
$$
\f{\modulo{\intgj{\rlj}-\intgj{\rllj}}}{\theta\l(\rl,\rll \r)\infimo{\rho \in \Dnorm{\gm}}{\intgj{\rj}}}
 \leq
 2\l( 1+\kappa diamM^\alpha\r)^2.
$$
In order to prove this last estimative we observe that
$$
\f{\rlj}{\rllj}\leq\f{\sup{\rl}}{\inf{\rll}}\leq\f{\sup{\rl}/\inf{\rl}}{\inf{\rll}/\sup{\rll}}=e^{\theta_+\l(\rl,\rll \r)}\leq e^{\theta\l(\rl,\rll \r)}
$$
Therefore, by assuming without loss of generality that  $\intgj{\rlj} \geq \intgj{\rllj}$ we obtain
$$
\f{\modulo{\intgj{\rlj}-\intgj{\rllj}}}{\theta\l(\rl,\rll \r)\infimo{\rho \in \Dnorm{\gm}}{\intgj{\rj}}}
\leq
\f{\ds\l(e^{\ds\theta\l(\rl,\rll \r)}-1\r)\intgj{\rllj}}{\theta\l(\rl,\rll \r)\infimo{\rho \in \Dnorm{\gm}}{\intgj{\rj}}}
$$
for $\theta\l(\rl,\rll \r) \leq 1$ it follows $\f{e^{\ds\theta\l(\rl,\rll \r)}-1}{\theta\l(\rl,\rll \r)} < 2$ and so we
obtain our estimative.

If $\theta\l(\rl,\rll \r) \geq 1$ we also have that
$$
\f{\modulo{\intgj{\rlj}-\intgj{\rllj}}}{\theta\l(\rl,\rll \r)\infimo{\rho \in \Dnorm{\gm}}{\intgj{\rj}}}
\leq
\f{\modulo{\intgj{\rlj}-\intgj{\rllj}}}{\infimo{\rho \in \Dnorm{\gm}}{\intgj{\rj}}}
$$
$$
\qquad \quad \qquad \qquad \qquad \qquad \leq 2\l( 1+\kappa diamM^\alpha\r)^2
$$
and again for fixed $j$ and by  writing $M(\kappa,\alpha)$ for  $\l( 1+\kappa diamM^\alpha\r)^2$,
\begin{equation}
\begin{array}{rl} \label{estimativa_parcela_2}
\f{\intgj{\fhi\rbbj}\modulo{\intgj{\rlj}-\intgj{\rllj}}}{\infimo{\rho \in \Dnorm{\gm}}{\intgj{\fhi\rj}}\infimo{\rho \in \Dnorm{\gm}}{\intgj{\rj}} \theta(\rl,\rll)}
 \leq &
\f{\intgj{\fhi\rbbj}}{\infimo{\rho \in \Dnorm{\gm}}{\intgj{\fhi\rj}}}2M(\kappa,\alpha)\\
&\\
 \leq  \l( b\log\l(\f{1+\lambda}{1-\lambda}\r)^2+1\r)2M(\kappa,\alpha)
 \leq & 2M(\kappa,\alpha)\log\l(\f{1+\lambda}{1-\lambda}\r)^2b + 2M(\kappa,\alpha)\\
\end{array}
\end{equation}

The inequalities \ref{estimativa_parcela_1} and \ref{estimativa_parcela_2} does  not depend on $j$, so
$$
\begin{array}{rcl}
\f{\modulo{\intg{\L (\fhi) \rl} -\intg{\L(\fhi) \rll}}}{\infimo{\rho \in \Dnorm{\gm}}{\intg{\L(\fhi)\rho}} \theta(\rl,\rll)}
& \leq &
M(\kappa,\alpha)\Lambda_1 b + 2M(\kappa,\alpha)\log\l(\f{1+\lambda}{1-\lambda}\r)^2b + 2M(\kappa,\alpha)\\
& = &
\l(\Lambda_1 + 2\log\l(\f{1+\lambda}{1-\lambda}\r)^2\r)M(\kappa,\alpha)b + 2M(\kappa,\alpha)\\
\end{array}
$$
We need that the  term which  multiplies b above to be less than 1. Recall that by lemma (\ref{invconeaux}), $\Lambda_1=1-\l(\f{1-\lambda}{1+\lambda}\r)^2$. So, 
we need to guarantee that 
$$\l(1-\l(\f{1-\lambda}{1+\lambda}\r)^2+2\log\l(\f{1+\lambda}{1-\lambda}\r)^2\r)\l(1+\kappa diamM^\alpha\r)^2<1$$
Also by lemma (\ref{invconeaux}),  we can choose $\kappa$, such that $\kappa diamM^\alpha < \lambda$, $\lambda$ to be fixed. So let us find a bound $0<\lambda<1$ such that 
$$\l(1-\l(\f{1-\lambda}{1+\lambda}\r)^2+2\log\l(\f{1+\lambda}{1-\lambda}\r)^2\r)\l(1+\lambda\r)^2<1.$$
It is possible because (\ref{invconeaux}), $0<\lambda<1$ can be taken sufficiently small depending  on the contraction rate in the strong stable directions. So, there exists $0<\tilde{\sigma}_1<1$ 
such that
$$\l(\Lambda_1 + 2\log\l(\f{1+\lambda}{1-\lambda}\r)^2\r)M(\kappa,\alpha)<\tilde{\sigma}_1.$$
Since $M(\kappa,\alpha)$ does not depend on  $b$, for sufficiently large $b$ we can obtain  $\sigma_1<1$ such that
$$
\begin{array}{rcl}
\f{\modulo{\intg{\L (\fhi) \rl} -\intg{\L(\fhi) \rll}}}{\infimo{\rho \in \Dnorm{\gm}}{\intg{\L(\fhi)\rho}} \theta(\rl,\rll)}
& \leq &
\sigma_1 b\\
\end{array}
$$
This prove the strict invariance of condition $(B)$.

Invariance of condition $(C)$: This is where we need that $\phi$ is constant.
We have that
$$\ds\infimo{\gm}{\intg{\L \fhi}} \geq e^{\phi}\ds\infimo{\gm}{\intg{\fhi}}.$$
For $g$ as in \ref{CV}, every  $y \in N$ has at least one pre-image out of the region $\Omega$. So, for 
$\gm=\Pi^{-1}(y)$ and $\gmt=\Pi^{-1}(\tilde{y})$ sufficiently close to $\gm$ such that  $\tilde{y}_i$ is pre-image  $\tilde{y}$, close to  $y_i$, stay in $U_{y_i}$ we obtain that $d(\gm_i,\gmt_i)\leq \lambda_u d(\gm,\gmt)$. 

In fact, let  $x \in \gmt_i$ realizing the distance $d(\gm_i,\gmt_i)$. By a slight abuse 
of notation,  we write  $d$ for the product distance equivalent to the original metrics.
So,  
$$
\begin{array}{rcl}
d(\gm_i,\gmt_i)& = & d(x,\pi_{\tilde
{y}_i,y_i}(x)) =d(\tilde{y}_i,y_i) \leq \lambda_u d(g(\tilde{y}_i),g(y_i))\\
&=&\lambda_u d(\tilde{y},y)=\lambda_u\l[d(\tilde{y},y)+d(\pi_{\tilde{y},y}(f(x)),\pi_{\tilde{y},y}(f(x)))\r]\\
& = &  \lambda_u d(f(x),\pi_{\tilde{y},y}(f(x))) \leq \lambda_u d(\gmt,\gm)
\end{array}
$$
Analogously, in the other cases  we have $d(\gm_j,\gmt_j)\leq L d(\gm,\gmt)$. Furthermore, we can assume with no loss of generality, that $d(\gm_1,\gmt_1)\leq\tilde{\lambda}_u d(\gm,\gmt)$, and for other pre-images we have $d(\gm_j,\gmt_j)\leq \tilde{L} d(\gm,\gmt)$. Note that condition C could not hold for such other pre-images with the same $c$, since they are more distant than the initial leaves $\gm, \gmt$. 
However, just as in Lemma 3.5 of \cite{CV13}, It holds with constant $c(1+ (\tilde L-1)^{\alpha})$ instead of $c$.
For It follows that
$$
\begin{array}{rcl}
\ds\l|\int_{\gm} \L \fhi \dmg - \int_{\gmt} \L \fhi \dmgt \r|
& \leq &
\ds\f{e^{\phi}}{p}\sumjap\l|\int_{\gmj} \fhi \dmgj - \int_{\gmtj} \fhi \dmgtj \r| \\
& \leq &
\f{e^{\phi}c}{p}\infimo{\gm}{\intg{\fhi}} \sumjap d(\gmj,\gmtj)^{\alpha}\\
& \leq &
\f{\tilde{\lambda}_u^{\alpha}+(p-1)(1+ (\tilde L- 1)^\alpha )\tilde{L}^{\alpha}}{p} c d(\gm,\gmt)^{\alpha}\ds\inf_{\gm}\l\{\int_{\gm}\L\fhi d\mu_\gm \r\}\\
\end{array}
$$
We should obtain $\f{\tilde{\lambda}_u^{\alpha}+(p-1)(1+ (\tilde L- 1)^\alpha )\tilde{L}^{\alpha}}{p} < 1$. This is equivalent to
$
(1+ (\tilde L- 1)^\alpha )\tilde{L}^\alpha < \f{p-\tilde{\lambda}_u^\alpha}{p-1}.
$
Due to the  fact that $\tilde{L}\geq1$, we have 
$
\f{p-\tilde{\lambda}_u^\alpha}{p-1} \geq 1
$,
because  $\tilde{\lambda}_u<1$. Therefore, there exists $0<\sigma_2 <1$ such that
\begin{equation}
\modulo{\intg{\L\fhi} - \intgt{\L\fhi}} < \sigma_2 c d(\gm,\gmt)^{\alpha}\infimo{\gm}{\intg{\L\fhi}}  \label{eqC}
\end{equation}
which proves (C).

By setting $\sigma=\max\{\sigma_1,\sigma_2\}$, we finish the proof of the proposition.
\end{proof}

\section{Finite Diameter of the Main Cone}

From now on, up to the end of our text, $\phi$ will be always constant.
In this section, we prove the strict invariance of the main  cone  $C\l(b,c,\alpha\r)$ by the  Ruelle-Perron-Frobenius operator $\L$. First, let us calculate 
the projective metrics $\Theta$. Recall that $\alpha_C(\fhi,\psi)=\sup\l\{t>0; \psi-t\fhi \in C\l(b,c,\alpha\r) \r\}$. 
By (A), for all $\gm \in \F^s_{loc}$ and $\rho \in \D\l(\gm\r)$ we have $\intg{(\psi-t\fhi)\rho}>0$,  that is,
$t < \f{\intg{\psi\rho}}{\intg{\fhi\rho}}.$
By condition (B), one obtains
$$
\modulo{\intg{(\psi-t\fhi)\rl}- \intg{(\psi-t\fhi) \rll}} < b \theta\l(\rl,\rll\r) \infimo{\rho \in \Dnorm{\gamma}}{\intg{(\psi-t\fhi)\rho}}
$$
and so, for all $\rl, \rll$, and $\rc$ in $\Dnorm{\gamma}$ we have
$$
t < \f{\intg{\psi\rl}-\intg{\psi\rll}+b\theta(\rl,\rll)\intg{\psi\rc}}{\intg{\fhi\rl}-\intg{\fhi\rll}+b\theta(\rl,\rll)\intg{\fhi\rc}}
$$
and
$$
t < \f{\intg{\psi\rll}-\intg{\psi\rl}+b\theta(\rl,\rll)\intg{\psi\rc}}{\intg{\fhi\rll}-\intg{\fhi\rl}+b\theta(\rl,\rll)\intg{\fhi\rc}}.
$$
By condition (C),
$$
\modulo{\intg{(\psi-t\fhi)} - \intgt{(\psi-t\fhi)}} < c d(\gm,\gmt)^{\alpha}\infimo{\gm}{\intg{\l(\psi-t\fhi\r)}}
$$
therefore, for all $\gm,\gmc \in \F^s_{loc}$ and  $\gmt$ sufficiently close to $\gm$ we have
$$
t < \f{\intgt{\psi}-\intg{\psi}+cd(\gm,\gmt)\intgc{\psi}}{\intgt{\fhi}-\intg{\fhi}+cd(\gm,\gmt)\intgc{\fhi}}
$$
and
$$
t < \f{\intg{\psi}-\intgt{\psi}+cd(\gm,\gmt)\intgc{\psi}}{\intg{\fhi}-\intgt{\fhi}+cd(\gm,\gmt)\intgc{\fhi}}.
$$
By defining
$$\xi(\gm,\rl,\rll,\rc,\fhi,\psi)
=
\f{\l(\intg{\psi\rll}-\intg{\psi\rl}\r)/\intg{\psi\rc}+b\theta(\rl,\rll)}{\l(\intg{\fhi\rll}-\intg{\fhi\rl}\r)/\intg{\fhi\rc}+b\theta(\rl,\rll)}
$$
and
$$\eta(\gm,\gmt,\gmc,\rc,\fhi,\psi)
=
\f{\l(\intg{\psi}-\intgt{\psi}\r)/\intgc{\psi}+cd(\gm,\gmt)}{\l(\intg{\fhi}-\intgt{\fhi}\r)/\intgc{\fhi}+cd(\gm,\gmt)}.
$$
we can write
$$
\alpha_C(\fhi,\psi)=\inf\l\{\f{\intg{\psi\rho}}{\intg{\fhi\rho}},\f{\intg{\psi\rc}}{\intg{\fhi\rc}}
\xi(\gm,\rl,\rll,\rc,\fhi,\psi),\f{\intgc{\psi}}{\intgc{\fhi}}\eta(\gm,\gmt,\gmc,\rc,\fhi,\psi)\r\}
$$
as $\beta_C(\fhi,\psi)=\alpha_C(\psi,\fhi)^{-1}$ we obtain
$$
\beta_C(\fhi,\psi)=\sup\l\{\f{\intg{\fhi\rho}}{\intg{\psi\rho}},\f{\intg{\fhi\rc}}{\intg{\psi\rc}}
\xi(\gm,\rl,\rll,\rc,\psi,\fhi),\f{\intgc{\fhi}}{\intgc{\psi}}\eta(\gm,\gmt,\gmc,\rc,\psi,\fhi)\r\}
$$
Now, we prove that the $\Theta$-diameter  of $\L\l(C(b,c,\alpha)\r)$ is finite.
\begin{proposition} \label{DiametroFinito}
For all sufficiently large $b>0$, $c>0$ and for $\alpha \in (0,1]$ we have 
$$
\Delta:=\sup\l\{\Theta\l(\L\fhi,\L\psi\r); \fhi,\psi \in C(b,c,\alpha)\r\}<\infty.
$$
\end{proposition}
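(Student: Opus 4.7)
The strategy is the standard ``strict invariance implies finite diameter'' argument for Birkhoff cones, specialized to $C(b,c,\alpha)$. By Proposition \ref{InvarianciaDoCone}, for $\fhi,\psi\in C(b,c,\alpha)$ we have $\tilde\fhi:=\L\fhi$ and $\tilde\psi:=\L\psi$ in the strictly smaller cone $C(\sigma b,\sigma c,\alpha)$, with $0<\sigma<1$. It suffices to bound $\alpha_C(\tilde\fhi,\tilde\psi)$ from below by a strictly positive constant depending only on $b,c,\sigma,\kappa$ and geometric data; the matching upper bound on $\beta_C(\tilde\fhi,\tilde\psi)$ then follows from $\beta_C(\tilde\fhi,\tilde\psi)=\alpha_C(\tilde\psi,\tilde\fhi)^{-1}$. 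I proceed term by term in the three-term infimum characterization of $\alpha_C$ derived just before this proposition.

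\textit{Step 1: bounding the $\xi$ and $\eta$ factors.} Applying condition (B) to $\tilde\psi\in C(\sigma b,\sigma c,\alpha)$ with the smaller constant $\sigma b$ yields
$$\left|\intg{\tilde\psi\rll}-\intg{\tilde\psi\rl}\right|\le \sigma b\,\theta(\rl,\rll)\,\inf_{\rho}\intg{\tilde\psi\rho}\le \sigma b\,\theta(\rl,\rll)\,\intg{\tilde\psi\rc},$$
and the same for $\tilde\fhi$. Substituting into the formula for $\xi(\gm,\rl,\rll,\rc,\tilde\fhi,\tilde\psi)$, both numerator and denominator lie in the interval $[(1-\sigma)b\theta(\rl,\rll),\,(1+\sigma)b\theta(\rl,\rll)]$, so
$$\xi\in\left[\frac{1-\sigma}{1+\sigma},\,\frac{1+\sigma}{1-\sigma}\right].$$
The analogous range for $\eta$ follows by applying condition (C) with constant $\sigma c$ in the same way.

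\textit{Step 2: bounding the scale ratios.} After Step 1, the three terms in $\alpha_C$ reduce to bounding the positive ratios $\intg{\tilde\psi\rho}/\intg{\tilde\fhi\rho}$, $\intg{\tilde\psi\rc}/\intg{\tilde\fhi\rc}$ and $\intgc{\tilde\psi}/\intgc{\tilde\fhi}$ uniformly from below. The idea is to show that for any $\omega\in C(\sigma b,\sigma c,\alpha)$, the quantity $\intg{\omega\rho}$ varies by only a bounded multiplicative factor as $(\gm,\rho)$ ranges over $\F^s\times\Dnorm{\gm}$. Condition (B) with $\sigma b$, combined with the a priori bound $\sup\rho/\inf\rho\le 1+\kappa\diam(M)^\alpha$ for $\rho\in\D(\gm,\kappa)$, controls the variation of $\intg{\omega\rho}$ in $\rho$ on a single leaf; condition (C) with $\sigma c$, iterated along a finite chain of overlapping neighborhoods in the compact leaf space, controls the variation between different leaves. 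Normalizing $\tilde\fhi,\tilde\psi$ to agree at a fixed reference pair $(\gm_0,\rho_0)$, this sandwiches each of $\intg{\tilde\fhi\rho}$ and $\intg{\tilde\psi\rho}$ in a common bounded interval $[m_*,M_*]$, with $M_*/m_*$ depending only on $\sigma,b,c,\kappa$ and geometric constants. Hence all three scale ratios lie in $[m_*/M_*,M_*/m_*]$.

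\textit{Conclusion and main obstacle.} Combining Steps 1 and 2 gives
$\alpha_C(\tilde\fhi,\tilde\psi)\ge (m_*/M_*)(1-\sigma)/(1+\sigma)>0$,
and by symmetry $\beta_C(\tilde\fhi,\tilde\psi)\le(M_*/m_*)(1+\sigma)/(1-\sigma)<\infty$. Therefore $\Theta(\tilde\fhi,\tilde\psi)$ is uniformly bounded, so $\Delta<\infty$. The main technical obstacle is the chaining step in Step 2: condition (C) provides only local control on pairs of nearby leaves, so globally comparing $\intg{\omega\cdot}$ across distant leaves requires iterating (C) a fixed number of times along a chain produced by compactness of $\Lambda$ and continuity of the strong stable foliation, while ensuring that the accumulated deviation stays bounded. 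The strict margin $\sigma<1$ coming from Proposition \ref{InvarianciaDoCone} supplies exactly the slack needed at each link of the chain.
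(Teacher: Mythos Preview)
Your Step~1 is correct and coincides with the paper's opening computation: strict invariance with constant $\sigma<1$ forces both $\xi$ and $\eta$ into $[(1-\sigma)/(1+\sigma),(1+\sigma)/(1-\sigma)]$, so $\Theta\le\Theta_++\log\bigl((1+\sigma)/(1-\sigma)\bigr)^2$.

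Your Step~2, however, has a genuine gap in the control of the variation in $\rho$. You claim that condition~(B) together with the a~priori bound $\sup\rho/\inf\rho\le 1+\kappa\,\diam(M)^\alpha$ bounds $\intg{\omega\rho_1}/\intg{\omega\rho_2}$ for a generic $\omega\in C(\sigma b,\sigma c,\alpha)$. But condition~(B) gives
$\bigl|\intg{\omega\rho}-\intg{\omega}\bigr|<\sigma b\,\theta(\rho,1)\,\inf_{\rho'}\intg{\omega\rho'}$,
and $\theta(\rho,1)$ is \emph{unbounded} on $\Dnorm{\gm}$ (it blows up as $|\rho|_\alpha/\inf\rho\to\kappa$). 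The a~priori bound on $\sup\rho/\inf\rho$ does not rescue this: since $\omega$ is only known to satisfy $\intg{\omega\rho}>0$ for $\rho\in\D(\gm,\kappa)$, it may well be negative on part of $\gm$, so the pointwise inequality $\rho_1\ge(1+\kappa\,\diam M^\alpha)^{-2}\rho_2$ does not translate into an inequality of the integrals $\intg{\omega\rho_i}$. In short, you cannot bound $\Theta_+$ on the whole of $C(\sigma b,\sigma c,\alpha)$ by these means.

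The paper avoids this by \emph{not} working with a generic element of the smaller cone. It bounds $\Theta_+(\L\fhi,1)$ directly for $\fhi\in C(b,c,\alpha)$, expanding $\intg{\L\fhi\,\rho}=\sum_j\intgj{\fhi\,\rho_j}$ with $\rho_j=\tfrac{1}{p}\rho\circ f\,e^\phi$. The key point is Lemma~\ref{invconeaux}: the pushed densities $\rho_j$ land in the \emph{strictly smaller} auxiliary cone $\D(\gm_j,\lambda\kappa)$, where the projective distance to the constant density is bounded by $\log\bigl((1+\lambda)/(1-\lambda)\bigr)$. Condition~(B) for $\fhi$ then gives a uniform bound on $\intgj{\fhi\,\rho_j}/\intgj{\fhi}$, and condition~(C) handles the comparison between the preimage leaves $\gm_j$ and $\hat\gm_j$. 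So the finite diameter comes not from the mere inclusion $\L(C)\subset C(\sigma b,\sigma c,\alpha)$ but from the specific action of $\L$ on the auxiliary density cones. Your identification of the ``main obstacle'' as the chaining for~(C) is misplaced; the real obstacle is the density variation, and it is resolved by Lemma~\ref{invconeaux}, not by compactness.
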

\begin{proof}
Given $\fhi,\psi \in C(\sigma b,\sigma c, \alpha)$, note that
$$\f{1-\sigma}{1+\sigma} < \xi(\gm,\rl,\rll,\rc,\psi,\fhi) < \f{1+\sigma}{1-\sigma}$$
and
$$\f{1-\sigma}{1+\sigma} <\eta(\gm,\gmt,\gmc,\rc,\psi,\fhi) < \f{1+\sigma}{1-\sigma}$$
Indeed, given $\rl,\rll,\rc \in\Dnorm{\gm}$ 
$$
\f{\intg{\fhi\rll}-\intg{\fhi\rl}}{\intg{\fhi\rc}} \leq \f{\sigma b \theta(\rl,\rll) \infimo{\rho \in \Dnorm{\gm}}{\intg{\fhi\rho}}}{\intg{\fhi\rc}} \leq \sigma b\theta(\rl,\rll)
$$
and
$$
\f{\intg{\fhi\rll}-\intg{\fhi\rl}}{\intg{\fhi\rc}} \geq \f{-\sigma b \theta(\rl,\rll) \infimo{\rho \in \Dnorm{\gm}}{\intg{\fhi\rho}}}{\intg{\fhi\rc}} \geq -\sigma b\theta(\rl,\rll)
$$
holds.

The same is valid for $\psi$ and as $\sigma < 1$ we conclude that
$$
\f{1-\sigma}{1+\sigma}
< 
\f{\l(\intg{\psi\rll}-\intg{\psi\rl}\r)/\intg{\psi\rc}+b\theta(\rl,\rll)}{\l(\intg{\fhi\rll}-\intg{\fhi\rl}\r)/\intg{\fhi\rc}+b\theta(\rl,\rll)}
<
\f{1+\sigma}{1-\sigma}
$$
That is,
$$\f{1-\sigma}{1+\sigma} < \xi(\gm,\rl,\rll,\rc,\psi,\fhi) < \f{1+\sigma}{1-\sigma}.$$
In a similar way, we prove that  
$$\f{1-\sigma}{1+\sigma} <\eta(\gm,\gmt,\gmc,\rc,\psi,\fhi) < \f{1+\sigma}{1-\sigma}$$
Denoting by $\Theta_+$ the projective metrics associated to the cone defined just by condition (A),  
$$
\Theta_+\l(\fhi,\psi\r)=\ds \log \sup_{\gm,\rho \in \D(\gm),\gmc,\rc\in\D(\gmc)}\l\{\f{\intg{\fhi\rho}\intgc{\psi\rc}}{\intgc{\fhi\rc}\intg{\psi\rho}}\r\}
$$
it followd by the expression of  $\Theta$ that
$$
\Theta(\fhi,\psi) < \Theta_+(\fhi,\psi)+\log\l(\f{1+\sigma}{1-\sigma}\r)^2.
$$
So, we just need to prove that the  $\Theta_+$-diameter of $\L\l(C(b,c,\alpha)\r)$ is finite.
By a triangular argument, it is sufficient to show that  $\{\Theta_+(\L\fhi,1); \fhi \in C(b,c,\alpha)\}$ is finite. 
For that, we just need to find an upper bound for
$
\f{\intgc{\L\fhi\rc}}{\intg{\L\fhi\rho}}
$
for all $\fhi \in C(b,c,\alpha)$, $\rho \in \Dnorm{\gm}$ and $\rc \in \Dnorm{\gmc}$.
First, note that 
$$
\f{\intgc{\L\fhi\rc}}{\intg{\L\fhi\rho}}=\f{\sumjap\intgcj{\fhi\rcj}}{\sumjap\intgj{\fhi\rj}}
$$
and we reduce our problem to bound
$$\f{\intgcj{\fhi\rcj}}{\intgj{\fhi\rj}}
=
\f{\intgcj{\fhi\rcj}}{\intgcj{\fhi}}
\f{\intgcj{\fhi}}{\intgj{\fhi}}
\f{\intgj{\fhi}}{\intgj{\fhi\rj}}$$
Denoting $\f{\rj}{\intgj{\rj}}$ and $\f{\rcj}{\intgcj{\rcj}}$ by $\rbj$ and $\rbbj$, respectively,
applying $(B)$ and lemma \ref{invconeaux}, we obtain
%
\begin{eqnarray}
\f{\intgcj{\fhi\rcj}}{\intgcj{\fhi}}
&=&
\f{\intgcj{\fhi\rbbj}\intgcj{\rcj}}{\intgcj{\fhi}} \leq\, 
\l(1+b\theta_j\l(\rbbj,1\r)\r)\intgcj{\rcj} \nonumber\\
&\leq&
\l(1+b\log\l(\f{1+\lambda}{1-\lambda}\r)\r)\intgcj{\rcj}\label{eqrep1}
\end{eqnarray}
and
$$
\f{\intgj{\fhi}}{\intgj{\fhi\rj}}=\f{\intgj{\fhi}}{\intgj{\fhi\rbj}\intgj{\rj}}
\leq
\f{\l(1+b\theta_j\l(1,\rbj\r)\r)}{\intgj{\rj}}
\leq
\f{\l(1+b\log\l(\f{1+\lambda}{1-\lambda}\r)\r)}{\intgj{\rj}} 
$$
We know that $\rcj = \f{1}{p}\rc\circ f \cdot e^{\phi}$ and $\rj = \f{1}{p}\rho\circ f \cdot e^{\phi}$. Since $\rho$ and $\rc$ are normalized, 
it follows that $\rcj \leq \f{1}{p}(1+\kappa diam(M)^\alpha)e^{\phi}$ and $\rj \geq \f{1}{p}(1+\kappa diam(M)^\alpha)^{-1}e^{\phi}$. 
Therefore
$$
\f{\intgcj{\rcj}}{\intgj{\rj}} < (1+\kappa diam(M)^\alpha)^2\f{\intgcj{e^{\phi}}}{\intgj{e^{\phi}}},
$$
On the other hand, $\holder{e^{\phi}} < \varepsilon \inf{e^{\phi}}$
 and so $\sup{e^{\phi}}<(1+\varepsilon diam (M)^{\alpha})\inf{e^{\phi}}$ . We then obtain
$
\f{\intgcj{e^{\phi}}}{\intgj{e^{\phi}}}<1+\varepsilon diam (M)^{\alpha}
$
and by consequence
$$
\begin{array}{rcl}
\f{\intgcj{\rcj}}{\intgj{\rj}} & < & (1+\kappa diam(M)^\alpha)^2(1+\varepsilon diam (M)^{\alpha}). \\
\end{array}
$$
Moreover for $\gm$ e $\gmc$ such that we can apply  (C) we have
$$
\f{\intgcj{\fhi}}{\intgj{\fhi}} \leq 1+cd\l(\gmcj,\gmj\r)^\alpha \leq 1+c.diam(M)^\alpha
$$
implying that
$$
\f{\intgcj{\fhi\rcj}}{\intgj{\fhi\rj}} < \l(1+b\log\l(\f{1+\lambda}{1-\lambda}\r)\r)^2(1+\max\{\kappa,c,\varepsilon\} diam(M)^\alpha)^4,
$$
finishing the proof of the proposition.
\end{proof}

\section{Exponential Decay of Correlations}

In this section, we prove the Exponential Decay of Correlation for  H\"older continuous observables. 

In our context, the transfer operator is just $\L(\fhi)=\fhi\circ f^{-1}$ acting in the space of continuous observables. 

The adjoint operador of $\L$ is
$$
\ds\int{\Lt\fhi \dm}=\ds\int{\fhi d\L^* \mu}.
$$
for all continuous $\fhi$ and all probability measure $\mu$.
Instead of the nonuniformly expanding case, any invariant probability  is an 
eigenmeasure of the transfer operator's adjoint:

\begin{proposition}
If  $f$ is invertible, then $\L^*(\mu)=\mu$ if and only if $\mu$ is $f$-invariant.
\end{proposition}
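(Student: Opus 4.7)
The plan is to unwind the duality definition and use the invertibility of $f$ to produce a bijection of test functions. Since $\phi$ is constant and $\L(\fhi)=\fhi\circ f^{-1}$ (absorbing the constant $e^{\phi}$, which does not affect the statement of eigenvector), the duality relation reads
\[
\int \fhi\circ f^{-1}\,d\mu \;=\; \int \fhi \, d\L^*\mu \qquad \text{for all } \fhi\in C(\Lambda).
\]
The key observation I would use is that, since $f:\Lambda\to\Lambda$ is a homeomorphism, the map $\fhi\mapsto \fhi\circ f^{-1}$ is a bijection of $C(\Lambda)$ onto itself; this is what distinguishes the invertible case from the nonuniformly expanding setting, where the adjoint only fixes the conformal measure.

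For the direction $(\Leftarrow)$, assume $\mu$ is $f$-invariant. Then for every $\fhi\in C(\Lambda)$, taking $\psi=\fhi\circ f^{-1}$, $f$-invariance of $\mu$ gives $\int \psi\circ f\,d\mu=\int \psi\,d\mu$, that is, $\int \fhi\,d\mu = \int \fhi\circ f^{-1}\,d\mu = \int \fhi\,d\L^*\mu$. Since this holds for every continuous $\fhi$, the Riesz representation theorem yields $\L^*\mu=\mu$.

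For $(\Rightarrow)$, assume $\L^*\mu=\mu$. Then $\int \fhi\circ f^{-1}\,d\mu=\int \fhi\,d\mu$ for every $\fhi\in C(\Lambda)$. Given any $\psi\in C(\Lambda)$, set $\fhi:=\psi\circ f$, which is continuous since $f$ is continuous; then $\fhi\circ f^{-1}=\psi$, and the identity above becomes $\int \psi\,d\mu=\int \psi\circ f\,d\mu$. As $\psi$ is arbitrary, $\mu$ is $f$-invariant.

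There is essentially no obstacle beyond noticing where invertibility enters: in the $(\Rightarrow)$ direction we must write an arbitrary continuous $\psi$ as $\fhi\circ f^{-1}$ for some $\fhi\in C(\Lambda)$, which requires $f^{-1}$ to be continuous so that $\fhi=\psi\circ f$ is admissible. This is precisely the homeomorphism hypothesis, and it is the reason every invariant probability — not merely a distinguished conformal one — is an eigenmeasure of $\L^*$ in the present context.
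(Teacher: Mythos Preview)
Your proof is correct and essentially identical to the paper's own argument: both unwind the duality $\int \L\fhi\,d\mu=\int\fhi\,d\L^*\mu$ with $\L\fhi=\fhi\circ f^{-1}$ and use that $\fhi\mapsto\fhi\circ f^{-1}$ is a bijection on $C(\Lambda)$ to pass between $f$-invariance and $\L^*\mu=\mu$. Your version is slightly more explicit about where invertibility is used and invokes Riesz, but the substance is the same.
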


\begin{proof}

Let $\fhi$ be a continuous function.
If $\L^*(\mu)=\mu$ then
$$
\int{\fhi \circ f^{-1}}\dm=\int{\L(\fhi)}\dm=\int{\fhi} d\L^*(\mu)=\int{\fhi} \dm,
$$
Now, givem an $f$-invariant measure  $\mu$, we have
$$
\int{\fhi}d\L^*(\mu)=\int{\Lt(\fhi)}\dm=\int{\fhi\circ f^{-1}}\dm=\int{\fhi}\dm
$$

\end{proof}

Other important relation obtained from the  $f$-invariance of a measure  $\mu$ is that
\begin{equation}\label{dualidade}
\int{\l(\fhi \circ f^n\r)\psi}\dm=\int{\fhi\L^n(\psi)}\dm
\end{equation}
Indeed, as $\Lt(\fhi)=\fhi \circ f^{-1}$ we have
$$
\int{\l(\fhi \circ f\r)\psi}\dm = \int{\fhi \circ f \circ f^{-1} \psi \circ f^{-1}}\dm=\int{\fhi \L(\psi)}\dm
$$
and by induction, 
$$
\int{\l(\fhi \circ f^n\r)\psi}\dm=\int{\fhi\L^n(\psi)}\dm.
$$

The exponential decay of correlations of the maximizing entropy measure will be a consequence of 
the strict invariance of the Main Cone that we proved in the last section, and the following

\begin{lemma}\label{lesoma}
For all $\fhi \in C^\alpha\l(M\r)$ there exists $K(\fhi)>0$ such that $\fhi+K(\fhi) \in C(b,c,\alpha)$.
\end{lemma}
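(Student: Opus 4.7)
The idea is to show that, for a constant $K>0$ large enough depending on $\fhi$, the function $\fhi+K$ lies in $C(b,c,\alpha)$. The guiding observation is that each $\rho\in\Dnorm{\gm}$ satisfies $\intg{\rho}=1$ and each leaf measure $\mg$ is a probability, so adding the constant $K$ \emph{cancels} on the left-hand sides of (B) and (C), while it raises every infimum on the right-hand side by $+K$. Condition (A) is then immediate: if $K>\|\fhi\|_\infty$ and $\rho\in\D(\gm,\kappa)$, then $\intg{(\fhi+K)\rho}\ge(K-\|\fhi\|_\infty)\intg{\rho}>0$, since $\rho>0$ on $\gm$.

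For condition (B), the normalizations give
\[
\intg{(\fhi+K)\rl}-\intg{(\fhi+K)\rll}=\intg{\fhi(\rl-\rll)}
\qquad\text{and}\qquad
\infimo{\rho\in\Dnorm{\gm}}{\intg{(\fhi+K)\rho}}\ge K-\|\fhi\|_\infty.
\]
The remaining task, and the main technical step, is to bound the left-hand side by a constant (depending only on $\|\fhi\|_\infty$) times $\theta(\rl,\rll)$. Using the projective-metric estimate $\rl/\rll\le e^{\theta(\rl,\rll)}$ already exploited in Lemma~\ref{invconeaux}, together with the fact that $\intg{\rl}=\intg{\rll}=1$, one gets $\intg{\modulo{\rl-\rll}}\le e^{\theta(\rl,\rll)}-1$; splitting into $\theta(\rl,\rll)\le 1$ and $\theta(\rl,\rll)\ge 1$ yields
\[
\modulo{\intg{\fhi(\rl-\rll)}}\le C_1\|\fhi\|_\infty\,\theta(\rl,\rll)
\]
for an absolute constant $C_1$. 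Thus (B) holds as soon as $b(K-\|\fhi\|_\infty)>C_1\|\fhi\|_\infty$.

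For condition (C), the holonomy invariance $\intg{\fhi}=\intgt{\fhi\circ\pi}$ and the fact that $\mg,\mu_{\gmt}$ are probabilities again make $K$ cancel:
\[
\intg{(\fhi+K)}-\intgt{(\fhi+K)}=\intgt{(\fhi\circ\pi-\fhi)}.
\]
Since $d(\pi(p),p)\le d(\gm,\gmt)$ for every $p\in\gmt$ and $\fhi$ is $\alpha$-H\"older, this is bounded by $|\fhi|_\alpha\,d(\gm,\gmt)^\alpha$, while the right-hand side of (C) is at least $c\,d(\gm,\gmt)^\alpha(K-\|\fhi\|_\infty)$. Hence (C) is satisfied whenever $c(K-\|\fhi\|_\infty)>|\fhi|_\alpha$. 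Taking $K$ strictly larger than the three thresholds appearing in (A)--(C) concludes the proof. The only delicate point is the Lipschitz-in-$\theta$ estimate used in (B), but it reduces to the elementary pointwise inequality on normalized densities in $\D(\gm,\kappa)$ mentioned above; the rest of the argument is purely algebraic.
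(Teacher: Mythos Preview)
Your proof is correct and follows essentially the same approach as the paper: you treat the three cone conditions separately, exploit the normalizations $\intg{\rho}=1$ and $\mg(\gm)=1$ so that the added constant cancels on the left of (B) and (C), use holonomy plus $\alpha$-H\"older continuity for (C), and for (B) derive the Lipschitz-in-$\theta$ bound from $\rl/\rll\le e^{\theta(\rl,\rll)}$ with the same split into $\theta\le 1$ and $\theta\ge 1$. The only cosmetic difference is that the paper normalizes via a shift so that $\sup(\fhi+B)=1$ and then uses $\sup\rll\le 1+\kappa\,\mathrm{diam}(M)^\alpha$, whereas you bound $\intg{|\rl-\rll|}$ directly by $e^{\theta}-1$; your version is slightly cleaner but the content is the same.
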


\begin{proof}
First we  prove that there exists $K_3=K_3(\fhi)>0$ such that $\fhi  + K_3$ satisfies the condition (C) in the definition of cone 
$C(b,c,\alpha)$.
The projections between stable leaves guarantees that
$$\intg{\fhi}=\intgt{\fhi\circ\pi}$$
Given $\fhi \in C^\alpha\l(M\r)$ we have
$$
\f{\fhi(x)-\fhi(\pi(x))}{d(\gm,\gmt)} \leq \f{\fhi(x)-\fhi(\pi(x))}{d(\pi(x),x)} \leq \holder{\fhi}
$$
So
$$
\sup_{\gm,\gmt}\l\{\f{\l|\intg{\fhi}-\intgt{\fhi}\r|}{d(\gm,\gmt)}\r\} \leq \holder{\fhi} < \infty
$$
On the other hand, for $K>0$, all we have  $\infimo{\gm}{\intg{\l(\fhi+K\r)}}=\infimo{\gm}{\intg{\fhi}}+K$. It is sufficient to choose 
$K_3=K_3(\fhi)>0$ such that
$$
c\infimo{\gm}{\intg{\l(\fhi+K_3\r)}}>\holder{\fhi}
$$
In order to see that there exists  $K_2=K_2(\fhi)$ such that $\fhi+K_2$ satisfies the condition (B), just note that
$$
\sup_{\rl,\rll \in \Dnorm{\gm}}\l\{\f{\l|\intg{\fhi\rl}-\intg{\fhi\rll}\r|}{\theta\l(\rl,\rll\r)}\r\} < \infty.
$$
Indeed, as $\rl,\rll \in \Dnorm{\gm}$ we have $\f{\rl}{\rll} \leq e^{\theta\l(\rl,\rll\r)}$ and so, for all  bounded $\fhi$ 
$$
\begin{array}{rl}
\modulo{\intg{\fhi\rl}-\intg{\fhi\rll}}
 = &
\modulo{\intg{\l(\f{\rl}{\rll}-1\r)\fhi\rll}} \leq \intg{\modulo{\f{\rl}{\rll}-1}\modulo{\fhi}\rll}\\
&\\
\leq &
\sup\modulo{\f{\rl}{\rll}-1}\sup\fhi\sup\rll = \modulo{\sup\f{\rl}{\rll}-1}\sup\fhi\sup\rll\\
&\\
\leq &
\modulo{e^{\theta\l(\rl,\rll\r)}-1}\sup\fhi \sup\rll\\
\end{array}
$$
Let $B$ such that $\sup{\l(\fhi+B\r)}=1$. It follows that
$$
\f{\modulo{\intg{\fhi\rl}-\intg{\fhi\rll}}}{\theta\l(\rl,\rll\r)}
= \f{\modulo{\intg{\l(\fhi+B\r)\rl}-\intg{\l(\fhi+B\r)\rll}}}{\theta\l(\rl,\rll\r)} 
$$
$$
\qquad \qquad \leq \f{\l(e^{\theta\l(\rl,\rll\r)}-1\r)\sup\rll}{\theta\l(\rl,\rll\r)}.
$$
If $\theta\l(\rl,\rll\r)<1$ then $\f{e^{\theta\l(\rl,\rll\r)}-1}{\theta\l(\rl,\rll\r)} <2$ and as $\rll \in \Dnorm{\gm}$ we have 
$$
\f{\modulo{\intg{\fhi\rl}-\intg{\fhi\rll}}}{\theta\l(\rl,\rll\r)}
\leq 2(1+\kappa diam(M)^\alpha)
$$
Now, if $\theta\l(\rl,\rll\r) \geq 1$ we obtain
$$
\begin{array}{rcl}
\f{\modulo{\intg{\fhi\rl}-\intg{\fhi\rll}}}{\theta\l(\rl,\rll\r)}
& \leq & \modulo{\intg{\l(\fhi+B\r)\rl}-\intg{\l(\fhi+B\r)\rll}}\\
& \leq & \intg{\modulo{\l(\fhi+B\r)\l(\rl-\rll\r)}}\\
&\leq & \sup\l(\fhi+B\r)\l(\sup\rl+\sup\rll\r) \leq \, 2(1+\kappa diam(M)^\alpha)\\
\end{array}
$$
and this implies
$$
\sup_{\rl,\rll \in \Dnorm{\gm}}\l\{\f{\l|\intg{\fhi\rl}-\intg{\fhi\rll}\r|}{\theta\l(\rl,\rll\r)}\r\} < \infty.
$$
The choice of $K_2=K_2(\fhi)$ is similar of what we have done for (C).
On condition (A) , since $\fhi$ is  continuous with compact domain, there exists  $K_1=K_1(\fhi)$ such that $\fhi+K_1>0$ and so $\intg{\l(\fhi+K_1\r)\rho}>0$, $\forall \gm \in F^s_{loc}$ and $\rho \in \D(\gm)$.
We complete the proof by taking  $K(\fhi)=\max\{K_1,K_2,K_3\}$.

\end{proof}

Now, denote by  $\mg \times \nu$ the measure given by 
$$
\mg \times \nu(\fhi):=\ds\int{\intg{\fhi}}\dn(\gm).
$$
By unicity of the maximal entropy probability measure, we notice that $\mu=\mg \times \nu$, where 
$\nu$ is the maximal entropy probability measure for $g$. 
Indeed,  let us first show that $\mg\times\nu$ is an $f$-invariant probability. In fact, for all $x \in M$, 
given $\gm=\Pi^{-1}_{\Lambda}(x)$ and $\gmj=\Pi^{-1}_{\Lambda}(x_j)$, with $f(\gmj)\subset \gm$ and $g(x_j)=x$ 
we have  $\mg(A)=\f{1}{p}\sumjap\mgj\l(f^{-1}(A)\r)$. 
By Castro-Varandas[CV13], $\nu$   is an  eigenmeasure of the adjoint $\L^*_{g,\phi}$ given by
$$
\L_{g,\phi}(\fhi)(x):=\ds\sum_{g(x_j)=x} e^{\phi(x_j)}\fhi(x_j), 
$$
for constant potential $\phi$.
More precisely, if $r$ is the spectral radius of  $\L^*_{g,\phi}$, which is equal to the degree of $g$,
then  $\L^*_{g,\phi}(\nu)=r\nu$. By normalizing $\L^*_{g,\phi}$ by $r= p$, we obtain for any continuous $\phi$
$$
\ds\int{\fhi(x)}d\nu=\f{1}{r}\int{\fhi(x)}d\L^*_{g,\phi}\nu=\ds\f{1}{r}\int{\L_{g,\phi}(\fhi)(x)}d\nu=\int\f{1}{p}\sumjap{\fhi(x_j)}d\nu.
$$
Therefore, for $A \in \A_0$ we deduce
$$
\begin{array}{rcl}
(\mg \times \nu)(f^{-1}(A))&=&\mg \times \nu(\chi_{f^{-1}(A)})= \ds\int{\intg{\chi_{f^{-1}(A)}}}d\nu\\
&=&\ds\int{\mg\l(f^{-1}(A)\r)}d\nu=\int\f{1}{p}\sumjap{\mgj\l(f^{-1}(A)\r)}d\nu\\
&=&\ds\int{\mg(A)}d\nu=\ds\int{\intg{\chi_{A}}}d\nu = \, \mg \times \nu(A)\\
\end{array}
$$
As we have shown in previous sections, this implies the same equality for any borelian $A$.

Furthermore $\mg\times\nu(A)=\mu(A)$ . Indeed, let $A=\Pi_{\Lambda}^{-1}(A_N)$, with $A_N\in\A_N$.

On the one hand, we have that
$$
\mu(\Pi_{\Lambda}^{-1}(A_N))=\nu\l(\Pi_{\Lambda}\l(\Pi_{\Lambda}^{-1}(A_N)\r)\r)=\nu\l(A_N\r)=\ds\int_{N}\chi_{A_N}d\nu
$$
and on the other hand, 
$$
\mu_\gm\times\nu\l(\Pi_{\Lambda}^{-1}(A_N)\r)=\ds\int\intg{\chi_{\Pi_{\Lambda}^{-1}(A_N)}}d\nu
$$

As $\ds\chi_{\Pi_{\Lambda}^{-1}(A_N)}(x)=\chi_{A_N}(\Pi_{\Lambda}(x))$ and for all $\gm$ 
there exists $x_0 \in N$ 
such that $\gm=\Pi^{-1}_{\Lambda}\l(x_0\r)$.
So
$$
\intg{\chi_{\Pi_{\Lambda}^{-1}(A_N)}(x)}=\intg{\chi_{A_N}(\Pi_{\Lambda}(x))}=\intg{\chi_{A_N}(x_0)}=\chi_{A_N}(x_0)
$$

and then,  $\mg\times\nu(A)=\mu(A)$ for all $A\in\A_0$. Now, given $A \in \A= \ds\bigcup_{n=0}^{\infty} \A_n$, as $\A_n=f^n(\A_0)$, we have that there exist $n \in \N$ and $A_0 \in \A_0$ such that $A=f^n(A_0)$. 
Therefore
$$
\mg\times\nu(A)=\mg\times\nu(f^n(A_0))=\mg\times\nu(A_0).
$$
Since $\mu$ is $f$-invariant, $\mu(A)=\mu(f^n(A_0))=\mu(A_0)$, we conclude that $\mu=\mg \times \nu$.

\begin{teoremaB} \label{teoB}
The measure $\mu$ has exponential decay of de correlations for  H\"older continuous observables.
\end{teoremaB}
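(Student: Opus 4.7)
The strategy is to exploit the duality (\ref{dualidade}),
\[
\int(\varphi\circ f^n)\psi\,d\mu=\int\varphi\,\L^n(\psi)\,d\mu,
\]
together with the strict invariance of the cone $C(b,c,\alpha)$ under $\L$ (Proposition~\ref{InvarianciaDoCone}), to show that $\L^n\psi$ behaves, when tested against $\varphi$, as a small $O(\sigma^n)$-perturbation of the constant $\int\psi\,d\mu$. Since the correlation is unaffected by additive constants in either $\varphi$ or $\psi$, I first replace $\psi$ by $\psi+K(\psi)$ via Lemma~\ref{lesoma} so that $\psi\in C(b,c,\alpha)$, and then replace $\varphi$ by $\varphi+K'$ for $K'$ large enough (depending only on $\holder{\varphi}$, $\lambda$, $\kappa$) so that on every strong stable leaf $\gamma$ the normalized density
\[
\rho_\varphi^\gamma:=\frac{\varphi+K'}{\intg{(\varphi+K')}}
\]
lies in $\D(\gamma,\lambda\kappa)\cap\Dnorm{\gamma}$; by item~(2) of Lemma~\ref{invconeaux}, this forces $\theta(\rho_\varphi^\gamma,1)\leq 2\log((1+\lambda)/(1-\lambda))$ uniformly in $\gamma$.

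Next I use the disintegration $\mu=\mu_\gamma\times\nu$ established at the end of the preceding section to write
\[
\int\varphi\,\L^n\psi\,d\mu=\int_N\intg{\varphi\,\L^n\psi}\,d\nu(\gamma).
\]
Since $\L^n\psi\in C(\sigma^n b,\sigma^n c,\alpha)$ by Proposition~\ref{InvarianciaDoCone}, condition~(B) applied on each leaf to the pair of normalized densities $\rho_\varphi^\gamma,\ 1\in\Dnorm{\gamma}$ gives, after unwinding the normalization and restoring $\varphi=(\varphi+K')-K'$, the leafwise approximate factorization
\[
\intg{\varphi\,\L^n\psi}=\intg{\varphi}\cdot\intg{\L^n\psi}+O(\sigma^n),
\]
uniformly in $\gamma$, with the implicit constant depending only on $b$, $\lambda$, $\kappa$, and $\holder{\varphi}$.

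To identify the common value of the leaf integrals I set $I_n(\gamma):=\intg{\L^n\psi}$. Condition~(C) applied to $\L^n\psi$ yields $|I_n(\gamma)-I_n(\tilde\gamma)|\leq\sigma^n c\,d(\gamma,\tilde\gamma)^\alpha\inf_\gamma I_n(\gamma)$ for neighbouring leaves; chaining through a finite cover of the compact set $N$ propagates this to give $I_n(\gamma)=\bar I_n+O(\sigma^n)$ uniformly in $\gamma$. The $f$-invariance of $\mu$ then gives $\int_N I_n\,d\nu=\int\L^n\psi\,d\mu=\int\psi\,d\mu$, hence $\bar I_n=\int\psi\,d\mu+O(\sigma^n)$. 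Integrating the leafwise factorization against $d\nu$ now produces
\[
\int\varphi\,\L^n\psi\,d\mu=\int\varphi\,d\mu\cdot\int\psi\,d\mu+O(\sigma^n),
\]
which, combined with duality, proves the theorem with $\tau:=\sigma$.

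The step I expect to be the main obstacle is the uniform lower bound on $\inf_{\rho\in\Dnorm{\gamma}}\intg{\L^n\psi\,\rho}$ that appears as a prefactor in the (B)-estimate: for the exponential gain $\sigma^n$ not to be wiped out, this infimum must stay bounded away from zero uniformly in both $n$ and $\gamma$. This should follow from the fact that $\L^n\psi$ remains in the cone with positive leafwise means $I_n(\gamma)$, together with the bounded-distortion estimate (\ref{DesDasDensidades}) controlling the ratio of integrals against different normalized densities; nevertheless, the careful bookkeeping required to guarantee that $I_n(\gamma)$ itself stays uniformly bounded below (so that the identification $\bar I_n\to\int\psi\,d\mu$ is not vacuous) is the delicate point, and it is precisely what forces the cone reductions performed in the first paragraph.
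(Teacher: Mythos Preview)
Your argument hinges on the claim that $\L^n\psi\in C(\sigma^n b,\sigma^n c,\alpha)$, obtained by iterating Proposition~\ref{InvarianciaDoCone}. This is where the proof breaks down. If you look at the proof of that proposition, the estimate for condition~(B) has the form
\[
\frac{\modulo{\intg{\L(\fhi)\rl}-\intg{\L(\fhi)\rll}}}{\infimo{\rho}{\intg{\L(\fhi)\rho}}\,\theta(\rl,\rll)}
\;\le\;\tilde\sigma_1\,b+2M(\kappa,\alpha),
\]
with an \emph{additive} constant $2M(\kappa,\alpha)$ independent of $b$. Iterating gives $b_n=\tilde\sigma_1\,b_{n-1}+2M$, which converges to the fixed point $2M/(1-\tilde\sigma_1)$ and does \emph{not} tend to zero. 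Thus the $b$-constant in the cone never becomes small, and your leafwise factorization
\[
\intg{\varphi\,\L^n\psi}=\intg{\varphi}\cdot\intg{\L^n\psi}+O(\sigma^n)
\]
is unjustified: the $O(\sigma^n)$ you write is actually only $O(1)$. (For condition~(C) the contraction is indeed purely multiplicative, so that part of your argument is fine, but it is not enough on its own.)

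The paper circumvents this precisely by invoking Proposition~\ref{DiametroFinito}: the image $\L(C(b,c,\alpha))$ has finite $\Theta$-diameter $\Delta$, and then Theorem~\ref{t.Birkhoff} (Birkhoff) gives the genuine exponential contraction $\Theta(\L^n\psi,\L^n 1)=\Theta(\L^n\psi,1)\le\Delta\tau^{n-1}$ with $\tau=1-e^{-\Delta}$. This is converted into the estimate $\frac{\intg{\varphi\L^n\psi}}{\intg{\varphi}}\le e^{\Theta(\L^n\psi,1)}$ (using that $\varphi|_\gamma\in\D(\gamma)$ plays the role of a test density for the cone of positive functionals), and the normalization $\int\psi\,d\mu=1$ provides a leaf where $\alpha_+(\L^n\psi,1)\le 1$. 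The finite-diameter step is essential and cannot be replaced by naive iteration of the strict inclusion; your proposal uses the latter and omits the former.
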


\begin{proof}

We should prove that for $\alpha$-H\"older observables $\fhi,\psi$ , there exist $0<\tau<1$ and $K(\varphi,\psi)>0$ such that
\begin{equation*}
\l|\int (\varphi\circ f^n) \psi \dm - \int \varphi \dm\int \psi \dm \r|
	\leq K(\varphi,\psi)\cdot\tau^n, \forall n \geq 1.
\end{equation*}
By (\ref{dualidade}) this is equivalent to prove
\begin{equation*}
\l|\int \varphi \Lt^n\l(\psi\r) \dm - \int \varphi \dm \int \psi \dm\r|
	\leq K(\varphi,\psi)\cdot\tau^n,
	\quad \text{for all $n\ge 1$}.
\end{equation*}
We start with the case $\fhi_{|\gm} \in \D\l(\gm\r)$,  $\forall \gm \in \F^s_{loc}$ and $\psi \in C(b,c,\alpha)$. We also assume $\int{\fhi}\dm \neq 0$ and $\int{\psi}\dm=1$.

Recall that $\L(1)=1\circ f=1$. Since $\fhi_{|\gm} \in \D(\gm)$ for all $\gm \in\F^s_{loc}$  by (A) we have
$$\f{\intg{\fhi\Lt^n(\psi)}}{\intg{\fhi}} \leq \beta_+\l(\Lt^n(\psi),1\r)$$
Since $\psi$ is normalized  we have $\ds\int{\Lt^n(\psi)}\dm=\ds\int{\psi}\dm=1$.
As $\mu=\mg\times\nu$
$$
\ds\int{\l(\intg{\Lt^n(\psi)}\r)}d\nu=\ds\int{\Lt^n(\psi)}\dm=1
$$
and so there exists $\gmc$ such that
$\intgc{\Lt^n(\psi)}\leq 1$. We conclude that
$$
\alpha_+\l(\Lt^n(\psi),1\r) \leq \f{\intgc{\Lt^n(\psi)}}{\intgc{}} =\intgc{\Lt^n(\psi)} \leq 1
$$
and for all $\gm \in\F^s_{loc}$
$$
\f{\intg{\fhi\Lt^n(\psi)}}{\intg{\fhi}} \leq \f{\beta_+\l(\Lt^n(\psi),1\r)}{\alpha_+\l(\Lt^n(\psi),1\r)} \leq e^{\Theta_+\l(\Lt^n(\psi),1\r)} \leq  e^{\Theta\l(\Lt^n(\psi),1\r)}.
$$
By proposition $\ref{InvarianciaDoCone}$ and by proposition $\ref{DiametroFinito}$,  since the cone $C\l(\sigma b,\sigma c,\alpha\r)$ has $\Theta$-diameter less or equal than $\Delta$, it follows from proposition $\ref{t.Birkhoff}$ that  $\exists \, 0<\tau<1$ such that $\forall \fhi,\psi \in C\l(b,c,\alpha\r)$ we have $\Theta(\Lt^n(\fhi),\Lt^n(\psi)) \leq \Delta \tau^{n-1}$. In consequence,
$$
\f{\ds\int{\fhi\Lt^n(\psi)}\dm}{\ds\int{\fhi}\dm}=\f{\ds\int{\intg{\fhi\Lt^n(\psi)}}d\nu}{\ds\int{\intg{\fhi}}d\nu}\leq e^{\Theta\l(\Lt^n(\psi),1\r)} \leq e^{\Delta \tau^{n-1}}.
$$
Note now that $\ds\lim_{n \to \infty}\f{e^{\Delta\tau^{n-1}}-1}{\tau^n}=\f{\Delta}{\tau}$. So, there exists $\tilde{\Delta}>0$ 
such that $e^{\Delta\tau^{n-1}}-1 < \tilde{\Delta}\tau^n$, for all $n \in \N$. This implies that
$$
\l|\ds\int{\fhi}\dm\r|\l|\f{\ds\int{\fhi\Lt^n(\psi)}\dm}{\ds\int{\fhi}\dm}-1\r| \leq \modulo{\ds\int{\fhi}\dm}\l(e^{\Delta \tau^{n-1}}-1\r) \leq \modulo{\ds\int{\fhi}\dm}\tilde{\Delta}\tau^n
$$
If $\int{\psi}\dm \neq 1$ then

\begin{equation*}
\begin{array}{rcl}
	\l|\ds\int\varphi \Lt^n\l(\psi\r) \dm - \ds\int\varphi \dm \ds\int\psi \dm\r|
	&=&
	\l|\ds\int{\psi}\dm\r|\l|\ds\int\varphi \Lt^n\l(\f{\psi}{\int{\psi}\dm}\r) \dm - \ds\int\varphi \dm \r|\\
	&\leq&
	\l|\ds\int{\psi}\dm\r|\modulo{\ds\int{\fhi}\dm}\tilde{\Delta}\tau^n\\
\end{array}	
\end{equation*}
for all $n \geq 1$.

 By lemma \ref{lesoma} given  an $\alpha$-H\"older continuous function $\psi$, there exists  $K(\psi)>0$, such that $\psi+K(\psi) \in C(b,c,\alpha)$. 
 Therefore
$\psi=\psi+K(\psi)-K(\psi)$ and noting that $\int{\fhi\L^n(K(\psi))}\dm = \int{\fhi}\dm\int{K(\psi)}\dm$ we obtain
\begin{equation*}
\begin{array}{rl}
	\l|\ds\int\varphi \Lt^n\l(\psi\r) \dm - \ds\int\varphi \dm \ds\int\psi \dm\r|

	=&
	\l|\ds\int\varphi \Lt^n\l(\psi+K(\psi)\r) \dm -\int{\fhi}\dm\int{\l(\psi+K(\psi)\r)}\dm\r|\\
	\leq&
	\l(\modulo{\ds\int{\psi}\dm}+K(\psi)\r)\modulo{\ds\int{\fhi}\dm}\tilde{\Delta}\tau^n\\
\end{array}	
\end{equation*}
Now, given an  $\alpha$-H\"older $\fhi$, note that  there exists $K(\fhi)  \in \R$ such that
 $\fhi_{|\gm}+K(\fhi)+B \in \D\l(\gm\r)$ for all  $\gm \in \F^s_{loc}$ and
  $\ds\int{\fhi+K(\fhi)}+B\dm>0$, for all $B>0$ . Indeed, 
$$
\holder{\fhi_{|\gm}+K(\fhi)} < \kappa \inf\l\{\fhi_{|\gm}+K(\fhi)\r\}
$$
if, and only if,
$$
K(\fhi) > \f{\holder{\fhi_{|\gm}}}{\kappa}-\inf\l\{\fhi_{|\gm}\r\}
$$
Set $K(\fhi) = \ds\sup_{\gm \in \F^s_{loc}}\l\{\modulo{\f{\holder{\fhi_{|\gm}}}{\kappa}}\r\}-\inf{\fhi}$. 
Observe that $K(\fhi)\leq\f{\holder{\fhi}}{\kappa}-\inf{\fhi}<\infty$.  
As $\fhi_{|\gm}+K(\fhi)\geq \f{\holder{\fhi_{|\gm}}}{\kappa} \geq 0$ for all $\gm \in \F^s_{loc}$, it follows that
 $\fhi_{|\gm}+K(\fhi)+B \in \D\l(\gm\r)$ and $\ds\int{(\fhi+K(\fhi))\dm}+B>0$,  $\forall B>0$. Analogously to the last case
\begin{equation*}
\begin{array}{rl}
	\l|\ds\int\varphi \Lt^n\l(\psi\r) \dm - \ds\int\varphi \dm \ds\int\psi \dm\r|
	\leq&
	\l(\modulo{\ds\int{\psi}\dm}+K(\psi)\r)\l(\modulo{\ds\int{\fhi}\dm}+K(\fhi)+B\r)\tilde{\Delta}\tau^n\\
\end{array}	
\end{equation*}
 and since $B$ is any positive number
\begin{equation*}
\begin{array}{rl}
	\l|\ds\int\varphi \Lt^n\l(\psi\r) \dm - \ds\int\varphi \dm \ds\int\psi \dm\r|
	\leq&
	\l(\modulo{\ds\int{\psi}\dm}+K(\psi)\r)\l(\modulo{\ds\int{\fhi}\dm}+K(\fhi)\r)\tilde{\Delta}\tau^n\\.
\end{array}	
\end{equation*}
Since $\modulo{\ds\int{\fhi\dm}}-\inf{\fhi}\geq 0$, we have $\modulo{\ds\int{\fhi}\dm}+K(\fhi) \geq 0$. By taking 
$$
K(\fhi,\psi):=\l(\modulo{\ds\int{\psi}\dm}+K(\psi)\r)\l(\modulo{\ds\int{\fhi}\dm}+K(\fhi)\r)\tilde{\Delta},
$$
we conclude the proof of the Theorem.

\end{proof}

\section{Central Limit Theorem}

Let $\G$ be the Borel $\sigma$-algebra of $M$ and let $\G_n := f^{-n}(\G)$ be a nonincreasing family of  $\sigma$-algebras. 
A function $\xi : M \to \R$ is $\G_n$-measurable if, and only if, $\xi=\xi_n \circ f^n$ for some  $\G$- measurable $\xi_n$. 
Let $L^2(\G_n)= \{\xi \in L^2\l(\mu\r); \xi$ is  $\G_n$-measurable $\}$. Note that $\L^2(\G_{n+1}) \subset \L^2(\G_n)$ for each $n\geq 0$. Given
$\fhi \in L^2(\mu)$, we will denote by $\E(\fhi|\G_n)$ the $L^2$-orthogonal projection of $\fhi$ on $L^2(\G_n)$.

We will apply   the following adaption of a result due to Gordin,  whose proof can be found in [Vi97]:

\begin{theorem} \label{TeoremaDeGordin}[Gordin.]
Let $\l(M,\F,\mu\r)$ be a probability space, and let $\phi \in L^2(\mu)$ be such that $\int{\phi}\dm=0$. 
Assume that  $f:M \seta M$ is an invertible bimeasurable map and that $\mu$ is an $f$-ergodic invariant probability. 
Let $\F_0 \subset \F$ such that $\F_n:=f^{-n}(\F_0)$, $n \in \Z$, is a nonincreasing family of $\sigma$-algebras. Define 
$$
\sigma_\phi^2:=\int \phi^2 d\mu + 2\sum\limits_{j=1}^{\infty} \phi\cdot (\phi\circ f^j) \, d\mu.
$$
Assume
$$
\ds\sum_{n=0}^\infty \norma{\E(\phi|\F_n)}_2 < \infty \text{ and } \ds\sum_{n=0}^\infty \norma{\phi-\E(\phi|\F_{-n})}_2 < \infty.  
$$
Then $\sigma_\phi<\infty$ and $\sigma_\phi=0$ if, and only if, $\phi=u\circ f - u$ for some $u \in L^1(\mu)$. Moreover, if $\sigma_\phi>0$ then
 for any interval  $A \subset \R$

\begin{equation*}
\mu\left(x\in M: \frac{1}{\sqrt{n}}\sum\limits_{j=0}^{n-1}
\left(\phi(f^j(x))\right)\in A\right)\to \frac{1}{\sigma_\phi\sqrt{2\pi}}
\int_A e^{-\frac{t^2}{2\sigma_\phi^2}} dt,
\end{equation*}
as $n\to\infty$.
\end{theorem}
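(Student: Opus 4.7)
The strategy is Gordin's classical martingale approximation: I will construct a cohomological decomposition $\phi = \psi + u - u\circ f$ with $u \in L^2(\mu)$ and a ``reverse martingale difference'' $\psi \in L^2(\mu)$, reduce the CLT for $\phi$ to the (reverse) martingale CLT applied to $\psi \circ f^n$, and then identify the variance with $\sigma_\phi^2$.

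\textbf{Step 1: Preliminary identities.} Let $U\fhi:=\fhi\circ f$ and $E_n\fhi:=\E(\fhi\mid\F_n)$. Since $\F_{n+1}=f^{-1}(\F_n)$, a standard computation gives the commutation $UE_n=E_{n+1}U$ on $L^2(\mu)$. This identity will be used repeatedly. Note also that as $\F_n$ is decreasing, $E_n\phi$ is an $L^2$-bounded reverse martingale with $L^2$-limit the projection onto $L^2(\cap_n\F_n)$, which, combined with $f$-ergodicity, is the constant $\int\phi\,d\mu=0$; symmetrically $\phi-E_{-n}\phi\to 0$ in $L^2$.

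\textbf{Step 2: Martingale--coboundary decomposition.} Define
$$u:=\sum_{n=1}^{\infty}E_n\phi\;+\;\sum_{n=0}^{\infty}\bigl(\phi-E_{-n}\phi\bigr)\circ f^{-(n+1)},$$
where the first sum converges in $L^2(\mu)$ by the first summability hypothesis and the second by the second (using that $\mu$ is $f$-invariant so $U$ is an isometry). Set
$$\psi:=\phi-(u\circ f-u).$$
Using the identity $UE_n=E_{n+1}U$ and telescoping one checks (a) $\psi$ is $\F_0$-measurable, and (b) $E_1\psi=0$, so $\psi\in L^2(\F_0)\ominus L^2(\F_1)$. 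This is the main algebraic step and also the principal obstacle: the two summability hypotheses must match up with the forward/backward halves of the coboundary so that both the measurability and the orthogonality $E_1\psi=0$ hold simultaneously.

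\textbf{Step 3: Reverse martingale CLT and cancellation of the coboundary.} For $X_n:=\psi\circ f^n$, the commutation relation gives $E[X_n\mid\F_{n+1}]=(E_1\psi)\circ f^n=0$, so $(X_n)_{n\ge 0}$ is a stationary ergodic reverse martingale difference sequence with respect to $(\F_n)$. By Birkhoff's ergodic theorem, $\tfrac{1}{n}\sum_{j=0}^{n-1}X_j^2\to \E[\psi^2]$ $\mu$-a.s., which with stationarity supplies Lindeberg's condition. The stationary ergodic martingale CLT (Billingsley--Ibragimov, in its reverse-martingale form) yields
$$\frac{1}{\sqrt{n}}\sum_{j=0}^{n-1}X_j\ \xrightarrow{\,d\,}\ \mathcal{N}\bigl(0,\E[\psi^2]\bigr).$$
The coboundary contributes $\sum_{j=0}^{n-1}(u\circ f^{j+1}-u\circ f^j)=u\circ f^n-u$, which is bounded in $L^2$ by $2\|u\|_2$; dividing by $\sqrt{n}$ it tends to $0$ in probability, so by Slutsky $\tfrac{1}{\sqrt{n}}\sum_{j=0}^{n-1}\phi\circ f^j$ has the same weak limit.

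\textbf{Step 4: Identification of $\sigma_\phi$ and the degenerate case.} Expanding $\|\sum_{j=0}^{n-1}\phi\circ f^j\|_2^2/n$ and sending $n\to\infty$, the two series in the definition of $\sigma_\phi^2$ converge (this is implied by the summability hypotheses via Cauchy--Schwarz applied to $\int\phi\cdot(\phi\circ f^j)\,d\mu=\int E_j\phi\cdot\phi\,d\mu$), and the limit matches $\E[\psi^2]$ by orthogonality of the martingale differences, giving $\sigma_\phi<\infty$ and $\sigma_\phi^2=\E[\psi^2]$. Thus $\sigma_\phi=0$ iff $\psi\equiv 0$ iff $\phi=u\circ f-u$ with $u\in L^2(\mu)\subset L^1(\mu)$; conversely such a coboundary has telescoping partial sums, forcing $\sigma_\phi=0$. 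When $\sigma_\phi>0$, Step 3 gives the stated Gaussian limit on every interval $A\subset\real$.
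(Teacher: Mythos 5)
The paper itself does not prove this statement: it is quoted as an adaptation of Gordin's theorem with the proof delegated to [Vi97]. Your plan — the classical martingale--coboundary decomposition followed by the ergodic reverse-martingale CLT — is indeed the standard route, so the strategy is right. The problem is that Step 2, which you yourself single out as ``the main algebraic step and also the principal obstacle,'' is carried out incorrectly: the explicit $u$ you write down does not produce a martingale difference. The correct construction must twist each term by the matching power of the Koopman operator $U\fhi=\fhi\circ f$ so that the telescoping in $u\circ f-u$ aligns the $\sigma$-algebras; up to sign conventions one must take
$$
u=\sum_{n\ge 1}\E(\phi|\F_n)\circ f^{-n}\;-\;\sum_{n\ge 0}\bigl(\phi-\E(\phi|\F_{-n})\bigr)\circ f^{\,n},
$$
which yields $\psi=\phi-(u\circ f-u)=\sum_{n\in\Z}(P_n\phi)\circ f^{-n}$ with $P_n=\E(\cdot|\F_n)-\E(\cdot|\F_{n+1})$, each summand lying in $L^2(\F_0)\ominus L^2(\F_1)$. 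With your untwisted $u=\sum_{n\ge1}\E(\phi|\F_n)+\sum_{n\ge0}(\phi-\E(\phi|\F_{-n}))\circ f^{-(n+1)}$, both of your claims (a) and (b) fail. Test it on $\phi\in L^2(\F_1)\ominus L^2(\F_2)$: then $\E(\phi|\F_n)=\phi$ for $n\le 1$, $=0$ for $n\ge 2$, and $\phi-\E(\phi|\F_{-n})=0$, so your $u=\phi$ and $\psi=2\phi-\phi\circ f$, whence $\E(\psi|\F_1)=2\phi-\phi\circ f\neq 0$; the correct choice $u=\phi\circ f^{-1}$ gives $\psi=\phi\circ f^{-1}\in L^2(\F_0)\ominus L^2(\F_1)$ as required. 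Worse, for general $\phi$ your second sum contributes to $\psi$ the terms $\bigl(\E(\phi|\F_{-n})-\E(\phi|\F_{-(n-1)})\bigr)\circ f^{-n}$, which are only $\F_{-2n}$-measurable, so your $\psi$ is not even $\F_0$-measurable and Step 3 cannot be run. Since you never carry out the verification of (a) and (b) but only assert that ``one checks'' them, this is a genuine gap rather than a typo repaired by the rest of the argument.

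Two smaller points. Ergodicity of $f$ does not make $\bigcap_n\F_n$ trivial, so your identification of $\lim_n\E(\phi|\F_n)$ with $\int\phi\,d\mu$ is unjustified; it is also unnecessary, because the summability hypothesis already forces $\norma{\E(\phi|\F_n)}_2\to 0$ (and likewise $\norma{\phi-\E(\phi|\F_{-n})}_2\to 0$). In Step 4 the identity $\int\phi\cdot(\phi\circ f^j)\,d\mu=\int\E(\phi|\F_j)\cdot\phi\,d\mu$ is false unless $\phi$ is $\F_0$-measurable, since $\phi\circ f^j$ need not be $\F_j$-measurable; to sum the correlations one should first split $\phi$ into $\E(\phi|\F_0)$ and $\phi-\E(\phi|\F_0)$ and use both summability hypotheses. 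Steps 3 and 4 are otherwise standard and would go through once Step 2 is repaired with the twisted $u$ above.
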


Let $\F_0$ the $\sigma$-algebra whose elements are Borelian subsets  of $\Lambda$ which are union local stable leaves 
(intersected with $\Lambda$).
Not that, if $\fhi$ é $F_0$-mensurable then $\fhi$ is constant along local stable leaves.

 We start by proving a statement of exponential decay of correlation concerning to function in $L^1\l(F_0\r)$.
 
 \begin{proposition}
Let $\fhi \in L^1\l(F_0\r)$ and $\psi$ be a $\alpha$-H\"older continuous function. 
Then, there exist constants $0<\tau<1$ and $C(\psi)>0$ such that
$$
	\ds\l|\int (\fhi\circ f^n) \psi \dm - \int \fhi \dm\int \psi \dm \r|	\leq C(\psi)\int{\modulo{\fhi}\dm}\cdot\tau^n
$$
for all $n \geq 1$.
\end{proposition}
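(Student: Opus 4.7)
The plan is to exploit the $\mathcal{F}_0$-measurability of $\varphi$ (equivalently, the fact that $\varphi$ is constant along local stable leaves) to reduce the correlation integral to a weighted average on the base $N$, and then invoke the cone contraction for $\Lt$ already established in Propositions~\ref{InvarianciaDoCone} and~\ref{DiametroFinito}. The new point compared with Theorem~B is that no H\"older regularity is required of~$\varphi$, only that it factors through~$\Pi_\Lambda$.

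First I would write $\varphi = \varphi_0 \circ \Pi_\Lambda$ with $\varphi_0 \in L^1(\nu)$, noting that $\int |\varphi|\, d\mu = \int |\varphi_0|\, d\nu$. By the duality \eqref{dualidade} and the product identification $\mu = \mg \times \nu$ proved in Section~8, together with the constancy of $\varphi$ on each stable leaf $\gamma = \Pi_\Lambda^{-1}(y)$,
\begin{equation*}
\int (\varphi \circ f^n)\psi\, d\mu \;=\; \int \varphi\, \Lt^n(\psi)\, d\mu \;=\; \int_N \varphi_0(y)\, \Psi_n(y)\, d\nu(y),
\end{equation*}
where $\Psi_n(y) := \int_{\Pi_\Lambda^{-1}(y)} \Lt^n(\psi)\, d\mu_{\Pi_\Lambda^{-1}(y)}$. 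The $f$-invariance of $\mu$ also yields the global identity $\int \Psi_n\, d\nu = \int \Lt^n(\psi)\, d\mu = \int \psi\, d\mu$.

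The main step is to prove the uniform leafwise estimate $\|\Psi_n - \int\psi\, d\mu\|_\infty \leq C(\psi)\tau^n$. Using Lemma~\ref{lesoma}, I may replace $\psi$ by $\psi + K(\psi) \in C(b,c,\alpha)$; the constant correction $K(\psi)$ cancels on both sides of the target identity. After normalizing so that $\int\psi\, d\mu = 1$, the identity $\int \Psi_n\, d\nu = 1$ forces the existence of some $y^c \in N$ with $\Psi_n(y^c) \leq 1$, equivalently $\alpha_+(\Lt^n(\psi), 1) \leq 1$. As in the proof of Theorem~B, Propositions~\ref{InvarianciaDoCone} and~\ref{DiametroFinito} then give $\beta_+(\Lt^n(\psi), 1) \leq e^{\Delta \tau^{n-1}}$, so that for every $y \in N$,
\begin{equation*}
|\Psi_n(y) - 1| \;\leq\; e^{\Delta \tau^{n-1}} - 1 \;\leq\; \tilde\Delta\, \tau^n.
\end{equation*}

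Combining the two steps,
\begin{equation*}
\left| \int (\varphi\circ f^n)\psi\, d\mu - \int\varphi\, d\mu \int\psi\, d\mu \right|
\;=\; \left| \int_N \varphi_0 \, \bigl( \Psi_n - \textstyle\int\psi\, d\mu \bigr)\, d\nu \right|
\;\leq\; C(\psi)\, \tau^n \int |\varphi|\, d\mu,
\end{equation*}
which is the desired estimate. The main obstacle I expect is the \emph{absolute} (rather than purely projective) nature of the uniform leafwise bound: the cone contraction by itself only gives that any two leaf-integrals have a ratio tending to~$1$ exponentially, without identifying the common limit with $\int\psi\, d\mu$. Pinning this limit down requires the global anchor $\int \Psi_n\, d\nu = \int\psi\, d\mu$ coming from disintegration. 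Once this is combined with the projective contraction, the $L^1(\mathcal{F}_0)$--H\"older decay follows without any regularity at all being imposed on~$\varphi$.
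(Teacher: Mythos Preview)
Your proposal is correct and rests on the same cone-contraction mechanism as the paper, but the packaging differs slightly. The paper observes that $\varphi$ constant on leaves forces $|\varphi|_{\gamma}|_{\alpha}=0$, so for $\varphi\geq 0$ the constant $K(\varphi)$ from Lemma~\ref{lesoma} reduces to $-\inf\varphi$; plugging this into the explicit Theorem~B bound $\bigl(|\int\psi\,d\mu|+K(\psi)\bigr)\bigl(|\int\varphi\,d\mu|+K(\varphi)\bigr)\tilde\Delta\tau^n$ and using $|\int\varphi\,d\mu|-\inf\varphi\leq\int|\varphi|\,d\mu$ gives the claim, with the general case handled by the splitting $\varphi=\varphi^+-\varphi^-$. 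Your route instead isolates the leafwise functional $\Psi_n$ and bounds $\|\Psi_n-\int\psi\,d\mu\|_\infty$ directly, which makes the $L^1$-dependence on $\varphi$ immediate and avoids the positive/negative decomposition altogether. Both arguments use exactly the same two ingredients you identify: the anchor $\int\Psi_n\,d\nu=\int\psi\,d\mu$ coming from the disintegration $\mu=\mu_\gamma\times\nu$, and the projective contraction $\Theta(\Lt^n(\psi),1)\leq\Delta\tau^{n-1}$ from Propositions~\ref{InvarianciaDoCone} and~\ref{DiametroFinito}. One small bookkeeping point: to get the two-sided bound $|\Psi_n(y)-1|\leq e^{\Delta\tau^{n-1}}-1$ you implicitly also need the companion inequality $\Psi_n(y^d)\geq 1$ for some $y^d$ (giving $\beta_+\geq 1$, hence $\alpha_+\geq e^{-\Delta\tau^{n-1}}$), which follows from the same mean-value argument; the paper's proof of Theorem~B is equally terse on this symmetric half.
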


\begin{proof}

Since $\fhi$ is $F_0$-measurable,  it is constant restricted to local stable leaves, so, $\holder{\fhi|_{\gm}}=0$, $\forall \gm \in \F^s_{loc}$. 
Suppose $\fhi \geq 0 $ and let $K(\fhi)$ and $K(\psi)$ as in the proof of Th. \ref{thB}. 
Therefore   
$$
	K(\fhi) = \ds\sup_{\gm \in \F^s_{loc}}\l\{\modulo{\f{\holder{\fhi_{|\gm}}}{\kappa}}\r\}-\inf{\fhi} = -\inf{\fhi}
$$
Since $\ds\modulo{\int{\fhi\dm}}-\inf{\fhi} \leq \int{\modulo{\fhi}\dm}$, just as in the proof of Th. \ref{thB}, it follows that
$$
	\ds\l|\int (\fhi\circ f^n) \psi \dm - \int \fhi \dm\int \psi \dm \r|	\leq \l(\modulo{\ds\int{\psi}\dm}+K(\psi)\r)\int{\modulo{\fhi}\dm}\cdot\tau^n.
$$
Now, we can write  $\fhi=\fhi^+-\fhi^-$ where $\fhi^{\pm}=\f{1}{2}\l(\modulo{\fhi}\pm \fhi\r)$. Noting that
 $\ds\int{\modulo{\fhi^{\pm}}\dm} \leq \int{\modulo{\fhi}\dm}$ from linearity of  the integral we obtain
$$
	\ds\l|\int (\fhi\circ f^n) \psi \dm - \int \fhi \dm\int \psi \dm \r|	\leq C(\psi)\int{\modulo{\fhi}\dm}\cdot\tau^n
$$
with $C(\psi):=2\l(\modulo{\ds\int{\psi}\dm}+K(\psi)\r)$.
\end{proof}

As a consequence of the proposition we are able to prove:
\begin{lemma}
For every H\"older continuous function $\fhi$ with $\ds\int{\fhi}\dm=0$  there is $R=R(\fhi)$ 
such that $\norma{\E(\fhi|\F_n)}_2 \leq R\tau^n$
for all $n\geq 0$.
\end{lemma}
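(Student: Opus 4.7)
The plan is to exploit the duality between conditional expectation and the decay of correlations just proved for observables $\fhi \in L^1(\F_0)$.

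First I would use the defining property of $\E(\fhi|\F_n)$ as the $L^2$-orthogonal projection onto $L^2(\F_n)$, namely
$$
\norma{\E(\fhi|\F_n)}_2^2 = \int \E(\fhi|\F_n)\cdot \E(\fhi|\F_n)\dm = \int \fhi \cdot \E(\fhi|\F_n)\dm,
$$
where the second equality holds because $\E(\fhi|\F_n)$ is itself $\F_n$-measurable. Next I would apply the characterization given at the start of the section: since $\E(\fhi|\F_n) \in L^2(\F_n)$, there exists an $\F_0$-measurable $\psi_n$ such that $\E(\fhi|\F_n) = \psi_n \circ f^n$. Note $\psi_n \in L^2(\mu)$ with $\norma{\psi_n}_2 = \norma{\E(\fhi|\F_n)}_2$ by $f$-invariance of $\mu$, and in particular $\psi_n \in L^1(\F_0)$.

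Then I would invoke the preceding proposition with the roles of the test functions swapped: applied to the H\"older observable $\fhi$ and the $\F_0$-measurable function $\psi_n$, it gives
$$
\modulo{\int (\psi_n \circ f^n)\,\fhi \dm - \int \psi_n \dm \int \fhi \dm} \leq C(\fhi)\int \modulo{\psi_n}\dm \cdot \tau^n.
$$
The hypothesis $\int \fhi \dm = 0$ kills the product term, so combining with the displayed identity for $\norma{\E(\fhi|\F_n)}_2^2$ yields
$$
\norma{\E(\fhi|\F_n)}_2^2 \leq C(\fhi)\,\norma{\psi_n}_1 \cdot \tau^n.
$$
Finally, since $\mu$ is a probability, Cauchy--Schwarz gives $\norma{\psi_n}_1 \leq \norma{\psi_n}_2 = \norma{\E(\fhi|\F_n)}_2$, and dividing through yields
$$
\norma{\E(\fhi|\F_n)}_2 \leq C(\fhi)\,\tau^n,
$$
so one may take $R(\fhi) = C(\fhi)$.

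The only mildly delicate point is checking that the decay-of-correlations proposition actually applies to $\psi_n$ (which is \emph{a priori} just $L^2$ rather than bounded). Since $\E(\fhi|\F_n)$ is an $L^2$-projection of a bounded function, $\norma{\psi_n}_\infty \leq \norma{\fhi}_\infty$, so $\psi_n \in L^1(\F_0)$ without any issue; the previous proposition was stated for $L^1(\F_0)$ observables precisely to accommodate this kind of use. Thus no regularity on $\psi_n$ beyond measurability and integrability is needed, and the argument is complete.
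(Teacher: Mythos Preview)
Your proof is correct and follows essentially the same approach as the paper. The paper phrases it via the variational characterization $\norma{\E(\fhi|\F_n)}_2 = \sup\{\int \xi\fhi\dm : \xi \in L^2(\F_n),\ \norma{\xi}_2=1\}$ and then writes $\xi=\psi\circ f^n$, while you pick the specific optimal test function $\E(\fhi|\F_n)$ itself and divide; these are equivalent, since the supremum is attained precisely at the normalized projection.
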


\begin{proof}

Due to the last proposition, if $\psi \in L^1(F_0)$ and $\ds\int{\psi}\dm \leq 1$, then
$$
\ds\l|\int (\psi\circ f^n) \fhi \dm - \int \psi \dm\int \fhi \dm \r|	\leq C(\fhi)\cdot\tau^n.
$$

As $\norma{\psi}_1 \leq \norma{\psi}_2$ and $\ds\int{\fhi}\dm=0$ we have

$$
\begin{array}{rcl}
\norma{\E(\fhi|\F_n)}_2
&=&
\sup\l\{\ds\int\xi\fhi\dm; \xi \in L^2(\F_n), \norma{\xi}_2=1\r\}\\
&=&\sup\l\{\ds\int\l(\psi \circ f^n\r)\fhi\dm; \psi \in L^2(F_0), \norma{\psi}_2=1\r\} \leq
R\l(\fhi\r)\tau^n
\end{array}
$$
\end{proof}

Now, we can prove:

\begin{teoremaC}{\textbf{(Central Limit Theorem)}}

Let $\mu$ be the maximal entropy probability for $f:\Lambda \to \Lambda$, as in (\ref{contextoprincipal}). 
Given a H\"older continuous function $\varphi$  and  
$$
\sigma_\varphi^2:=\int \phi^2 d\mu + 2\sum\limits_{j=1}^{\infty}\int \phi\cdot (\phi\circ f^j) \, d\mu,
\quad \text{ with } \quad \phi=\varphi-\int \varphi \, d\mu.
$$
Then $\sigma_\varphi<\infty$ and $\sigma_\varphi=0$ if, and only if,  $\varphi=u\circ f - u$ for some $u \in L^1(\mu)$. 
Moreover, if $\sigma_\varphi>0$ then for all interval $A\subset\real$
\begin{equation*}
	\lim_{n\to\infty}\mu\left(x\in M: \frac{1}{\sqrt{n}}\sum\limits_{j=0}^{n-1}
	\left(\varphi(f^j(x))-\int \varphi d\mu\right)\in A\right)= \frac{1}{\sigma_\varphi\sqrt{2\pi}}
	\int_A e^{-\frac{t^2}{2\sigma_\varphi^2}} dt.
\end{equation*}
\end{teoremaC}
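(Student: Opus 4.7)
The plan is to apply Gordin's Theorem (Theorem \ref{TeoremaDeGordin}) to the centered observable $\phi = \varphi - \int \varphi\, d\mu$ with respect to the decreasing family of $\sigma$-algebras $\F_n = f^{-n}(\F_0)$, where $\F_0$ is the $\sigma$-algebra generated by unions of local stable leaves intersected with $\Lambda$ (as already set up in the text). Ergodicity of $\mu$ follows from Theorem A (uniqueness implies ergodicity, since any ergodic component of $\mu$ would itself be a maximizing measure). Hence it suffices to verify the two summability conditions
\[
\sum_{n\geq 0}\norma{\E(\phi|\F_n)}_2 < \infty
\quad\text{and}\quad
\sum_{n\geq 0}\norma{\phi-\E(\phi|\F_{-n})}_2 < \infty.
\]

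The first sum is essentially already done: the lemma immediately preceding the theorem statement gives $\norma{\E(\phi|\F_n)}_2 \leq R(\phi)\,\tau^n$ with $0<\tau<1$, so $\sum_n \norma{\E(\phi|\F_n)}_2$ is bounded by a convergent geometric series. This is the payoff from the exponential decay of correlations against $L^1(\F_0)$ test functions, which in turn came from the cone contraction established in Section~\ref{s.cones}.

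For the second sum, I would use the contraction in the stable direction together with the H\"older regularity of $\varphi$. Since $\F_{-n} = f^n(\F_0)$ is the $\sigma$-algebra generated by sets of the form $f^n(U)$ with $U$ a union of local stable leaves, the atoms of $\F_{-n}$ (modulo the center-unstable direction) have stable diameter at most $C\,\lambda_s^n$, because $f$ contracts stable leaves by the factor $\lambda_s$. Thus the conditional expectation $\E(\phi|\F_{-n})(x)$ is, up to measure zero, an average of $\phi$ over a set of diameter $O(\lambda_s^n)$ in the $f^n$-iterated stable direction. H\"older continuity of $\varphi$ (and hence of $\phi$) with exponent $\alpha$ gives
\[
|\phi(x) - \E(\phi|\F_{-n})(x)| \leq \holder{\phi}\,(C\lambda_s^n)^\alpha,
\]
so $\norma{\phi-\E(\phi|\F_{-n})}_2 \leq \holder{\phi}\,C^\alpha\,\lambda_s^{\alpha n}$, and the sum converges geometrically. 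The main technical point to double-check here is the claim that the atoms of $\F_{-n}$ really shrink at rate $\lambda_s^n$ in the relevant direction; this is precisely where the uniform contraction of $f$ on $M_y$, assumed in the First Setting, enters.

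Once both summability conditions are in hand, Gordin's theorem yields $\sigma_\phi<\infty$ and, if $\sigma_\phi>0$, the convergence in distribution of $\frac{1}{\sqrt n}\sum_{j=0}^{n-1}\phi\circ f^j$ to $\mathcal{N}(0,\sigma_\phi^2)$, which is exactly the stated conclusion for $\varphi = \phi + \int\varphi\,d\mu$. The degenerate case $\sigma_\phi = 0$ gives the coboundary characterization $\varphi = u\circ f - u$ with $u\in L^2(\mu)$ (Gordin's theorem produces $u\in L^1$; the $L^2$ improvement can be read off from the same martingale construction, or alternatively the paper's statement simply asserts the $L^2$ version because $\phi \in L^2$). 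The expected obstacle is not in applying Gordin—it is a black box at this point—but rather in justifying cleanly that the conditional expectation along the stable foliation inherits the $\lambda_s^n$-contraction rate; one should be careful that the disintegration of $\mu$ along stable leaves (provided by the product structure $\mu = \mu_\gamma \times \nu$ established after Lemma \ref{lesoma}) is what makes the naive pointwise estimate above rigorous in the $L^2$ sense.
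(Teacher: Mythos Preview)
Your proposal is correct and follows essentially the same route as the paper: invoke the preceding lemma for the bound $\norma{\E(\phi|\F_n)}_2 \leq R\tau^n$, use the uniform stable contraction plus H\"older regularity to get $\norma{\phi-\E(\phi|\F_{-n})}_2 \leq \holder{\phi}\,C^\alpha\lambda_s^{\alpha n}$, and feed both geometric series into Gordin's Theorem. The paper's argument for the second sum is in fact slightly terser than yours (it simply notes that $\E(\phi|\F_{-n})$ is constant on each $f^n(\gamma)$ and squeezed between $\inf$ and $\sup$ of $\phi$ there), but the content is identical.
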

\begin{proof}
By the last lemma,  $\ds\sum_{n=0}^\infty \norma{\E(\phi|\F_n)}_2 < \infty$, so the first condition for Gordin's Theorem holds. The second condition  
follows from the  H\"older continuity of $\fhi$. In fact,
$\E(\phi,\F_{-n})$ is constant in each  $n$-image $\eta=f^n(\gm)$ of a stable leaf $\gm$ and
$$
\inf(\phi|_{\gm}) \leq \E(\phi,\F_{-n}) \leq \sup(\phi|_{\gm}).
$$
Since the diameter of  $\eta$  is less $C_s\lambda_s^n$ for some constant $C_s$ which does not depend on  $\gm$, $\lambda_s \in (0,1)$, and $\phi$ is $(A,\alpha)$-H\"older for some constant $A>0$, we obtain that 
$$
\norma{\phi - \E(\phi,\F_{-n}) }_2 \leq \norma{\phi - \E(\phi,\F_{-n}) }_0 \leq AC_s^{\alpha}\lambda_s^{\alpha n}.
$$
which guarantees $\ds\sum_{n=0}^\infty \norma{\phi - \E(\phi,\F_{-n}) }_2 < \infty$. The result then follows
as a consequence of Gordin's Theorem.
\end{proof}

\begin{remark}
We have seen that the invariance a suitable cone of distributions with respect the transfer operator implied in a rather economic way the exponential decay of correlations and the Central Limit. Once we have such cone, one
can define a (basically unique) anisotropic space $E$ associated to such cone. Such space is a Banach space
in which the transfer operator exhibits  a spectral gap.  The procedure to define such space is very simple.
Roughly speaking, such cone $C$ defines a order relation $\prec$  in a Banach Lattice $B$ of bounded functions given by
$$   
v \prec w  \Leftrightarrow w- v \in C
$$
Let $e$ be some function in $B$ such that for any function $\varphi$ in $B$ , there exists a constant $c_\varphi$ such that  
$
-c_\varphi  e \prec \varphi \prec c_\varphi e.
$
For example, in our case, take $e \equiv 1$.
Setting $\|\varphi\| := \inf \{c_\phi; -c_\varphi  e \prec \varphi \prec c_\varphi e \}$, one easily checks 
that this is a norm. The anisotropic space is obtained by completing $B$ with respect this norm. 
This approach, associated with Lasota Yorke estimates instead of projective cones,  was used by Baladi, Gouezel, Liverani, Tsujii,  among others, in several works \cite{ Bal05, BT07, BG10, BL12} to study fine spectral properties of the transfer operator  in hyperbolic contexts. 
\end{remark}

\section{Systems derived from Anosov}
In this section we study  partially hiperbolic dynamics as in the second setting described in section \ref{contexto}. Recall that such setting contains an open class of
 derived from Anosov systems. 
Using the techniques and results in the sections before, \label{secanosov} 
we prove again the strict invariance of a suitable cone of functions by the transfer  operator. Furthermore, from the convergence of such cone,  
we construct a measure exhibiting  exponential decay of  correlations, and satisfying the Central Limit Theorem.

\subsection{Invariant Cones}
Given $x \in \Lambda$ we will denote by $\gm$ the intersection of the local (strong) stable manifold  $x$ with the Markov rectangle  where $x$ belongs in. By the mixing property of the Markov partition, there is no great loss of generality in supposing that for any
rectangles  $R_i$ and $R_j$ of $\cR$ we have 
$$ f(R_i)\cap R_j \neq \emptyset.$$
Otherwise, one can work with some positive iterate of $f$. 
Given $\gm= W^s_{loc}(x) \cap R_i$, for some fixed Markov rectangle $R_i$, by the Markovian property we have 
 $\gm = \ds\bigcup_{j=1}^{p_\gm} f(\gm_j)$, for $\gm_j, j= 1, \dots p_\gm$ corresponding to strong stable manifolds whose
 images by $f$ intersect the interior of $\gm$. 
Observe also that by the Markovian property, all $\gm$ in $R_i$ have pre-images in the same rectangles. In particular,
$p_\gm$ depends only on $R_i$.
This is  important for the calculation of the finite diameter of the cone and slightly modifies the definition  of the auxiliar probabilities $\mu_\ga$  
on the leaves $\gm \in \F^s_{loc}$.

Let $\gm \in \F^s_{loc}$,  $\gm = \ds\bigcup_{j=1}^{p_{\gm}} f(\gm_j)$. Given $n \in \N$, $n \geq 1$, by induction we can write 
$$
\gm = \ds\bigcup_{i_1=1}^{p_{i_0}}\bigcup_{i_2=1}^{p_{i_1}} \cdots \bigcup_{i_n=1}^{p_{i_{n-1}}} f^n(\gm_{i_1 \cdots i_n})
$$
\begin{figure}[!htb]
   \centering
   \includegraphics[scale=0.4]{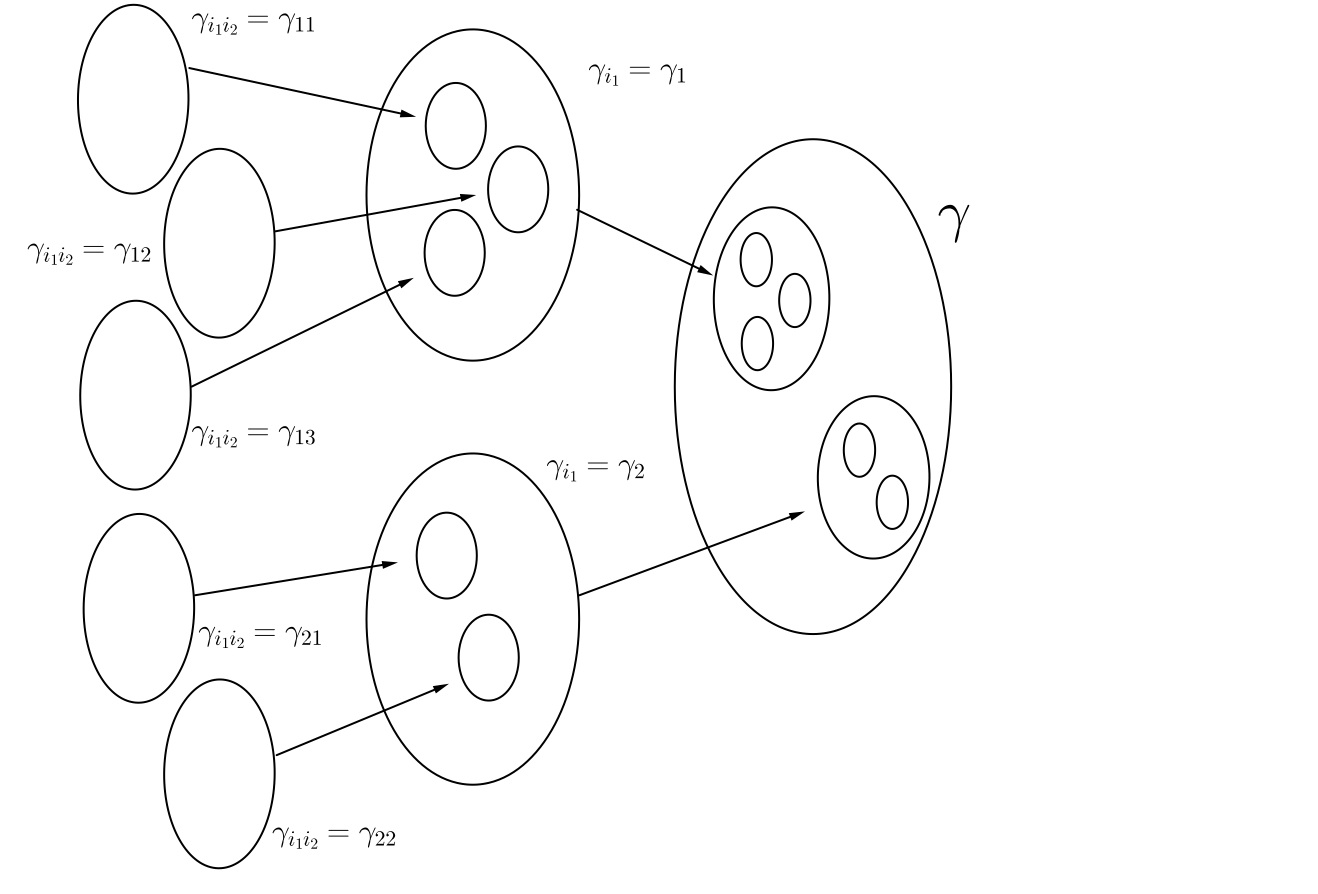}
   \caption{Mass distribution for derived from Anosov systems}
   \label{DistribuicaoDeMassaParcialmenteHiperbolico}
\end{figure}
where $f(\gm_{i_1\cdots i_{n+1}}) \subset \gm_{i_1 \cdots i_{n}}$, 
$\gm_{i_1 \cdots i_{n}} = \ds\bigcup_{i_{n+1}=1}^{p_{i_n}} f(\gm_{i_1 \cdots i_{n+1}})$ and $p_{i_k} , k \in \N$. 
Note also that que $p_{i_0}=p_{\gm}$. Since the last union is disjoint, 
for each $n \in \N$, $n \geq 1$, the sets $f^n(\gm_{i_1 \cdots i_n})$ with $i_k \in \l\{1, \cdots, p_{i_{k-1}}\r\}$, $k \in \N$, $k \geq 1$ define
a partition for $\gm$. So, it is natural to define a  probability measure in $\gm$ as:
$$
\mg\l(f^n\l(\gm_{i_1 \cdots i_n}\r)\r):=\f{1}{p_{i_0}p_{i_1}\cdots p_{i_{n-1}}}
$$
Since $f^n\l(\gm_{i_1 \cdots i_n}\r)$ we have that  $\mg$ is a Borelian measure of $\gm$. 
In fact, It is a  probability measure, as 
$$
\mg(\gm)=\ds\sum_{i_1=1}^{p_{i_0}}\sum_{i_2=1}^{p_{i_1}} \cdots \sum_{i_n=1}^{p_{i_{n-1}}} \mg \l(f^n(\gm_{i_1 \cdots i_n})\r)=
\ds\sum_{i_1=1}^{p_{i_0}}\sum_{i_2=1}^{p_{i_1}} \cdots \sum_{i_n=1}^{p_{i_{n-1}}} \f{1}{p_{i_0}p_{i_1}\cdots p_{i_{n-1}}} =1.
$$
Of course,  such measure can be seen as  a conditional measure on $\Lambda$. 
Let  $A$ be a Borelian subset of $\Lambda$. So,
$$
\mg(A)=\mg(A \cap \gm)=\mg\l(A \cap \ds\bigcup_{j=1}^{p_{\gm}}f(\gm_{j})\r)=\mg\l(\ds\bigcup_{j=1}^{p_{\gm}}\l(A \cap f(\gmj)\r)\r)=\ds\sum_{j=1}^{p_{\gm}}\mg(A \cap f(\gmj))
$$
Defining $\mgj(A):=p_{\gm} \mg(f(A \cap \gm_j))$ we obtain that $\f{1}{p_{\gm}}\mgj(f^{-1}(A))=\mg(A \cap f(\gmj))$ and then
$$
\mg(A)=\f{1}{p_{\gm}}\ds\sum_{j=1}^{p_{\gm}}\mgj(f^{-1}(A))
$$
Moreover,  $\mgj\l(f^n\l(\gm_{j_1 \cdots j_n}\r)\r)=\f{1}{p_{j_0}p_{j_1}\cdots p_{j_{n-1}}}$. In  particular, it is  a probability in $\gmj$.
Note also that $\mg(A \cap f(\gmj))=\f{1}{p_{\gm}}\mgj(f^{-1}(A))$ implies that for any measurable set $A$, 
its characteristic function $\chi_A$ satisfies
$$
\ds\int_{f(\gmj)}\chi_A\dmg=\f{1}{p_{\gm}}\intgj{\chi_A \circ f}
$$
therefore, by the dominated convergence Theorem, for any continuous function  $g:\Lambda \to \R$ one concludes that 
\begin{equation} \label{MudancaDeVariavelAnosov}
\ds\int_{f(\gmj)}g\dmg=\f{1}{p_{\gm}}\intgj{g \circ f}.
\end{equation}

Applying the transfer operator $\L(\fhi)(x)=\fhi(f^{-1}(x))e^{\phi(f^{-1}(x))}$ and using \ref{MudancaDeVariavelAnosov} we obtain
$$
\intg{\L(\fhi)\rho} =\sumjapg \intgj{\fhi\rj}
$$
with $\rj:=\f{1}{p_{\gm}}\rho\circ f e^{\phi}$.
We will adapt the same kind of Main Cone previously defined for a diffeomorphism semiconjugated to a Castro-Varandas map.

The main change is in condition (C). The comparison in condition (C) 
will concern just strong stable leaves in the same rectangle. 
In the same manner as before we define the cone 
$\D(\gm,\kappa)$ of densities $\rho:\gm \seta \R$ such that $\rho > 0$ and
$|\rho|_{\alpha}< \kappa \inf{\rho}$. Lemma  \ref{invconeaux}, is still valid, that is, there exists $0<\lambda<1$ and $\kappa>0$ such that
\begin{enumerate}
\item For all $\gm \in \F_{loc}^s$ if $\rho,\rc \in \D(\gm,\lambda\kappa)$ then $\theta(\rho,\rc)\leq 2\log\l(\f{1+\lambda}{1-\lambda}\r)$. 
\item If $\rho \in \D(\gm,\kappa)$ then $\rho_j \in \D(\gm_j,\lambda\kappa)$,  $\forall j \in\{1, \ldots,p\}$. 
\item If $\rl,\rll \in \D(\gm,\kappa)$ then there exists  $\Lambda_1 =1-\l(\f{1-\lambda}{1+\lambda}\r)^2$ such that $\theta_j(\rl_j,\rll_j)\leq \Lambda_1\theta(\rl,\rll)$, $\forall j \in\{1, \ldots,p\}$;
\end{enumerate}
where $\theta_j$ and $\theta$ are, respectively, the projective metrics of $\D(\gmj,\kappa)$ and $\D(\gm,\kappa)$.

Once more, denote by $\Dnorm{\gamma}$ the set of densities $\rho \in \D(\gm,\kappa)$  such that $\intg{\rho}=1$. Given $b>0$, $c>0$ and $\kappa$ as in lemma $\ref{invconeaux}$, let $C[b,c,\alpha]$ be the cone of functions $\fhi \in E$ satisfying the conditions below for any $\gm \in \F^s$:

\begin{itemize}
\item{(A)} For all $\rho \in \D(\gm,\kappa)$:
$$ \intg{\fhi\rho} >0$$
\item {(B)} For all  $\rl, \rll \in \Dnorm{\gamma}$:
$$
\modulo{\intg{\fhi\rl}- \intg{\fhi \rll}} < b \theta\l(\rl,\rll\r) \infimo{\rho \in \Dnorm{\gamma}}{\intg{\fhi\rho}}
$$
\item {(C)} Given to leaves $\gm$ and $\gmt$ in the same Markov rectangle $R_i$:
$$
\modulo{\intg{\fhi} - \intgt{\fhi}} < c d(\gm,\gmt)^{\alpha}\infimo{\gm}{\intg{\fhi}} 
$$
\end{itemize}

\begin{remark} \label{folhas_no_mesmo_retangulo}
Given $\gm$ and $\gmt$ in the same rectangle, that is,  $\gm=W^s_{loc}(x) \cap R_i$, $\gmt=W^s_{loc}(y) \cap R_i$ and $x,y \in int(R_i)$ then $W^s_{loc}(x) \cap f(R_j) \neq \emptyset$ if and only if $W^s_{loc}(y) \cap f(R_j) \neq \emptyset$. Therefore, there exists $p \in \N$ such that $\gm = \ds\bigcup_{j=1}^p f(\gm_j)$ and $\gmt = \ds\bigcup_{j=1}^p f(\gmc_j)$, where $\gm_j, \gmt_j \subset R_j$ and $p_\gm=p_{\gmt}=p$. 
\end{remark}

\begin{remark}\label{Distancia_Entre_Folhas} 
Let $\gm$ and $\gmt$ in the same Markov rectangle. Assume without loss of generality that $\gm_1$ and $\gmt_1$ are
in the same good rectangle and so there exists $0<\lambda_{uc}<1$ such that $d(\gm_1,\gmt_1) \leq \lambda_{uc}d(\gm,\gmt)$ 
and in the other cases, $j \neq 1$, there exists $\tilde{L}\geq 1$ close to 1 such that $d(\gm_j,\gmt_j) \leq \tilde{L}d(\gm,\gmt)$. 
\end{remark}

The proof of the next proposition is entirely analogous to Prop. \ref{InvarianciaDoCone}:
\begin{proposition} \label{InvarianciaDoConeAnosov}
There exists $0<\sigma<1$ such that $\L(C[b,c,\alpha]) \subset C[\sigma b,\sigma c,\alpha]$ for 
sufficiently big $b$ and $c$.
\end{proposition}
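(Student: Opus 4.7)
The plan is to mirror the proof of Proposition~\ref{InvarianciaDoCone} almost verbatim, treating the three cone conditions in turn. The starting point is identity (\ref{MudancaDeVariavelAnosov}), which yields
$$
\intg{\L(\fhi)\rho} = \sumjapg\intgj{\fhi\rj},\qquad \rj := \f{1}{p_\gm}\rho\circ f\cdot e^\phi,
$$
the only change from the first setting being that the index now runs over the pre-image pieces $\gm_j\subset R_j$ produced by the Markov partition. Condition (A) is immediate once Lemma~\ref{invconeaux}(1) is re-applied in this context: for $\rho\in\D(\gm,\kappa)$ one still obtains $\rj\in\D(\gm_j,\lambda\kappa)\subset\D(\gm_j,\kappa)$, so each summand $\intgj{\fhi\rj}$ is positive and hence $\intg{\L\fhi\rho}>0$.

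For condition (B) I would reproduce the decomposition (\ref{PrimeiraDesConB}) and control its two pieces exactly as before: the first one using the contraction $\theta_j(\rbj,\rbbj)\leq \Lambda_1\theta(\rl,\rll)$ from Lemma~\ref{invconeaux}(3) together with the fact that $\fhi$ satisfies (B) on each $\gm_j$, and the second one using the $\Dnorm{\gm}$-normalization to bound $\intgj{\rcj}/\infimo{\rho\in\Dnorm{\gm}}{\intgj{\rj}}$ by $(1+\kappa\diam(M)^\alpha)^2$, as in inequality (\ref{DesDasDensidades}). Taking $\lambda$ small (and $\kappa$ subject to $\kappa\diam(M)^\alpha<\lambda$) makes the coefficient of $b$ strictly less than $1$, and then choosing $b$ large enough absorbs the additive constant, producing some factor $\sigma_1<1$, exactly as in Proposition~\ref{InvarianciaDoCone}.

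The real work, and the main obstacle, is the invariance of condition (C), because here (C) only compares leaves sitting in a common Markov rectangle. This is where Remarks~\ref{folhas_no_mesmo_retangulo} and \ref{Distancia_Entre_Folhas} become decisive. Fix $\gm,\gmt\subset R_i$. Remark~\ref{folhas_no_mesmo_retangulo} guarantees that $p_\gm=p_{\gmt}=p$ and that the pre-image pieces $\gm_j,\gmt_j$ lie in a common rectangle $R_j$, so that hypothesis (C) for $\fhi$ may legitimately be applied to the pair $(\gm_j,\gmt_j)$ inside $R_j$. Combining
$$
\l|\intg{\L\fhi} - \intgt{\L\fhi}\r| \leq \f{e^\phi}{p}\sum_{j=1}^p\l|\intgj{\fhi} - \intgtj{\fhi}\r|
$$
with the bound $\infimo{\gm}{\intg{\L\fhi}}\geq e^\phi\infimo{\gm}{\intg{\fhi}}$ (which follows from $\L(1)=e^\phi$), applying (C) inside each $R_j$, and inserting the estimates of Remark~\ref{Distancia_Entre_Folhas}, namely $d(\gm_1,\gmt_1)\leq \lambda_{uc}d(\gm,\gmt)$ and $d(\gm_j,\gmt_j)\leq \tilde L d(\gm,\gmt)$ for $j\neq 1$, yields
$$
\l|\intg{\L\fhi} - \intgt{\L\fhi}\r| \leq \f{\lambda_{uc}^\alpha + (p-1)\tilde L^\alpha}{p}\,c\,d(\gm,\gmt)^\alpha\,\infimo{\gm}{\intg{\L\fhi}}.
$$
As in the first setting, one may need a harmless intermediate correction of the constant $c$ by a factor $(1+(\tilde L-1)^\alpha)$, in the spirit of Lemma~3.5 of \cite{CV13}, to compensate for pre-image pairs that are slightly farther apart. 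The delicate point is to arrange the prefactor $\f{\lambda_{uc}^\alpha+(p-1)\tilde L^\alpha}{p}$ to be strictly less than $1$: this is precisely what the hypotheses deliver. The existence of at least one good rectangle provides $\lambda_{uc}<1$, and the assumption that in bad rectangles $\|Df|_{E^{uc}}\|^{-1}\leq L$ with $L$ close to $1$ (depending on $\zeta$ and the combinatorics of the partition) forces $\tilde L$ close enough to $1$ to guarantee $(p-1)\tilde L^\alpha < p - \lambda_{uc}^\alpha$, i.e.\ some $\sigma_2<1$. Setting $\sigma=\max\{\sigma_1,\sigma_2\}$ completes the proof.
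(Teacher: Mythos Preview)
Your proposal is correct and follows exactly the route the paper intends: the paper's own proof is merely the one-line statement that it is ``entirely analogous to Prop.~\ref{InvarianciaDoCone}'', and you have filled in precisely those analogies, using Remarks~\ref{folhas_no_mesmo_retangulo} and~\ref{Distancia_Entre_Folhas} at the places where the Markov-rectangle structure replaces the global semiconjugacy of the first setting. The only point worth making explicit is that the assumption ``every $\gm$ has a pre-image $\gm_1$ in a good rectangle'' (needed for the $\lambda_{uc}$-term) is guaranteed by the standing mixing hypothesis $f(R_i)\cap R_j\neq\emptyset$ for all $i,j$, which the paper arranges by passing to an iterate if necessary.
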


Let $\Theta$ denote the projective metrics of $C\l[b,c,\alpha\r]$. 
Now we occupy ourselves with the important
\begin{proposition} \label{DiametroFinitoAnosov}
For all sufficiently big $b>0$, $c>0$ and $\alpha \in (0,1]$ the  diameter of $\L(C[b,c,\alpha])$ 
is finite, that is, there exists $\Delta:=\sup\l\{\Theta\l(\L\fhi,\L\psi\r); \fhi,\psi \in C[b,c,\alpha]\r\}<\infty$.
\end{proposition}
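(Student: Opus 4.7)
My plan is to adapt the proof of Proposition \ref{DiametroFinito} to the present setting. The only structural difference is that condition (C) now applies only to pairs of leaves lying in a common Markov rectangle, and this is precisely where the Markov mixing assumption $f(R_i)\cap R_j \neq \emptyset$ for all $i,j$ will enter.

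First I would repeat the explicit computation of $\alpha_C$ and $\beta_C$ from Section 8 in terms of the three conditions (A), (B), (C) defining $C[b,c,\alpha]$, and introduce the auxiliary projective metric $\Theta_+$ associated with the cone defined by condition (A) alone. Since $\L\fhi,\L\psi$ lie in the strictly smaller cone $C[\sigma b,\sigma c,\alpha]$ by Proposition \ref{InvarianciaDoConeAnosov}, the ``correction factors'' coming from (B) and (C) in the definitions of $\alpha_C(\L\fhi,\L\psi)$ and $\beta_C(\L\fhi,\L\psi)$ are pinched inside $\l[(1-\sigma)/(1+\sigma),\,(1+\sigma)/(1-\sigma)\r]$, so a direct manipulation of the formulas gives
$$
\Theta(\L\fhi,\L\psi) \leq \Theta_+(\L\fhi,\L\psi) + 2\log\f{1+\sigma}{1-\sigma}.
$$
By the triangle inequality it then suffices to show that $\Theta_+(\L\fhi,1)$ is uniformly bounded for $\fhi \in C[b,c,\alpha]$, which reduces to bounding $\intgc{\L\fhi\,\rc}/\intg{\L\fhi\,\rho}$ uniformly over all $\gm,\gmc \in \F^s_{loc}$ and $\rho \in \Dnorm{\gm}$, $\rc \in \Dnorm{\gmc}$.

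Using $\intg{\L\fhi\,\rho} = \sumjapg \intgj{\fhi\,\rj}$ on both numerator and denominator, the problem further reduces to estimating the ratio $\intgcj{\fhi\,\rcj}/\intgj{\fhi\,\rj}$ uniformly in $j$. This is where the mixing assumption is crucial: both $\gm$ and $\gmc$ admit exactly one preimage piece inside every rectangle $R_j$, so the preimages can be labeled consistently and the pairing $\gm_j \leftrightarrow \gmcj$ places both in the same rectangle $R_j$ (in particular $p_\gm = p_{\gmc}$, as in Remark \ref{folhas_no_mesmo_retangulo}). With this pairing I would split the ratio into three factors,
$$
\f{\intgcj{\fhi\,\rcj}}{\intgj{\fhi\,\rj}} = \f{\intgcj{\fhi\,\rcj}}{\intgcj{\fhi}}\cdot\f{\intgcj{\fhi}}{\intgj{\fhi}}\cdot\f{\intgj{\fhi}}{\intgj{\fhi\,\rj}},
$$
and bound each one separately. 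The first and third factors are controlled by condition (B) together with Lemma \ref{invconeaux}(2): since $\rj,\rcj \in \D(\gm_j,\lambda\kappa) \subset \D(\gm_j,\kappa)$, their $\theta_j$-distance to the constant density is at most $2\log((1+\lambda)/(1-\lambda))$, producing a bound of the form $1 + b\cdot 2\log((1+\lambda)/(1-\lambda))$. The middle factor is controlled by condition (C), which applies precisely because $\gm_j,\gmcj$ sit in the same rectangle $R_j$, yielding $\intgcj{\fhi}/\intgj{\fhi} \leq 1 + c\cdot\diam(R_j)^\alpha$. Collecting these estimates and absorbing the remaining dependence on $\rho,\rc$ via the explicit formula $\rj = (1/p_\gm)\rho\circ f\cdot e^\phi$ together with the normalization bound (\ref{DesDasDensidades}) gives a uniform upper bound independent of $j$, $\fhi$, $\rho$, $\rc$, $\gm$, $\gmc$.

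The main obstacle I anticipate is this combinatorial preimage-pairing step, which is the only point at which the argument genuinely departs from Proposition \ref{DiametroFinito}; it depends essentially on the Markov mixing hypothesis (or on passing to a large iterate of $f$, as already noted in the paper). Once the pairing is justified, the remaining analytic work is formally identical to the first-setting proof, and produces an explicit finite diameter $\Delta$ depending only on $b$, $c$, $\kappa$, $\lambda$, $\sigma$, and the diameters of the Markov rectangles.
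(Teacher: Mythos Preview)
Your overall strategy---reducing to $\Theta_+$, then bounding the ratio $\intgc{\L\fhi\,\rc}/\intg{\L\fhi\,\rho}$ by splitting into three factors and controlling them via (B), (C), and Lemma~\ref{invconeaux}---matches the paper. However, the combinatorial pairing step, which you correctly identify as the crux, has a genuine gap.

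You assert that ``both $\gm$ and $\gmc$ admit exactly one preimage piece inside every rectangle $R_j$'' and hence $p_\gm = p_{\gmc}$. Neither statement is justified. The mixing hypothesis $f(R_i)\cap R_j\neq\emptyset$ guarantees at least one preimage in each rectangle, not exactly one; and Remark~\ref{folhas_no_mesmo_retangulo}, which you invoke, applies only to two leaves in the \emph{same} rectangle, whereas for the $\Theta_+$ bound $\gm$ and $\gmc$ range over all of $\F^s_{loc}$ and typically lie in different rectangles. In general $\gm\subset R_i$ and $\gmc\subset R_{i'}$ may have different numbers $r_k(\gm)\neq r_k(\gmc)$ of preimage leaves inside a given $R_k$, so the one-to-one pairing you propose simply does not exist.

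The paper confronts exactly this issue: it groups the preimages by the rectangle $R_k$ they fall into and bounds
\[
\frac{\sum_{l=1}^{r_k(\gmc)}\int_{\gmc_{k_l}}\fhi\,(\rc)_{k_l}\,d\mu_{\gmc_{k_l}}}
     {\sum_{l=1}^{r_k(\gm)}\int_{\gm_{k_l}}\fhi\,\rho_{k_l}\,d\mu_{\gm_{k_l}}}
\]
rectangle by rectangle. Within each $R_k$ the per-term ratio is controlled by your three-factor estimate (since the leaves now genuinely share a rectangle and (C) applies), but the mismatch in the number of terms is handled by a division-with-remainder argument: writing $r_k(\gmc)=q\,r_k(\gm)+r$, the ratio is at most $(q+1)\tilde C$ with $q\le p_{\max}$. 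This extra combinatorial layer, producing the final bound $(p_{\max}+1)\tilde C$, is missing from your outline and is not bypassed by the mixing assumption.
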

\begin{proof}

Denote by  $\Theta_+$ the projective metrics associated to  the cone defined by condition  (A).

Exactly as in Proposition \ref{DiametroFinito}, 
it follows from the expression of $\Theta$ that
$$
\Theta(\fhi,\psi) \leq \Theta_+(\fhi,\psi)+\log\l(\f{1+\sigma}{1-\sigma}\r)^2.
$$

In order to prove that the $\Theta_+$-diameter of $\L\l(C[b,c,\alpha]\r)$ is finite, we just need
to upper bound  $\Theta_+(\L(\fhi),1)$ in the cone $C[b,c,\alpha]$. By the definition of $\Theta_+$ 
this problem reduces to obtain an upper bound to 
\begin{equation}\label{perronemgamaegamacAnosov}
\f{\intgc{\L\fhi\rc}}{\intg{\L\fhi\rho}}=\f{\ds\sum_{j=1}^{p_{\gmc}}\intgcj{\fhi\rcj}}{\ds\sum_{j=1}^{p_{\gm}}\intgj{\fhi\rj}}
\end{equation}
for $\rho \in \Dnorm{\gm}$ and $\rc \in \Dnorm{\gmc}$. 
First, we will bound the expression
$$\f{\intgcj{\fhi\rcj}}{\intgj{\fhi\rj}}
=
\f{\intgcj{\fhi\rcj}}{\intgcj{\fhi}}
\f{\intgcj{\fhi}}{\intgj{\fhi}}
\f{\intgj{\fhi}}{\intgj{\fhi\rj}}$$
for $\gm_j$ and $\gmc_j$ in the same Markov rectangle.  This is the same kind of
calculations beginning with equation \ref{eqrep1} in  Prop. \ref{DiametroFinito}.
From that, we conclude that
$$
\f{\intgcj{\fhi\rcj}}{\intgcj{\fhi}}
\leq
\l(1+b\log\l(\f{1+\lambda}{1-\lambda}\r)\r)\intgcj{\rcj}\\
$$
and
$$
\f{\intgj{\fhi}}{\intgj{\fhi\rj}}
\leq
\f{\l(1+b\log\l(\f{1+\lambda}{1-\lambda}\r)\r)}{\intgj{\rj}}.
$$
Recall that $\rcj = \f{1}{p_{\gmc}}\rc\circ f$ and $\rj = \f{1}{p_{\gm}}\rho\circ f$. Since $\rho$ and $\rc$ are normalized densities, if follows that $\rcj \leq \f{1}{p_{\gmc}}(1+\kappa diam(M)^\alpha)$ and
 $\rj \geq \f{1}{p_{\gm}}(1+\kappa diam(M)^\alpha)^{-1}$. Therefore
$$
\f{\intgcj{\rcj}}{\intgj{\rj}} < \f{p_{\gm}}{p_{\gmc}}(1+\kappa diam(M)^\alpha)^2
$$
On the other hand $\holder{e^{\phi}} < \varepsilon \inf{e^{\phi}}$ and so $\sup{e^{\phi}}<(1+\varepsilon diam (M)^{\alpha})\inf{e^{\phi}}$ . 
So we obtain 
$$
\begin{array}{rcl}
\f{\intgcj{\rcj}}{\intgj{\rj}}  <  \f{p_{\gm}}{p_{\gmc}}(1+\kappa diam(M)^\alpha)^2 
& \leq & p_{max}(1+\kappa diam(M)^\alpha)^2 \\
\end{array}
$$%
Due to  condition (C), the following inequality also holds:
$$
\f{\intgcj{\fhi}}{\intgj{\fhi}} \leq 1+cd\l(\gmcj,\gmj\r)^\alpha \leq 1+c.diam(M)^\alpha
$$
Hence
$$
\f{\intgcj{\fhi\rcj}}{\intgj{\fhi\rj}} < p_{max}\l(1+b\log\l(\f{1+\lambda}{1-\lambda}\r)\r)^2(1+\max\{\kappa,c\} diam(M)^\alpha)^2.
$$
However, given a leaf $\gm$ there could exist more than one leaf $\gm_j$ in the same  Markov rectangle. 
So, we rewrite eq. (\ref{perronemgamaegamacAnosov}) in the following manner:
$$
\f{\intgc{\L\fhi\rc}}{\intg{\L\fhi\rho}}=\f{\ds\sum_{k=1}^{p}\ds\sum_{l=1}^{r_k(\gmc)}\int_{\gmc_{k_l}}\fhi(\rc)_{k_l}\dm_{\gmc_{k_l}}}
{\ds\sum_{k=1}^{p}\ds\sum_{l=1}^{r_k(\gm)}\int_{\gm_{k_l}}\fhi \rho_{k_l}\dm_{\gm_{k_l}}}
$$
such that for $R_k \in \cR$ we have $\gm_{k_l} \subset R_k$ for each $l \in \l\{1,\cdots,r_k(\gm)\r\}$ 
and $\gmc_{k_l} \subset R_k$ for each $l \in \l\{1,\cdots,r_k(\gmc)\r\}$. 
Moreover $\ds\sum_{k=1}^{p}r_k(\gm)=p_{\gm}$ and $\ds\sum_{k=1}^{p}r_k(\gmc)=p_{\gmc}$. Since $\gm_{k_l},\gmc_{k_l} \subset R_k$, by defining 
$$
\tilde{C}:=p_{max}\l(1+b\log\l(\f{1+\lambda}{1-\lambda}\r)\r)^2(1+\max\{\kappa,c\} diam(M)^\alpha)^2,
$$
we have that
$
\f{\ds\int_{\gmc_{k_l}}\fhi(\rc)_{k_l}\dm_{\gmc_{k_l}}}{\ds\int_{\gm_{k_l}}\fhi \rho_{k_l}\dm_{\gm_{k_l}}} \leq \tilde{C}.
$
Now, we are interested in finding an upper bound for
$
\f{\ds\sum_{l=1}^{r_k(\gmc)}\ds\int_{\gmc_{k_l}}\fhi(\rc)_{k_l}\dm_{\gmc_{k_l}}}{\ds\sum_{l=1}^{r_k(\gm)}\ds\int_{\gm_{k_l}}\fhi \rho_{k_l}\dm_{\gm_{k_l}}} 
$.
Suppose $r_k(\gmc) \leq r_k(\gm)$. Since the parcels $\ds\int_{\gm_{k_l}}\fhi \rho_{k_l}\dm_{\gm_{k_l}}$ are positive we can  assume $r_k(\gmc) = r_k(\gm)$ . Therefore, 
$
\f{\ds\sum_{l=1}^{r_k(\gmc)}\ds\int_{\gmc_{k_l}}\fhi(\rc)_{k_l}\dm_{\gmc_{k_l}}}{\ds\sum_{l=1}^{r_k(\gm)}\ds\int_{\gm_{k_l}}\fhi \rho_{k_l}\dm_{\gm_{k_l}}} \leq \tilde{C}
$.
Now, if  $r_k(\gmc) \geq r_k(\gm)$, there exist $q=q(\gm,\gmc),r\in \N$ such that $r_k(\gmc)=q r_k(\gm)+r$, for $0 \leq r<r_k(\gm)$ and $1 \leq q $. So, we write  $l(s)$ for $(s-1)r_k(\gm)+l$ and obtain
$$
\f{\ds\sum_{l=1}^{r_k(\gmc)}\ds\int_{\gmc_{k_l}}\fhi(\rc)_{k_l}\dm_{\gmc_{k_l}}}{\ds\sum_{l=1}^{r_k(\gm)}\ds\int_{\gm_{k_l}}\fhi \rho_{k_l}\dm_{\gm_{k_l}}} =
\ds\sum_{s=1}^{q}\l(\f{\ds\sum_{l=1}^{r_k(\gm)}\ds\int_{\gmc_{k_{l(s)}}}\fhi(\rc)_{k_{l(s)}}\dm_{\gmc_{k_{l(s)}}}}{\ds\sum_{l=1}^{r_k(\gm)}\ds\int_{\gm_{k_l}}\fhi \rho_{k_l}\dm_{\gm_{k_l}}}\r)+
\f{\ds\sum_{l=qr_k(\gm)+1}^{r_k(\gmc)}\ds\int_{\gmc_{k_l}}\fhi(\rc)_{k_l}\dm_{\gmc_{k_l}}}{\ds\sum_{l=1}^{r_k(\gm)}\ds\int_{\gm_{k_l}}\fhi \rho_{k_l}\dm_{\gm_{k_l}}}
$$
Each term in the sum in $s\in \l\{1,\cdots,q\r\}$, satisfies the earlier case therefore the first 
sum can be bounded by $q\tilde{C}$. As $r_k(\gmc)-qr_k(\gm) = r < r_k(\gm)$, the last   parcel also satisfies the first case. In  short, 
$$
\f{\ds\sum_{l=1}^{r_k(\gmc)}\ds\int_{\gmc_{k_l}}\fhi(\rc)_{k_l}\dm_{\gmc_{k_l}}}{\ds\sum_{l=1}^{r_k(\gm)}\ds\int_{\gm_{k_l}}\fhi \rho_{k_l}\dm_{\gm_{k_l}}} \leq (q+1)\tilde{C}.
$$
Note that $q=q(\gm,\gmc) \leq p_{max}$, for any  $\gm$ and $\gmc$. 
Therefore  (\ref{perronemgamaegamacAnosov}) is bounded by  $(p_{max}+1)\tilde{C}$ and  
$$
\Delta:=\sup\l\{\Theta\l(\L\fhi,\L\psi\r); \fhi,\psi \in C(b,c,\alpha)\r\}\leq 2(p_{max}+1)\tilde{C}.
$$
\end{proof}

\subsection{Statistical properties}

Now we construct  a measure in $\Lambda$ which is a good candidate for maximal entropy measure. 
For such probability measure, we prove the  exponential decay of  correlations for H\"older 
continuous  observables and the Central Limit Theorem. 

Up to now,  we have obtained an $\L$-invariant cone $C[b,c,\alpha]$ such that $\L(C[b,c,\alpha])$ has
finite diameter. This guarantees, in some sense the convergence of functions in  
the cone $C[\sigma b,\sigma c,\alpha]$. 

Let $\hat{\eta}$ be any probability measure in the quotient space of $\Lambda$ 
given by $\F^s_{loc}$, and define the measure
\begin{equation}
\eta(\fhi):=\ds\int\l(\intg{\fhi}\r)d\hat{\eta}(\gm)
\label{eqprod}
\end{equation}
For $\fhi \in C[b,c,\alpha]$ let 
$$
\fhi_n=\f{\L^n(\fhi)}{\ds\int{\L^n(1)d\eta}}= \L^n(\fhi),
$$
As $\Theta_+(\fhi_m,\fhi_n) \leq \Theta(\fhi_m,\fhi_n)$ it follows that $\fhi_n$ is a $\Theta_+$-Cauchy 
sequence. 
Moreover, observe also that $\Theta_+(\fhi_n, 1)= \Theta_+(\fhi_n, \L^n(1)) \to 0$.
In particular 
\begin{equation}
\frac{\intg {\fhi_n} }{\int_{\hat \gamma} \fhi_n d\mu_{\hat \gamma}} \to 1, \forall \gamma, \hat \gamma \in  \cF^{s}_{loc}, \text{ uniformly } \label{eqmassa}
\end{equation}
Note that $\intg \fhi$, $\gamma \in \cF^{s}_{loc}$ is bounded from below far from zero, 
because $\fhi \in C([b,c,\alpha])$, say, by a constant $q> 0$. Since $\L$ is positive,
this implies that
$\intg {\fhi_n}  \geq \intg {\L^n(q)} = q$.
We have somewhat more: Suppose that $\mu$ is an $f-$invariant probability satisfying a product measure 
property just as $\eta$ in equation  \ref{eqprod}. Then, for every $n$ there exist 
$\gamma_n$, $\hat \gamma_n$  such that $\int \fhi_n d\gamma_n \geq \int_M \fhi_n d\mu= \int_M \fhi d\mu \geq \int \fhi_n d\hat\gamma_n$. Due to equation \ref{eqmassa},  this means
(even if $\eta$ is not  invariant) 
 the following limit
$$
\limn \ds\int \fhi_n d\eta= \mu(\fhi).
$$
holds.

Let us see that there exists an $f$-invariant probabilty measure  $\mu= \mg \times \hat{\mu}$, which will coincide with the last limit, for any probability $\eta$ satisfying eq.(\ref{eqprod}).
Consider in $M$ the following : $x \sim y$ if, and only if,  $x$ and $y$ belongs in the same leaf 
$\gm \in \F^s_{loc}$.  Let $g:\F^s_{loc} \to \F^s_{loc}$ be the {\em quotient map}  \label{defquot} of $f$ defined by $g(\widetilde{x})=\widetilde{f(x)}$. 
 Take the sigma-algebra generated by the Markov rectangles and its refinements by $f^{-1}$.  It may be that such $\sigma-$algebra
 does not coincide with the Borel $\sigma-$ algebra in $M/\sim$. However, just as in \cite{Cas02}, 
 most of the cylinders in the construction of $\cR \vee f^{-1}(\cR) \dots $ have arbitrarily small $M/\sim$-diameter.
 We give the following construction for the measure $\hat \mu$. Each rectangle has the same measure, 
 and the sum of it is one. Take a rectangle, say, $R_1 \in \cR$. Let $\cR_1:= \{R_{1, 1}, \dots R_{1, n_1}\}$ be the set of 
 connected intersections of $f^{-1}$ and the several elements of $\cR$. Then we set $\hat\mu(R_{1, 1})= \dots = \hat\mu(R_{1, n_1}):= 1/n_1$.
 We continue inductively, taking the pre-images of the elements in $\cR_1$, doing the mass distribution.
 Note that due to the mild mixing property of the Markov Partition, there is $0< c< 1$ such that the cylinders in each phase of the construction have its $\hat\mu-$measure 
 multiplied by a fraction less than $c$ if compared with the cylinders, in the phase before. In particular, 
 if we take a minimal element $S$ in $M/\sim$ with non-zero diameter, its $\hat\mu-$measure can be approached by 
 a cylinder with arbitrarily small $\hat\mu-$measure. This means that $\hat\mu(S)= 0$, and so, $\vee_{n= 0}^{\infty} f^{-n}(\cR)$
 is the Borelian $\sigma-$algebra, modulo null-$\hat\mu$ sets. Note that (by arguments as in \cite{Cas02} ) 
 there are leaves that can be written as enumerable intersections of  cylinders.
 We saturate positively $\hat\mu$ to extend it to a measure in $M$. A simple calculation leads to the fact that 
 $$
 \mu=  \mu_\ga \times \hat \mu,
 $$
 which is invariant by construction.

If one has uniqueness of the maximal entropy measure in the quotient map $g$ defined in the last paragraph, then, 
 one can proceed as in section \ref{secuniqueness} to conclude that the measure that we just
 constructed is the maximal entropy measure for the system in the second setting. 
 In the case when the central-unstable direction is one dimensional, such existence and uniqueness of the 
 maximal entropy measure for the  quotient map is guaranteed by the work of Liverani-Saussol-Vaienti \cite{LSV98}. This implies Corollary \ref{CorLSV}.

Now we proceed with the proof of the exponential decay of correlations for $\mu$.

Again, we have:
\begin{proposition}
If $f$ is invertible then $\L^*(\mu)=\mu$ if and only if $\mu$ is $f$-invariant.
\end{proposition}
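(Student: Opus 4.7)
The plan is to mirror the proof of the analogous proposition given earlier (in Section~9) for the first setting, since the statement and argument are formally identical: it only uses that $f$ is invertible and that, for the constant potential (suitably normalized), the transfer operator acts on continuous functions simply as the Koopman operator of $f^{-1}$. The setting here is different (second setting, with the measure $\mu=\mu_\gamma\times\hat\mu$ constructed via the Markov partition), but the abstract duality between $\L$ and $\L^*$ on $C^0(\Lambda)$ does not depend on these structural details.

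First I would fix an arbitrary $\fhi\in C^0(\Lambda)$. Since $\phi$ is constant and $f$ is invertible, after absorbing the constant factor $e^\phi$ into the normalization one has $\L(\fhi)(y)=\fhi(f^{-1}(y))$, i.e.\ $\L$ is the Koopman operator associated to $f^{-1}$. Using the defining property of the adjoint,
$$
\int \fhi\, d\L^*\mu \;=\; \int \L(\fhi)\, d\mu \;=\; \int \fhi\circ f^{-1}\, d\mu.
$$
If $\L^*\mu=\mu$, this identity gives $\int \fhi\circ f^{-1}\,d\mu=\int \fhi\,d\mu$ for every continuous $\fhi$, so $\mu$ is $f^{-1}$-invariant; since $f$ is a bijection this is equivalent to $f$-invariance. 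Conversely, if $\mu$ is $f$-invariant then the same chain of equalities shows $\int \fhi\,d\L^*\mu=\int \fhi\,d\mu$ for every $\fhi\in C^0(\Lambda)$, and hence $\L^*\mu=\mu$ by the Riesz representation theorem (both being Borel probability measures on the compact set $\Lambda$).

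There is no real obstacle here; the only point that needs a brief sanity check is that the normalization used to define $\L$ on this second setting is consistent with the identification $\L(\fhi)=\fhi\circ f^{-1}$, so that the constant $e^\phi$ factor does not obstruct the equality $\L^*\mu=\mu\iff\mu$ is $f$-invariant. After that observation, the proof is a one-line application of the duality between $\L$ and the Koopman operator, exactly as in the earlier proposition in Section~9.
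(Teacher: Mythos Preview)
Your argument is correct and essentially identical to the paper's own proof: both directions follow from the duality identity $\int \fhi\, d\L^*\mu = \int \L(\fhi)\, d\mu = \int \fhi\circ f^{-1}\, d\mu$, using that for a constant (normalized) potential the transfer operator reduces to composition with $f^{-1}$. The only difference is that you make explicit the invocation of Riesz representation and the equivalence of $f$-invariance and $f^{-1}$-invariance, which the paper leaves implicit.
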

Therefore,  $\mu$ is an $f$-invariant probability. 
\begin{equation}\label{dualidadeAnosov}
\int{\l(\fhi \circ f^n\r)\psi}\dm=\int{\fhi\L^n(\psi)}\dm.
\end{equation}
We also have that any  H\"older function can be written as a sum of functions in the cone. That is,

\begin{proposition}
For all $\fhi \in C^\alpha\l(M\r)$ there exists $K(\fhi)>0$ such that $\fhi+K(\fhi) \in C[b,c,\alpha]$.
\end{proposition}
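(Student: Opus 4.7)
The plan is to mimic Lemma \ref{lesoma} from the first setting, adapted to account for the fact that condition (C) of the new cone $C[b,c,\alpha]$ only compares leaves in the same Markov rectangle. As before, the idea is to pick three separate constants $K_1, K_2, K_3$ securing conditions (A), (B), (C) respectively, and take their maximum.

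First, I would handle condition (A). Since $\fhi \in C^\alpha(M)$ and $M$ is compact, $\fhi$ is bounded below, so one can take $K_1 = -\inf \fhi + 1$ so that $\fhi + K_1 > 0$ pointwise; then $\intg{(\fhi+K_1)\rho} > 0$ for every $\gm$ and every $\rho \in \D(\gm,\kappa)$.

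Next, for condition (B), the key observation is that for $\rl, \rll \in \Dnorm{\gamma}$ one has $\rl/\rll \leq e^{\theta(\rl,\rll)}$, and $\sup \rll \leq 1 + \kappa\,\diam(M)^\alpha$ because $\rll$ is normalized with $|\rll|_\alpha \leq \kappa \inf \rll$. Writing $\fhi + B$ with $\sup(\fhi+B) = 1$ and estimating as in Lemma \ref{lesoma},
\[
\bigl|\intg{\fhi\rl} - \intg{\fhi\rll}\bigr| \;\leq\; \bigl(e^{\theta(\rl,\rll)}-1\bigr) \sup\rll,
\]
a case split on whether $\theta(\rl,\rll) < 1$ or $\geq 1$ yields a uniform bound $M_B := 2(1+\kappa\diam(M)^\alpha)$ on $|\intg{\fhi\rl} - \intg{\fhi\rll}|/\theta(\rl,\rll)$ independent of $\gm$. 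Since $\inf_{\rho \in \Dnorm{\gm}} \intg{(\fhi+K)\rho} \geq K + \inf \fhi$ for every $K$, choosing $K_2$ so that $b(K_2 + \inf \fhi) > M_B$ makes (B) hold for $\fhi + K_2$.

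The main work is condition (C), where I would exploit the product structure inside each Markov rectangle. Given $\gm$ and $\gmt$ in the same $R_i$, Remark \ref{folhas_no_mesmo_retangulo} guarantees a well-defined holonomy $\pi: \gmt \to \gm$ along center-unstable plaques satisfying $\intg{\fhi} = \intgt{\fhi \circ \pi}$ (up to the agreement of the mass-distribution measures under holonomy, which holds by the Markov property: both $\mg$ and $\mgt$ are built from the same cylinder combinatorics). For $x \in \gmt$, $|\fhi(x) - \fhi(\pi(x))| \leq |\fhi|_\alpha\, d(x,\pi(x))^\alpha \leq |\fhi|_\alpha\, d(\gm,\gmt)^\alpha$, so
\[
\sup_{\gm,\gmt \subset R_i}\, \frac{\bigl|\intg{\fhi} - \intgt{\fhi}\bigr|}{d(\gm,\gmt)^\alpha} \;\leq\; |\fhi|_\alpha \;<\; \infty.
\]
Since adding $K$ does not change this numerator but makes the infimum on the right side of (C) equal to $\inf_\gm \intg{\fhi} + K$, choosing $K_3$ so that $c(K_3 + \inf\fhi) > |\fhi|_\alpha$ forces (C) for $\fhi + K_3$.

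Finally, I set $K(\fhi) = \max\{K_1, K_2, K_3\}$; since each condition (A), (B), (C) is stable under replacing $K_i$ by a larger constant (positivity is preserved, and both right-hand sides of (B) and (C) only grow), $\fhi + K(\fhi) \in C[b,c,\alpha]$. The one delicate point — the main obstacle — is verifying the holonomy identity $\intg{\fhi} = \intgt{\fhi\circ\pi}$ with respect to the measures $\mg, \mgt$ constructed by the cylinder recipe; this follows because the Markov property ensures $p_\gm = p_\gmt$ and, inductively, the cylinders $f^n(\gm_{i_1\cdots i_n})$ and $f^n(\gmt_{i_1\cdots i_n})$ carry equal mass $1/(p_{i_0}\cdots p_{i_{n-1}})$, so pushforward under $\pi$ preserves the measure on each cylinder of the generating sequence.
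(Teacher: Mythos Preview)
Your proposal is correct and follows exactly the route the paper intends: the paper's own proof is literally ``essentially the same as Lemma~\ref{lesoma}'', and you have reproduced that argument with the appropriate adaptation (restricting condition~(C) to leaves in the same Markov rectangle). Your explicit verification that the holonomy $\pi$ preserves the cylinder-constructed measures $\mu_\gamma$---via the Markov property forcing identical combinatorics $p_{i_0}\cdots p_{i_{n-1}}$ on matching cylinders---fills in a point the paper leaves implicit.
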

The proof of the proposition above is essentially the same of that of  Lemma \ref{lesoma}.

Now, we are able to prove:
\begin{theorem}[Exponential decay of correlations for systems in the second setting]
The measure $\mu$ exhibits exponential decay of  correlations for H\"older continuous observables.
\end{theorem}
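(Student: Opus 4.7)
The plan is to imitate closely the proof of Theorem B in the first setting, since all the essential cone-theoretic ingredients have already been established in the second setting: strict invariance of the main cone $C[b,c,\alpha]$ under $\L$ (Proposition \ref{InvarianciaDoConeAnosov}), finite $\Theta$-diameter of the image $\L(C[b,c,\alpha])$ (Proposition \ref{DiametroFinitoAnosov}), the $f$-invariance of $\mu$, the duality \eqref{dualidadeAnosov}, the product-measure representation $\mu=\mu_\gamma\times\hat\mu$, and the possibility of putting any H\"older function into the cone by adding a sufficiently large constant.

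First I would reduce, via the duality \eqref{dualidadeAnosov}, to estimating
$
\big|\int \varphi\,\L^n(\psi)\,d\mu - \int\varphi\,d\mu\int\psi\,d\mu\big|
$
and treat the model case $\psi\in C[b,c,\alpha]$ with $\int\psi\,d\mu=1$, and $\varphi|_\gamma\in\D(\gamma,\kappa)$ for every $\gamma\in\F^s_{loc}$. Combining Propositions \ref{InvarianciaDoConeAnosov}, \ref{DiametroFinitoAnosov} with Birkhoff's Theorem \ref{t.Birkhoff} gives some $0<\tau<1$ such that $\Theta(\L^n\psi,1)\le \Delta\,\tau^{n-1}$ for all $n\ge 1$. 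Using condition (A) and the same $\alpha_+/\beta_+$ bookkeeping as in the first setting, one obtains
$
\frac{\int_\gamma \varphi\,\L^n(\psi)\,d\mu_\gamma}{\int_\gamma \varphi\,d\mu_\gamma} \le e^{\Theta(\L^n\psi,1)}.
$
Integrating against $\hat\mu$ and using the product structure $\mu=\mu_\gamma\times\hat\mu$ then yields
$
\frac{\int \varphi\,\L^n(\psi)\,d\mu}{\int\varphi\,d\mu}\le e^{\Delta\tau^{n-1}},
$
and the elementary estimate $e^{\Delta\tau^{n-1}}-1\le \widetilde\Delta\,\tau^n$ closes this case.

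For the lower bound on $\alpha_+(\L^n\psi,1)$ I use the normalization $\int\psi\,d\mu=1$: by the product structure there must exist some leaf $\hat\gamma$ with $\int_{\hat\gamma}\L^n\psi\,d\mu_{\hat\gamma}\le 1$, so that $\alpha_+(\L^n\psi,1)\le 1$ and the ratio $\beta_+/\alpha_+$ is controlled by $e^{\Theta(\L^n\psi,1)}$. To pass from the model case to arbitrary H\"older $\varphi$ and $\psi$, I use the analogue of Lemma \ref{lesoma} (already stated above for the second setting) and write $\psi=(\psi+K(\psi))-K(\psi)$, noting that $\int \varphi\,\L^n(K(\psi))\,d\mu=K(\psi)\int\varphi\,d\mu$ cancels in the correlation, and similarly  $\varphi=(\varphi+K(\varphi))-K(\varphi)$ so that $\varphi+K(\varphi)$ restricted to any leaf lies in $\D(\gamma,\kappa)$ after adding an additional positive constant $B$ and then letting $B\to 0$. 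Collecting constants produces the desired bound
$
\big|\int(\varphi\circ f^n)\psi\,d\mu-\int\varphi\,d\mu\int\psi\,d\mu\big|\le K(\varphi,\psi)\,\tau^n.
$

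The main obstacle, compared to the first setting, is that condition (C) in the main cone of the second setting only controls variations of leaf-averages between stable leaves lying in the \emph{same} Markov rectangle, so one cannot directly compare $\int_\gamma\L^n\psi\,d\mu_\gamma$ across different rectangles. I would overcome this by exploiting the mild mixing property of the Markov partition exactly as used in Proposition \ref{DiametroFinitoAnosov}: after one iterate any two rectangles communicate, hence the $\Theta_+$-comparison needed in the derivation of $e^{\Theta(\L^n\psi,1)}\ge \beta_+/\alpha_+$ reduces to the in-rectangle comparison already built into condition (C), and the resulting finite diameter $\Delta$ depends only on the combinatorics of $\cR$. With this subtlety handled, the proof proceeds verbatim as in the first setting.
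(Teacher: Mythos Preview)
Your proposal is correct and matches the paper's own proof, which simply states that the argument is ``exactly the same'' as Theorem~B, using Propositions~\ref{InvarianciaDoConeAnosov} and~\ref{DiametroFinitoAnosov} in place of their first-setting counterparts. Your additional remark about condition~(C) only comparing leaves within a single Markov rectangle is accurate but already absorbed into Proposition~\ref{DiametroFinitoAnosov}, so no extra work is needed beyond invoking that result.
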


\begin{proof}
The proof is exactly the same of Theorem \ref{teoB}, using the corresponding 
propositions \ref{InvarianciaDoConeAnosov} and \ref{DiametroFinitoAnosov} 
that we have just proved for systems in the second setting.

\end{proof}

The Central Limit Theorem follows again from the exponential decay of correlations and Gordin's Theorem (Th. \ref{TeoremaDeGordin}).

\textbf{Acknowledgements.} The authors are grateful to C. Liverani and P. Varandas for very fruitful conversations 
on thermodynamical formalism and to the anonimous 
referees for the careful reading of the manuscript and suggestions. 
This work was partially supported by CNPq-Brazil and FAPESB.

%
%
\bibliographystyle{alpha}

\end{document}